\newtheorem{lemma}{Lemma}
\numberwithin{lemma}{section}
\newtheorem{theorem}[lemma]{Theorem}
\newtheorem*{theorem*}{Theorem}
\newtheorem*{corollary*}{Corollary}
\newtheorem{proposition}[lemma]{Proposition}
\theoremstyle{definition}
\newtheorem{definition}[lemma]{Definition}
\newtheorem{note}[lemma]{Note}
\newtheorem{example}[lemma]{Example}
\newtheorem{remark}[lemma]{Remark}
\newtheorem*{acknowledgment}{Acknowledgement}
\newtheorem*{remark*}{Remark}
\renewcommand{\H}{\mathcal{H}}
\newcommand{\C}{\mathcal{C}}
\newcommand{\D}{\mathcal{D}}
\newcommand{\bbC}{\mathbb{C}}
\newcommand{\bbF}{\mathbb{F}}
\newcommand{\A}{\mathcal{A}}
\renewcommand{\S}{\mathfrak{S}}
\renewcommand{\t}{\mathbbm{1}}
\DeclareMathOperator{\tor}{Tor}
\DeclareMathOperator{\ext}{Ext}
\let\hom\relax
\DeclareMathOperator{\hom}{Hom}
\DeclareMathOperator{\colim}{colim}
\newcommand{\IE}{{}^{I}E}
\newcommand{\IIE}{{}^{II}E}
\newcommand{\bfs}{\mathbf{s}}
\title{Homological stability for Iwahori-Hecke algebras}
\author{Richard Hepworth}
\address{Institute of Mathematics\\
University of Aberdeen
}
\email{r.hepworth@abdn.ac.uk}
\subjclass[2010]{
	20J06, 
	16E40 
	(primary),
	20F36 
	(secondary)
}
\keywords{Homological stability, Iwahori-Hecke algebras, injective words}
\begin{document}

\begin{abstract}
	We show that the Iwahori-Hecke algebras $\H_n$
	of type $A_{n-1}$ satisfy homological stability,
	where homology is interpreted as an appropriate Tor group.
	Our result precisely recovers Nakaoka's homological stability result
	for the symmetric groups in the case that the 
	defining parameter is equal to $1$.
	We believe that this paper, and our joint work with Boyd
	on Temperley-Lieb algebras, are the first time that the techniques
	of homological stability have been applied to algebras that 
	are not group algebras.
\end{abstract}

\maketitle

\section{Introduction}

\subsection{Homological stability}

A family of discrete groups 
\[
	G_0\hookrightarrow G_1\hookrightarrow G_2\hookrightarrow\cdots
\]
satisfies \emph{homological stability} if the maps
\[
	H_d(G_{n-1})\longrightarrow H_d(G_n)
\]
are isomorphisms when $n$ is sufficiently large compared to $d$.  
Homological stability can similarly be formulated for sequences
of topological groups, and for families of spaces that are not necessarily
classifying spaces of groups.
Examples of families for which homological stability holds
include symmetric groups~\cite{Nakaoka},
general linear groups~\cite{Quillen, Charney, vanderKallen},
mapping class groups of surfaces and 
3-manifolds~\cite{Harer, RandalWilliamsMCG, WahlMCG, HatcherWahl},
diffeomorphism groups of highly connected 
manifolds~\cite{GalatiusRandalWilliams}, automorphism groups of 
free groups~\cite{HatcherVogtmannStability,HatcherVogtmannRational},
families of Coxeter groups~\cite{HepworthCoxeter} and Artin monoids~\cite{Boyd},
configuration spaces of manifolds~\cite{Church}, \cite{RandalWilliamsConfig},
and a great many others besides.

The homology $H_\ast(G;R)$ 
of a discrete group $G$ with coefficients in a ring $R$ 
can be written as the $\tor$ group
\[
	\tor^{RG}_\ast(\t,\t)
\]
over the group algebra $RG$, where $\t$ denotes the trivial representation.
This formulation shows that the homology of a group depends only on the 
group algebra.
We can therefore say that a family of algebras
\[
	\A_0\to \A_1\to \A_2\to\cdots
\]
equipped with a consistent choice of `trivial representation' $\t$
satisfies \emph{homological stability} if the maps
\[
	\tor^{\A_{n-1}}_d(\t,\t)
	\longrightarrow	
	\tor^{\A_{n}}_d(\t,\t)
\]
are isomorphisms when $n$ is sufficiently large compared to $d$.
Here the algebras need not be group algebras, and the only requirement
on $\t$ is that it is a module for each $\A_n$
and that the module structures are compatible with the  
maps $\A_{n-1}\to \A_n$.

The purpose of this paper is to demonstrate that homological stability
holds in this sense for Iwahori-Hecke algebras of type $A_{n-1}$, 
and moreover that it can be proved by adapting the suite of techniques
used to study families of groups to the setting of algebras.
In~\cite{BoydHepworthStability}, Boyd and the author prove 
homological stability for the \emph{Temperley-Lieb algebras}.
There we again use the techniques of homological stability, 
but encounter --- and resolve --- novel obstructions 
that are not present in the setting of groups or of Iwahori-Hecke algebras.

To the best of our knowledge, the present paper 
and~\cite{BoydHepworthStability} are the first homological stability
results of their kind for algebras that are not group algebras,
and our hope is that they will serve as a proof of concept for the export
of homological stability techniques into new algebraic contexts.
Indeed, since the appearance of the present paper 
and~\cite{BoydHepworthStability}, 
the author together with Boyd and Patzt have used the same set of techniques 
to study the homology of Brauer algebras~\cite{BoydHepworthPatzt}.
On a related note, Sroka~\cite{Sroka} has adapted techniques from the
geometry and topology of Coxeter groups 
(specifically the Davis complex) to study
the homology of odd Temperley-Lieb algebras.

\subsection{Iwahori-Hecke algebras}

The symmetric group $\S_n$ has presentation with generators 
\[
	s_1,\ldots,s_{n-1},
\]
and with relations
\begin{align*}
	s_{i}s_j &= s_js_i  && \text{for }|i-j|>1,
	\\
	s_{i}s_js_i &= s_js_is_j  && \text{for }|i-j|=1,
	\\
	s_i^2&= e && \text{for all }i.
\end{align*}
where $s_i$ is the adjacent transposition $s_i=(i\ i+1)$.
This is the presentation of $\S_n$ as 
the \emph{Coxeter group of type $A_{n-1}$}.

Now let $R$ be a commutative ring and let $q\in R^\times$ be a unit.
The \emph{Iwahori-Hecke algebra of type $A_{n-1}$}, 
denoted $\H_n$, is the $R$-algebra with generators
\[
	T_1,\ldots,T_{n-1}
\]
and with relations
\begin{align*}
	T_{i}T_j &= T_jT_i  && \text{for }|i-j|>1,
	\\
	T_{i}T_jT_i &= T_jT_iT_j  && \text{for }|i-j|=1,
	\\
	(T_i+1)(&T_i-q)= 0 && \text{for all }i.
\end{align*}
When $q=1$, the final relation can be rewritten as $T_i^2=1$,
so that $\H_n\cong R\S_n$ by the isomorphism that sends $T_i$ to $s_i$.
Thus $\H_n$ is a `deformation' of $R\S_n$ depending on the parameter $q$.  
Taking $R=\bbC$, then $\H_n\cong\bbC\S_n$ unless 
$q$ is a $d$-th root of unity 
for $2\leqslant d\leqslant n$~\cite[Theorem~2.2]{Wenzl},
in which case no such isomorphism exists.

The algebras $\H_n$ are important from several points of
view, and we mention just a couple.
In knot theory, the $\H_n$ are a crucial ingredient in certain definitions of
the {\sc homfly-pt} polynomial~\cite{HOMFLY,Jones},
and their categorifications via Soergel bimodules are 
used to define categorifications of this polynomial~\cite{KhovanovTriply}.
In representation theory, if we take $R=\bbC$ and $q$ a prime power, 
then $\H_n$ is isomorphic to the endomorphism algebra of a certain 
representation of $\mathrm{GL}_n(\mathbb{F}_q)$,
and this allows the construction of an irreducible representation of
$\mathrm{GL}_n(\mathbb{F}_q)$ from each irreducible of $\S_n$,
see~\cite[pp.x-xi]{Mathas}.
For general introductions to the $\H_n$ we 
suggest~\cite[Chapter 1]{Mathas} and~\cite[Chapters 4-5]{KasselTuraev}.

In general, there is an Iwahori-Hecke algebra associated to any
Coxeter system,
and these more general Iwahori-Hecke algebras are important in many parts
of representation theory, see for example~\cite{GeckPfeiffer},
\cite[Chapter 7]{HumphreysReflection},
\cite{KazhdanLusztig} and \cite{Libedinsky}.
We will often refer to the $\H_n$ as \emph{Iwahori-Hecke algebras}
without explicitly mentioning their type.

\subsection{Homological stability for Iwahori-Hecke algebras}
The Iwahori-Hecke algebra $\H_n$ has two natural rank-1 modules,
denoted $\t$ and $\varepsilon$, where each $T_i$ acts on $\t$
as multiplication by $q$, and on $\varepsilon$ as multiplication by $(-1)$,
see Corollary~1.14 of~\cite{Mathas}.
When $q=1$ the modules $\t$ and $\varepsilon$ become the trivial representation
and the sign representation respectively.  
We may therefore consider 
\[
	\tor^{\H_n}_\ast(\t,\t)
	\quad\text{and}\quad
	\ext_{\H_n}^\ast(\t,\t)
\]
to be the \emph{homology} and \emph{cohomology} of $\H_n$,
and indeed when $q=1$ these become simply $H_\ast(\S_n;R)$
and $H^\ast(\S_n;R)$ respectively.
We can now state our main result:

\begin{theorem}\label{theorem-main}
	The maps
	\[
		\tor^{\H_{n-1}}_d(\t,\t)
		\longrightarrow
		\tor^{\H_n}_d(\t,\t)
	\]
	and
	\[
		\ext_{\H_n}^d(\t,\t)
		\longrightarrow
		\ext_{\H_{n-1}}^d(\t,\t)
	\]
	are isomorphisms for $d\leqslant\frac{n-1}{2}$.
\end{theorem}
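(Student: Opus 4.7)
The strategy is to transplant the classical template for homological stability of groups --- a highly connected complex acted on by the group, combined with a spectral sequence argument --- into the algebraic setting of $\H_n$, with the symmetric group's action on the complex of injective words on $\{1,\ldots,n\}$ replaced by an explicit chain complex of $\H_n$-modules.

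\emph{Step 1: A $q$-deformed complex of injective words.} For each $n$, I would construct an augmented chain complex
\[
	W_\bullet \;:\; \cdots \to W_1 \to W_0 \to \t \to 0
\]
of right $\H_n$-modules, in which $W_p$ is the induced module $\t \otimes_{\H_{n-p-1}} \H_n$ associated to the inclusion of the Hecke subalgebra generated by $T_{p+2},\ldots,T_{n-1}$. At $q=1$ this specialises to the augmented chain complex of the usual complex of injective words on $\{1,\ldots,n\}$ with its $\S_n$-action, and the set of $p$-simplices $\S_n / \S_{n-p-1}$ becomes the induced module written above. The differentials are the $q$-analogues of the alternating sum of face maps, defined $\H_n$-linearly on standard basis elements $T_w$ indexed by shortest coset representatives.

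\emph{Step 2: High acyclicity.} The central technical step is to show that $W_\bullet$ is acyclic through homological degree roughly $\lfloor (n-1)/2 \rfloor$ or better. At $q=1$ this reduces to the classical $(n-2)$-connectivity of the complex of injective words (Farmer, Bj\"orner--Wachs). For general $q$ I would look for an explicit contracting chain homotopy $s : W_p \to W_{p+1}$ modelled on the ``prepend a new letter'' homotopy used in the symmetric group setting, written on the $T_w$ basis. I expect this to be the main obstacle: the deformed quadratic relation $(T_i+1)(T_i-q)=0$ forces $q$-correction terms to appear when one tries to verify $ds + sd = \mathrm{id}$, and establishing that these correction terms cancel across the alternating sum is where the genuinely $q$-combinatorial part of the argument lives.

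\emph{Step 3: Spectral sequence and induction.} Granted Step~2, I would take a projective resolution $P_\bullet \to \t$ of left $\H_n$-modules and form the double complex $W_\bullet \otimes_{\H_n} P_\bullet$. One of the two associated spectral sequences collapses, by the acyclicity of $W_\bullet$, onto $\tor^{\H_n}_\ast(\t,\t)$ in total degrees within the acyclicity range. The other, via Shapiro's lemma for induced modules, has
\[
	E^1_{p,q} \;\cong\; \tor^{\H_{n-p-1}}_q(\t,\t),
\]
with $d^1$ an alternating sum of stabilisation maps $\tor^{\H_{n-p-1}}_q(\t,\t) \to \tor^{\H_{n-p}}_q(\t,\t)$. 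By induction on $n$, these maps are isomorphisms inside the stable range, so the rows of the $E^1$-page become chains of alternating isomorphisms, the spectral sequence collapses at $E^2$, and the isomorphism $\tor^{\H_{n-1}}_d(\t,\t) \to \tor^{\H_n}_d(\t,\t)$ in the range $d \leqslant (n-1)/2$ is read off from the edge.

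\emph{Step 4: The Ext statement.} The cohomology claim follows by applying the Hom version of the same argument: one uses $\hom_{\H_n}(P_\bullet,-)$ in place of $-\otimes_{\H_n}P_\bullet$, invokes the Ext analogue of Shapiro's lemma to identify the $E_1$-page with $\ext_{\H_{n-p-1}}^q(\t,\t)$, and runs the same inductive collapse.
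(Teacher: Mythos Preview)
Your overall architecture --- the induced-module complex $W_p \cong \t\otimes_{\H_{n-p-1}}\H_n$, Shapiro's lemma to identify the $E^1$-page, and the inductive spectral sequence argument --- is exactly the paper's strategy. The paper's complex $\D(n)$ is your $W_\bullet$ (in left-module form), and the paper's section~\ref{section-spectral-sequence} runs precisely your Steps~3 and~4, with the refinement that each individual face map becomes, up to chain homotopy, $q^j$ times the stabilisation map, so that $d^1=\sum_j(-1)^j q^{-j}\cdot q^j\cdot(\text{stabilisation})$ equals the stabilisation map for even $s$ and zero for odd $s$.

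The genuine gap is Step~2. The augmented complex $W_\bullet\to\t$ is \emph{not} contractible: already at $q=1$ it is the complex of injective words, which is $(n-1)$-spherical with nonzero top homology, so no global chain homotopy $s$ with $ds+sd=\mathrm{id}$ can exist. The ``prepend the letter $n$'' map is only defined on words not containing $n$, i.e.\ on a proper subcomplex. What the paper does instead is to filter $\D(n)$ by $\H_{n-1}$-subcomplexes $F_0\subseteq\cdots\subseteq F_{n-1}=\D(n)$, indexed by the position of (the algebraic avatar of) the letter $n$, and then prove isomorphisms
\[
F_0\cong C(\D(n-1)),
\qquad
F_p/F_{p-1}\cong \H_{n-1}\otimes_{\H_{n-p-1}}\Sigma^{p+1}\D(n-p-1),
\]
so that $H_d(\D(n))=0$ for $d\leqslant n-2$ follows by induction on $n$. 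Your ``prepend $n$'' idea survives only as the identification of $F_0$ with a cone. Verifying these filtration-quotient isomorphisms in the Hecke setting (the paper's sections~\ref{section-filtration}--\ref{section-filtration-quotients}) is where the $q$-combinatorics you anticipate actually lives: one must track powers of $q$ and the single appearance of $T_i^2=(q-1)T_i+q$ through the quotients, using distinguished coset representatives and the basis theorem for $\H_n$.

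Finally, ``roughly $\lfloor(n-1)/2\rfloor$'' is not enough: the spectral sequence step needs $E^\infty$ to vanish in total degree $\leqslant n-2$, so you genuinely require $H_d(\D(n))=0$ for all $d\leqslant n-2$, which the filtration argument delivers.
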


When $q=1$ then $\H_n\cong R\S_n$, and Theorem~\ref{theorem-main} 
gives exactly Nakaoka's stability result 
for the homology and cohomology of symmetric groups.
See \cite[Corollary 6.7]{Nakaoka}, \cite[Theorem~2]{Kerz}
and \cite[Theorem~5.1]{RandalWilliamsConfig}.
Nakaoka in fact gave a complete computation
of $H_\ast(\S_n;\bbF_p)$ for any prime $p$, and this can be used to show that
for $k\geqslant 1$ the map $H_k(\S_{2k-1};\bbF_2)\to H_k(\S_{2k};\bbF_2)$ 
is not surjective.  Thus the range $d\leqslant\frac{n-1}{2}$ 
appearing in the theorem cannot be improved in general.

\subsection{Comparison with work of Benson-Erdmann-Mikaelian}
The cohomology ring $\ext_{\H_n}^\ast(\t,\t)$ of $\H_n$ was 
explicitly computed by
Benson, Erdmann and Mikaelian~\cite{BEM} in the case where 
$R=\bbC$ and $q$ is a primitive $\ell$-th root of unity with $\ell\geqslant 2$.
In the case $\ell>n$, the result of Wenzl mentioned above shows that
$\H_n\cong\bbC\S_n$, so that $\ext_{\H_n}^d(\t,\t)=H^\ast(\S_n;\bbC)$ 
is trivial,
but when $2\leqslant \ell\leqslant n$ then no such isomorphism holds,
and indeed Benson-Erdmann-Mikaelian
show that $\ext_{\H_n}^d(\t,\t)$ is nontrivial.
Furthermore, one can use their results to observe that in this case 
the stabilisation maps $\ext_{\H_n}^d(\t,\t)\to\ext_{\H_{n-1}}^d(\t,\t)$ 
are isomorphisms up to and including (at least) degree $(n-2)$.
So \cite{BEM} serves as an antecedent of the present work,
but more interestingly, it demonstrates a much stronger
stable range in this case, of slope $1$ rather than slope $\frac{1}{2}$.  
This is reminiscent of the slope $1$ rational
homological stability results for configuration spaces of manifolds
(see for example Corollary~3 of~\cite{Church} 
and Theorem~B of~\cite{RandalWilliamsConfig}).
It suggests that there may be a slope $1$ stability
result for the $\H_n$ in characteristic $0$.

\subsection{Discussion:
Homological stability for Coxeter groups and Artin monoids}
The present paper builds strongly on previous work of the 
author~\cite{HepworthCoxeter}, which proved homological stability
for families of Coxeter groups, and of Boyd~\cite{Boyd}, which
proved homological stability for families of Artin monoids.  
These papers
demonstrated that one can do all of the normal
work of a homological stability proof purely in terms of a Coxeter or Artin-type
presentation, rather than in terms of a concrete model of the group
or monoid being studied.
The defining presentation of the Iwahori-Hecke algebra $\H_n$
is of course very close to both of these, being a deformation of the
Coxeter presentation of $\S_n$, and a quotient of the Artin presentation
of the braid group (or rather of their group rings).

In both \cite{HepworthCoxeter} and \cite{Boyd},
the results apply to families of groups or monoids obtained from
sequences of Coxeter diagrams 
that `grow a tail' of type $A_{n-1}$ as $n$ increases.
These families are very general,
but include as the basic case the families of type $A$,
$B$ and $D$.
So one may ask whether Theorem~\ref{theorem-main}
can be extended to apply to any of these more general families.
This seems likely, but we were not able to prove Theorem~\ref{theorem-main}
by generalising the method of~\cite{HepworthCoxeter}
from Coxeter groups to Iwahori-Hecke algebras; this is discussed
further in section~\ref{subsection-method} below.

\subsection{Discussion: Stable homology}
Theorem~\ref{theorem-main} shows that, in a fixed degree $d$,
then for $n$ sufficiently large the groups $\tor_d^{\H_n}(\t,\t)$
all agree and coincide with the \emph{stable homology}
\[
	\colim_n\tor^{\H_n}_d(\t,\t) = \tor^{\H_\infty}_d(\t,\t),
\]
where $\H_\infty = \colim_n\H_n$ is the `infinite' Iwahori-Hecke algebra.
When $q=1$, the stable homology $\tor^{\H_\infty}_\ast(\t,\t)$ coincides
with the homology of the infinite symmetric group, $H_\ast(\Sigma_\infty;R)$,
which is computed by the Barratt-Priddy-Quillen theorem
\cite{BarrattPriddy}, \cite{QuillenQ}:
\[
	H_\ast(\Sigma_\infty;R)\cong H_\ast(\Omega_0^\infty S^\infty;R).
\]
Here $\Omega^\infty S^\infty=\colim_n\Omega^nS^n$ 
is the infinite loop space of the sphere spectrum,
and $\Omega_0^\infty S^\infty$ is the path component of its basepoint.
It is therefore natural to ask what is the stable homology
$\tor^{\H_n}_\ast(\t,\t)$ in general?
To put it another way, 
what is the Iwahori-Hecke analogue of $H_\ast(\Omega_0^\infty S^\infty;R)$?

\subsection{Discussion: Homological stability for algebras}

As we said earlier, we believe that the work of the present paper 
on Iwahori-Hecke algebras and of~\cite{BoydHepworthStability} on
Temperley-Lieb algebras
are the first time the techniques of homological stability have
been applied to families of algebras that are not group algebras,
and we hope that they will serve as a starting point for new work
in this area.
We refer the reader to the
introduction of~\cite{BoydHepworthStability}, where several 
possible directions are discussed in some detail.

\subsection{Method of proof}\label{subsection-method}

Proofs of homological stability for sequences of groups $(G_n)_{n\geqslant 0}$
can often be placed in the following broad framework:
\begin{itemize}
	\item
	Find a complex (a simplicial complex, or semisimplicial
	set, or chain complex) upon which the $n$-th group $G_n$
	acts in such a way that the
	simplex stabilisers are of the form $G_m$ for $m<n$.
	(Or at least, the simplex stabilisers must be 
	associated to the previous groups in the sequence in some way).
	\item
	Prove that the complex is highly acyclic, i.e.~that its homology
	vanishes up to a certain point.
	\item
	Use an algebraic method (often but not always a spectral
	sequence argument) based on the complex in order 
	to prove stability by induction.
\end{itemize}
While many different proofs fit this framework when viewed from a distance,
there are many choices to be made and many variations are possible.
It may be possible to prove stability for the same family of groups
by choosing different complexes to begin with.  It may be possible
to prove high-acyclicity of the same complex in multiple ways.  And it may
be possible to use the same complex in different algebraic arguments
to prove stability.

Our approach to proving Theorem~\ref{theorem-main} fits into the framework
outlined above.  There is a well-known complex, called the 
\emph{complex of injective words}, that is used in many proofs of 
homological stability for the symmetric group.
For our complex, we construct an Iwahori-Hecke analogue of the complex
of injective words. 
While the complex of injective words has an action
of $\S_n$, our new complex is a chain complex of $\H_n$-modules;
and while the generators of the complex of injective words have
stabilisers given by smaller symmetric groups, our new complex is built
out of tensor products like $\H_n\otimes_{\H_{m}}\t$ for $m<n$.
The proof that our complex is highly acyclic is closely modelled on,
but far more involved than, a proof that the complex of injective
words is highly acyclic, and requires us to make careful use of
the theory of distinguished coset representatives in Coxeter
groups and the basis theorem for Iwahori-Hecke algebras.
Furthermore, new difficulties arise 
because $q$ is no longer equal to $1$, so that
one must now account for many hitherto-invisible powers of $q$.
(Surprisingly, the formula $T_k^2 = (q-1)T_k+q$ explicitly 
surfaces in only one place, and it quickly disappears again.)
The final step of our argument is a spectral sequence argument 
closely related to ones in the literature.

A lot of the difficulty in the present paper boils down to the fact
that we are operating under two significant constraints.
First, we are not working with the symmetric group, but with its
Iwahori-Hecke algebra, and while $\H_n$ is closely related to 
$\S_n$ \emph{thought of as a Coxeter group}, it is not useful to
think of $\H_n$ in terms of permutations of the set $\{1,\ldots,n\}$.
This means that we can approach the complex of injective words
only in terms of the Coxeter presentation of $\S_n$.
Second, the \emph{linear} nature of Iwahori-Hecke algebras
heavily restricts the suite of topological tools that we can apply.
For example, the approach of~\cite{HepworthCoxeter} to proving homological
stability for Coxeter groups could not be adapted to this setting
since it made use of simplicial complexes and barycentric subdivision,
which do not seem to have analogues in the linear setting.

There are by now several systematic approaches to proving homological stability
results, for example Randal-Williams and Wahl's 
approach~\cite{RandalWilliamsWahl} via homogeneous categories, 
the author's approach via families of groups with 
multiplication~\cite{HepworthEdge}, and 
Kupers, Galatius and Randal-Williams' approach via 
cellular $E_k$-algebras~\cite{GKRW-Ek}.  
One may ask whether the present
results could be proved using any of these frameworks.  
In the first two cases the answer is no, 
since these are designed purely for the study of groups,
though it is plausible that a `linearised' version of \cite{RandalWilliamsWahl}
would produce the same complex that we use.
In the final case, it seems that the methods of~\cite{GKRW-Ek} 
could possibly be applied in the present situation,
but we have taken a significantly more elementary approach.

\subsection{Outline of the paper}
\begin{itemize}
	\item
	We begin in section~\ref{section-iwahori-hecke} with some detailed
	background on Iwahori-Hecke algebras.
	\item
	In section~\ref{section-injective-words} 
	we give a short account of the complex of injective
	words and a proof that it is highly-acyclic.
	\item
	In section~\ref{section-translation} we rephrase the complex
	of injective words entirely in terms of the group ring $R\S_n$,
	where $\S_n$ is regarded as a Coxeter group.
	This formulation places the existing work on the symmetric groups
	in a setting where it can be extended to Iwahori-Hecke algebras.
	\item
	In section~\ref{section-Dn-overview} we give an overview of the
	construction of our analogue of the complex of injective words,
	$\D(n)$, and the proof of its high-acyclicity. 
	We set out how our approach builds on the rephrasing given in 
	section~\ref{section-translation},
	we describe the difficulties that arise,
	and we preview the work that follows in sections~\ref{section-Dn},
	\ref{section-filtration} and~\ref{section-filtration-quotients}.
	\item
	In section~\ref{section-Dn} we define $\D(n)$.
	\item
	In sections~\ref{section-filtration} 
	and \ref{section-filtration-quotients},
	we show that the homology of $\D(n)$ is zero up to 
	and including degree $(n-2)$.  
	Section~\ref{section-filtration} defines a filtration
	of $\D(n)$, while section~\ref{section-filtration-quotients}
	identifies the filtration quotients in terms of the 
	$\D(m)$ for $m<n$, allowing an inductive proof
	of high-acyclicity.
	\item
	In section~\ref{section-spectral-sequence} we obtain a spectral
	sequence from $\D(n)$ and identify its $E_1$ and $E_\infty$ terms.
	\item
	In section~\ref{section-argument} we use the spectral sequence 
	to give an inductive proof of Theorem~\ref{theorem-main}.
\end{itemize}

\begin{acknowledgment}
	I would like to thank the anonymous referee, whose comments helped to 
	significantly improve the readability of the paper.
\end{acknowledgment}

\section{Background on Iwahori-Hecke algebras}\label{section-iwahori-hecke}

This section is a rapid run through the theory of Coxeter groups
and Iwahori-Hecke algebras that is necessary for the applications
in this paper.  The intention is to give the reader a flavour of the
extent and depth of the theory required.  
Everything that we recall here is basic in the theory of Coxeter groups 
and Iwahori-Hecke algebras,
but it nevertheless amounts to a significant amount of nontrivial theory.
However, none of this theory
is strictly necessary until subsection~\ref{subsection-isomorphism}, and
some readers may wish to skim or skip the section until then.
For further reading we recommend 
chapters~1, 2 and~4 of~\cite{GeckPfeiffer},
chapters~3 and~4 of~\cite{Davis},
or chapter~1 of~\cite{Mathas} in the $\H_n$-case.

\subsection{Coxeter systems and Coxeter groups}

A \emph{Coxeter matrix} on a set $S$ is a symmetric $S\times S$ matrix whose
entries lie in $\{1,2,3,\ldots,\infty\}$ and satisfy $m_{ss}=1$ 
for all $s\in S$, $m_{st}\geqslant 2$ if $s\neq t$.
A Coxeter matrix determines a \emph{Coxeter group}
\[
	W=\Big\langle
		S
		\ \Big|\ 
		s^2=e\text{ for }s\in S,
		\ \underbrace{sts\cdots}_{m_{st}\text{ terms}} 
		= \underbrace{tst\cdots}_{m_{st}\text{ terms}}
		\text{ for }s,t\in S
	\Big\rangle
\]
When $m_{st}=\infty$ no relation is applied.
The relations
\[
	\underbrace{sts\cdots}_{m_{st}\text{ terms}} 
		= \underbrace{tst\cdots}_{m_{st}\text{ terms}}
\]
are called the \emph{braid relations} or \emph{braid moves}.
The pair $(W,S)$ is called a \emph{Coxeter system}.

\begin{example}[The Coxeter system of type $A_{n-1}$]\label{example-coxeter-An}
	Let $W=\S_n$ be the symmetric group on $n$ letters,
	and let $S_n=\{s_1,\ldots,s_{n-1}\}$ where $s_i$
	is the adjacent transposition $(i\ i+1)$.
	Then $(\S_n,S_n)$ is a Coxeter system,
	called the \emph{Coxeter system of type $A_{n-1}$}.
	Observe that:
	\[
		m_{s_is_j}=\begin{cases}
			2 & \text{ if }|i-j|>1
			\\
			3 & \text{ if }|i-j|=1
		\end{cases}
	\]
	Indeed, if $|i-j|>1$ then $s_i$ and $s_j$ are disjoint
	transpositions, so that $s_is_j$ has order $2$,
	while if $|i-j|=1$ then $s_is_j$ is a $3$-cycle, and so
	has order $3$.
	(The observation really just shows that if $W$ is the Coxeter
	group of this type, then the relevant relations hold in $\S_n$
	so that there is a surjection $W\to\S_n$.  To show that this is
	an isomorphism, one must show that the relations of the Coxeter group
	are sufficient to relate any two words representing the same element
	of $\S_n$.  That is a simple exercise.)
\end{example}

Let $(W,S)$ be a Coxeter system.
A \emph{word} in $S$ is a tuple $\bfs=(s_1,\ldots,s_l)$ of elements of $S$.
We say that $w=w(\bfs)=s_1\ldots s_l$ 
is the element \emph{represented} by $\bfs$,
and we say equivalently that $\bfs$ is an \emph{expression} for $w=w(\bfs)$.
The \emph{length} of an element $w\in W$, denoted $\ell(w)$, 
is the minimum length of a word representing $w$.
We say that $\bfs$ is a \emph{reduced expression} for $w$ if it is a word
of minimum length representing $w$.

We will often blur the difference between words and their expressions,
writing $w=s_1\cdots s_l$ for an element of $W$, and referring to
$s_1\cdots s_l$ as a \emph{word} or  \emph{expression} for $w$, 
hoping that it will be clear from what is written 
that the expression $(s_1,\ldots,s_l)$ is to be understood.

Given an element $w\in W$, there are two possibilities for
$\ell(sw)$:
\begin{itemize}
	\item
	$\ell(sw) = \ell(w)+1$.
	In this case one can obtain a reduced expression for $sw$
	by putting $s$ in front of a reduced expression for $w$.

	\item
	$\ell(sw) = \ell(w)-1$.
	In this case $w$ has a reduced expression beginning
	with $s$.
\end{itemize}
(See~\cite[pp.35-36]{Davis}.)

Here are two important results on reduced words in Coxeter groups.

\begin{theorem}[{Matsumoto's theorem~\cite{Matsumoto}, 
\cite[section 1.2]{GeckPfeiffer}}] \label{theorem-matsumoto}
	Let $(W,S)$ be a Coxeter system.  Then
	any reduced expression for an element of $W$
	can be transformed into any other by repeatedly
	replacing subwords of the form
	$sts\cdots$ (with $m_{st}$ terms)
	with $tst\cdots$ (again with $m_{st}$ terms).
\end{theorem}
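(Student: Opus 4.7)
The plan is to induct on the common length $\ell = \ell(w)$ of the two reduced expressions, with trivial base case $\ell \leqslant 1$. Call two reduced expressions \emph{braid-equivalent} if one may be obtained from the other by the replacements described in the statement. For the inductive step, let $\bfs = (s_1,\ldots,s_\ell)$ and $\bfs' = (t_1,\ldots,t_\ell)$ be two reduced expressions for $w$, and I want to show they are braid-equivalent.

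The easy case is $s_1 = t_1$: the tails $(s_2,\ldots,s_\ell)$ and $(t_2,\ldots,t_\ell)$ are then two reduced expressions for the length $\ell-1$ element $s_1 w$, so the inductive hypothesis supplies a braid-equivalence between them, and prepending $s_1$ concludes. The real work is in the case $s_1 \neq t_1$, where I set $s = s_1$, $t = t_1$, and $m = m_{st}$. The crux of the proof is the following lifting lemma: if $w$ admits reduced expressions beginning with $s$ and with $t$ respectively, then $m < \infty$ and $w$ admits a reduced expression beginning with the alternating word $\underbrace{sts\cdots}_{m\text{ terms}}$. Granting this, choose such a reduced expression $\bfu$ for $w$, and let $\bfu'$ be obtained from $\bfu$ by the single braid move replacing the initial $sts\cdots$ with $tst\cdots$. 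Then $\bfs$ and $\bfu$ both begin with $s$, so they are braid-equivalent by the easy case, and similarly $\bfs'$ and $\bfu'$ both begin with $t$ and are braid-equivalent. Stringing these together, $\bfs \sim \bfu \sim \bfu' \sim \bfs'$, completing the induction.

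The main obstacle is therefore the lifting lemma, which is where genuine Coxeter-theoretic geometry must enter, since no amount of formal word manipulation can produce a reduced expression of a prescribed shape out of thin air. The route I would take is via the exchange condition, itself a consequence of the reflection representation and root system of $(W,S)$: given that $s$ and $t$ are both left descents of $w$, the exchange condition lets one peel off an initial $s$; if both $s$ and $t$ are still left descents of the shorter element, the exchange condition lets one peel off a $t$ next, then an $s$, and so on, producing an alternating prefix of $w$ of steadily increasing length. A dihedral-subgroup analysis, or an appeal to the theory of $\{s,t\}$-cosets, shows that this process extracts exactly $m$ alternating letters before it halts, forcing in particular that $m < \infty$.

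Rather than redevelop this machinery, I would cite the standard account, for example Chapter~1 of \cite{GeckPfeiffer} or Chapter~3 of \cite{Davis}, where the exchange condition and its consequences for parabolic subgroups are established in full. The bookkeeping in the alternating-peel argument is routine once the exchange condition is in hand, so the entire difficulty of Matsumoto's theorem is concentrated in the reflection-geometric setup that underlies it; the induction scheme above then converts that input into the braid-move normal form we want.
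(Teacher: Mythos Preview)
The paper does not give its own proof of Matsumoto's theorem: it is stated as background and attributed to \cite{Matsumoto} and \cite[section~1.2]{GeckPfeiffer}, with no argument supplied. So there is nothing in the paper to compare your proposal against.

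That said, your sketch is the standard textbook proof (essentially the one in Geck--Pfeiffer), and the outline is correct. The induction on length, the reduction to the case of distinct initial letters, and the appeal to the lifting lemma (that if $s$ and $t$ are both left descents then $m_{st}<\infty$ and the alternating word of length $m_{st}$ is a left prefix of some reduced expression for $w$) are exactly the usual ingredients. Your honesty that the lifting lemma is where the real content lies, and that it ultimately rests on the exchange condition (or equivalently on the theory of the parabolic subgroup $W_{\{s,t\}}$ and its longest element), is well placed. If you were to write this out in full you would want to be slightly more careful in the alternating-peel argument: the cleanest phrasing is that the $\{s,t\}$-component of $w$ in the decomposition $w = w_{\{s,t\}} \cdot x$ with $x\in X_{\{s,t\}}$ must have both $s$ and $t$ as descents, hence equals the longest element of the dihedral group $W_{\{s,t\}}$, which forces $m_{st}<\infty$ and gives the required prefix directly.
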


\begin{theorem}[The word problem, {Tits~\cite{Tits}, \cite[3.4.2]{Davis}}]
\label{theorem-word}
	Let $(W,S)$ be a Coxeter system.  Then
	a word in $S$ is a reduced expression 
	if and only if it cannot be shortened
	by applying a sequence of the following \emph{M-operations}
	or \emph{M-moves}:
	\begin{itemize}
		\item
		Delete a subword of the form $ss$.
		\item
		Replace a subword of the form 
		$\underbrace{sts\cdots}_{m_{st}\text{ terms}}$
		with
		$\underbrace{tst\cdots}_{m_{st}\text{ terms}}$.
	\end{itemize}
	Any two reduced expressions for the same element
	differ only by a sequence of moves of the second kind.
\end{theorem}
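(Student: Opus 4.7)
The plan is to treat the two assertions in the theorem separately. The statement that any two reduced expressions for the same element differ only by braid moves is exactly the content of Matsumoto's theorem (Theorem~\ref{theorem-matsumoto}) and so is already available. I therefore concentrate on the biconditional: a word is reduced if and only if it cannot be shortened by a sequence of M-moves. One direction is immediate, since each M-move preserves the element of $W$ represented (braid moves by definition, and $ss$-deletions because $s^2=e$), and so a word admitting a shortening sequence of M-moves cannot have been of minimum length.

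For the substantive direction I would induct on the word length $l$, the cases $l\leqslant 1$ being vacuous. In the inductive step, let $\bfs=(s_1,\ldots,s_l)$ be non-reduced. If the prefix $(s_1,\ldots,s_{l-1})$ is already non-reduced then the induction hypothesis shortens it by M-moves, and appending $s_l$ extends this to a shortening of $\bfs$. Otherwise $(s_1,\ldots,s_{l-1})$ is reduced of length $l-1$, while $\ell(s_1\cdots s_l)<l$; the fact recalled in the excerpt that multiplication by a generator changes length by exactly $\pm 1$ then forces $\ell(s_1\cdots s_l)=l-2$. Thus right-multiplication by $s_l$ strictly decreases the length of $s_1\cdots s_{l-1}$, and the right-hand analogue of the second possibility listed just before Theorem~\ref{theorem-matsumoto}---obtained via $w\mapsto w^{-1}$, which reverses reduced expressions since $s^{-1}=s$---guarantees that $s_1\cdots s_{l-1}$ admits a reduced expression $(t_1,\ldots,t_{l-2},s_l)$ ending in $s_l$.

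Matsumoto's theorem now delivers the decisive step: because both $(s_1,\ldots,s_{l-1})$ and $(t_1,\ldots,t_{l-2},s_l)$ are reduced expressions for the same element, they are related by braid moves alone. Applying those braid moves inside $\bfs$ and leaving the trailing $s_l$ untouched transforms $\bfs$ into $(t_1,\ldots,t_{l-2},s_l,s_l)$, whereupon a single $ss$-deletion shortens the word and closes the induction. The principal obstacle is not any individual step---each is a quick consequence of the Coxeter-theoretic background already recalled in the excerpt---but rather the recognition that Matsumoto's theorem is precisely the tool needed to manoeuvre a repeated generator into adjacent position, so that the $ss$-deletion rule ever becomes applicable.
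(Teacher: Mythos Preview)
The paper does not actually give a proof of this theorem: it is stated as background material with citations to Tits and to Davis~\cite[3.4.2]{Davis}, and no argument is supplied. So there is no ``paper's own proof'' to compare against.

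That said, your argument is correct and is essentially the standard deduction of Tits' solution to the word problem from Matsumoto's theorem. The key steps---reducing to the case where the length-$(l-1)$ prefix is reduced, using the $\pm 1$ length property to force $\ell(s_1\cdots s_l)=l-2$, invoking the right-handed version of the fact that length-decrease implies a reduced expression ending in the relevant generator, and then applying Matsumoto to bring the repeated $s_l$ into adjacent position---are all sound. One small caution worth recording explicitly: this argument presupposes that Matsumoto's theorem is available independently of Tits' result (as it is, for instance via the Exchange Condition), since in some treatments the two are proved together or in the opposite logical order; the paper does present Matsumoto's theorem as a separate prior result, so within this paper's framework there is no circularity.
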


Let $(W,S)$ be a Coxeter system.
Let $T\subseteq S$.
The associated \emph{special subgroup} is the subgroup of $W$ 
generated by $T$, and is denoted $W_T$.
The pair $(W_T,T)$ is then a Coxeter system, which is to say,
$W_T$ is precisely the Coxeter group with generators $T$ and
with Coxeter matrix obtained from the Coxeter matrix of $(W,S)$
in the evident way~\cite[4.1.6]{Davis}.

\subsection{Cosets in Coxeter groups}\label{subsection-cosets}
For the material in this subsection we refer to section~2.1
of~\cite{GeckPfeiffer} and section~4.3 of~\cite{Davis}.

Let $(W,S)$ be a Coxeter system, and let $J\subseteq S$.
The cosets $W_J\backslash W$ are the subject of the following theory,
which will be extremely useful to us.
Define
\[
	X_J = \{w\in W\mid \ell(sw)>\ell(w)\text{ for all }s\in J\}.
\]
Thus $X_J$ consists of all elements of $W$ that have no 
reduced expressions beginning with an element of $J$.
The elements of $X_J$ are called \emph{$(J,\emptyset)$-reduced},
and referred to as the \emph{distinguished right coset representatives}
for $W_J$, for reasons that the next theorem
will make clear.
If $J\subseteq K\subseteq S$, then we write $X^K_J$ for the set of distinguished
right-coset representatives for $W_J$ in $W_K$.

\begin{theorem}\label{theorem-distinguished}
	\begin{enumerate}
		\item
		$x\in X_J$ if and only if $\ell(vx) = \ell(v)+\ell(x)$
		for all $v\in W_J$.
		\item
		For each $w\in W$ there exist unique $x\in X_J$
		and $v\in W_J$ such that $w = vx$.
		\item
		$X_J$ forms a complete set of representatives for
		$W_J\backslash W$.
		\item
		If $x\in X_J$ then $x$ is the unique shortest
		element in $W_J x$.
	\end{enumerate}
\end{theorem}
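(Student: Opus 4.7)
The plan is to prove (1) first; parts (2), (3), (4) will then follow from (1) by elementary arguments. The forward direction of (1) is immediate, since taking $v = s \in J$ in the hypothesis recovers the condition $\ell(sx)>\ell(x)$ that defines $X_J$. So the content lies in the reverse direction: $x \in X_J$ implies $\ell(vx)=\ell(v)+\ell(x)$ for every $v \in W_J$.

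For this I would induct on $\ell(v)$. The base case $v=e$ is trivial. For the step, pick $s \in J$ with $\ell(sv)=\ell(v)-1$ (such an $s$ exists because $v \in W_J$ has a reduced expression in the letters of $J$), and write $v = sv'$ with $\ell(v')=\ell(v)-1$. By the induction hypothesis, $\ell(v'x)=\ell(v')+\ell(x)$, so the concatenation $\mathbf{v}'\mathbf{x}$ of any reduced expressions is itself a reduced expression for $v'x$. The required equality reduces to $\ell(sv'x) = \ell(v'x)+1$, and since this length differs from $\ell(v'x)$ by $\pm 1$, it remains to rule out $\ell(sv'x) = \ell(v'x)-1$. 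If that held, the strong exchange condition would furnish a letter of $\mathbf{v}'\mathbf{x}$ whose deletion gives a reduced expression for $sv'x$.

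The main obstacle is the case analysis that follows. If the deleted letter lies in the $\mathbf{v}'$-block, then $sv'$ has length at most $\ell(v')-1$, contradicting our choice of $s$. If it lies in the $\mathbf{x}$-block, then $sv'x = v' \cdot x'$ with $\ell(x')=\ell(x)-1$, which after multiplication on the left by $v'^{-1}s$ rewrites as $x = r x'$, where $r = v'^{-1}sv'$ is a reflection lying in $W_J$ (since $s \in J$ and $v' \in W_J$), and in particular $\ell(rx)<\ell(x)$. The real technical heart is to show that a reflection of $W_J$ shortening $x$ forces some simple reflection of $J$ to shorten $x$ as well, contradicting $x \in X_J$. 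The cleanest route is via the root-system picture of $(W,S)$: for $x \in X_J$ the positive roots that $x^{-1}$ sends to negative ones are disjoint from the positive subsystem of $W_J$, and this translates directly into the desired contradiction.

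Once (1) is proved, the rest is routine. For (2), take $x$ of minimal length in $W_J w$; for any $s \in J$, if $\ell(sx)<\ell(x)$ then $sx \in W_J w$ would be shorter, a contradiction, so $x \in X_J$, and $v = wx^{-1} \in W_J$ yields the decomposition $w = vx$. For uniqueness, if $vx = v'x'$, set $u = (v')^{-1}v \in W_J$, so $x' = ux$; part (1) gives $\ell(x')=\ell(u)+\ell(x)$, and by symmetry $\ell(x)=\ell(u)+\ell(x')$, forcing $\ell(u)=0$, whence $v=v'$ and $x=x'$. Part (3) is just (2) restated in the language of coset representatives. For (4), every element of $W_J x$ has the form $vx$ with $v \in W_J$, and by (1) its length is $\ell(v)+\ell(x) \geq \ell(x)$, with equality only when $v=e$.
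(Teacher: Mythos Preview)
The paper does not supply its own proof of this theorem: it is stated as background in Section~2, with the explicit disclaimer that the material of that subsection is drawn from \cite[Section~2.1]{GeckPfeiffer} and \cite[Section~4.3]{Davis}. So there is no in-paper argument to compare against.

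Your argument is correct and is essentially one of the standard proofs found in those references. A couple of minor remarks. First, in Case~1 of your exchange analysis you say the contradiction is with ``our choice of $s$''; more precisely, deleting a letter from $\mathbf{v}'$ gives $sv' = v''$ with $\ell(v'')\leqslant\ell(v')-1$, whereas $sv' = s(sv)=v$ has length $\ell(v')+1$. Second, the root-system step you flag as the ``technical heart'' is indeed the crux and is handled exactly as you outline: $x\in X_J$ forces $x^{-1}\alpha_s>0$ for each $s\in J$, hence $x^{-1}\beta>0$ for every positive root $\beta$ of the parabolic subsystem, so no reflection of $W_J$ can shorten $x$. One can also avoid roots entirely by the combinatorial route in \cite[Lemma~2.1.2]{GeckPfeiffer}, but your approach is equally valid. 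Parts (2)--(4) then follow from (1) just as you describe.
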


There is a similar theory for the cosets $W/W_J$, in which the role of
$X_J$ is now played by $X_J^{-1}$, elements of which are called
\emph{$(\emptyset,J)$-reduced}.

Moreover, if $J,K\subseteq S$ then
there is also a theory for the double cosets $W_J\backslash W/W_K$.
We define $X_{JK} = X_J\cap X_K^{-1}$.  
Thus an element $x\in W$ lies in $X_{JK}$ if and only
if it has no reduced expressions beginning with a letter in $J$
or ending with a letter in $K$.
The elements of $X_{JK}$
are called \emph{distinguished double coset representatives of
$W_J$ and $W_K$ in $W$}, and we also refer to them as
\emph{$(J,K)$-reduced}.  They form a complete set of 
representatives for the double cosets $W_J\backslash W/W_K$,
and each one is the unique shortest element in its double coset.

The \emph{Mackey decomposition} states that for $J,K\subseteq S$,
\[
	X_J 
	= \bigsqcup_{d\in X_{JK}} d\cdot X^K_{J^d\cap K}	
\]
Inverting the Mackey decomposition gives us a version for
the left cosets
\[
	X_J^{-1} 
	= \bigsqcup_{d\in X_{KJ}} (X^K_{K\cap \prescript{d}{}\!J})^{-1}\cdot d
\]
Here, as is common in group theory,
the notation $J^d$ and $\prescript{d}{}\!J$ denotes conjugation
by $d$, with the positive power of $d$ on the side indicated by
the notation, so that
\[
	J^d=\{d^{-1}jd\mid j\in J\},
	\qquad
	\prescript{d}{}\!J = \{djd^{-1}\mid j\in J\}.
\]
Observe that in both Mackey decompositions the lengths add in products.
For example, suppose that $d\in X_{JK}$ and $y\in X^K_{J^d\cap K}$,
so that $d\in X_K^{-1}$ and $y\in W_K$, and hence $\ell(dy) = \ell(d)+\ell(y)$.

\subsection{Iwahori-Hecke algebras}\label{subsection-iwahori-hecke}

Let $(W,S)$ be a Coxeter system, let $R$ be a commutative ring,
and let $q\in R^\times$ be a unit.  
The \emph{Iwahori-Hecke algebra} associated to $(W,S)$ is the algebra 
$\H_W$ with generators 
\[
	T_s\text{ for }s\in S
\]
and relations:
\begin{align*}
	\underbrace{T_sT_tT_s\cdots}_{m_{st}\text{ terms}} 
	&= \underbrace{T_tT_sT_t\cdots}_{m_{st}\text{ terms}}
	&& 
	\text{for }s,t\in S
	\\
	(T_s+1)(&T_s-q)= 0 
	&& 
	\text{for }s\in S
\end{align*}

\begin{example}[The Iwahori-Hecke algebra of type $A_{n-1}$]
\label{example-ih-An}
	Take $W=\S_n$ and $S_n=\{s_1,\ldots,s_{n-1}\}$,
	as in Example~\ref{example-coxeter-An},
	so that $(W,S)=(\S_n,S_n)$ is the Coxeter system
	of type $A_{n-1}$.
	Then $m_{s_is_j}=2$ if $|s_i-s_j|>1$, and 
	and $m_{s_is_j}=3$ if $|s_i-s_j|=1$, so that $\H_{\S_n}$
	has generators
	\[
		T_{s_1},\ldots,T_{s_{n-1}}
	\]
	and relations
	\begin{align*}
		T_{s_i}T_{s_j} &= T_{s_j}T_{s_i}  
		&& \text{for }|i-j|>1,
		\\
		T_{s_i}T_{s_j}T_{s_i} &= T_{s_j}T_{s_i}T_{s_j}  
		&& \text{for }|i-j|=1,
		\\
		(T_{s_i}+1)(&T_{s_i}-q)= 0 && \text{for all }i.
	\end{align*}
	Thus, if we write $T_i=T_{s_i}$, then $\H_{\S_n}$
	becomes exactly the algebra $\H_n$ defined in the introduction.
\end{example}

Let $w\in W$, and let $w=s_1\cdots s_r$ be any reduced expression for 
$w$.  Then by Matsumoto's Theorem~\ref{theorem-matsumoto}, the quantity 
\[
	T_w = T_{s_1}T_{s_2}\cdots T_{s_r}
\]
depends only on $w$ and not on the reduced expression.
Suppose that $u,v\in W$ satisfy $\ell(uv)=\ell(u)+\ell(v)$.
Then one can obtain a reduced expression for $uv$ by combining
reduced expressions for $u$ and $v$.  We therefore obtain:
\[
	T_uT_v = T_{uv}
	\text{ for }u,v\in W\text{ such that }
	\ell(uv)=\ell(u)+\ell(v)
\]
The significance of the elements $T_w$ is the following central
result, for which see chapter IV, section~2, exercise~23 of~\cite{Bourbaki},
or Theorem~4.4.6 of~\cite{GeckPfeiffer}, 
or Theorem~1.13 of~\cite{Mathas} for the case $\H_W=\H_n$.

\begin{theorem}[Basis theorem]\label{theorem-basis}
	The elements $T_w$ for $w\in W$ form a basis for $\H_W$
	as an $R$-module, called the \emph{standard basis}.
\end{theorem}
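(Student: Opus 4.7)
The plan is to prove the theorem in two steps: first show that the elements $T_w$ for $w\in W$ span $\H_W$ as an $R$-module, and then exhibit an $\H_W$-module structure on the free $R$-module $M=\bigoplus_{w\in W} R\,e_w$ in which $T_w\cdot e_e = e_w$, forcing the $T_w$ to be linearly independent. Note that the very definition of $T_w$ already requires independence of the reduced expression chosen, which follows from Matsumoto's theorem~\ref{theorem-matsumoto} combined with the braid relations in $\H_W$.

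For spanning, I would argue by induction on the length $k$ of a monomial $T_{s_1}\cdots T_{s_k}$. If the word $s_1\cdots s_k$ is reduced, the monomial equals $T_w$ for $w=s_1\cdots s_k$, by iterating the identity $T_uT_v=T_{uv}$ valid whenever $\ell(uv)=\ell(u)+\ell(v)$. If the word is not reduced, then by Theorem~\ref{theorem-word} a sequence of $M$-operations shortens it: braid moves leave the monomial in $\H_W$ unchanged (by the defining braid relations), while an $ss$-deletion is handled by rewriting the quadratic relation as $T_s^2=(q-1)T_s+q$, which expresses the monomial as an $R$-linear combination of two strictly shorter monomials. Induction then places every product of generators in the $R$-span of $\{T_w\}_{w\in W}$, and hence so places all of $\H_W$.

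For linear independence, I would define operators $\tau_s$ on $M$ by
\[
    \tau_s(e_w)=\begin{cases} e_{sw} & \text{if }\ell(sw)>\ell(w),\\ q\,e_{sw}+(q-1)e_w & \text{if }\ell(sw)<\ell(w).\end{cases}
\]
A direct case split on the sign of $\ell(sw)-\ell(w)$ verifies the quadratic relation $(\tau_s+1)(\tau_s-q)=0$, and the formula $\tau_s(e_w)=e_{sw}$ when $\ell(sw)>\ell(w)$ gives by induction on $\ell(w)$ the identity $T_w\cdot e_e=e_w$, once $M$ is known to be an $\H_W$-module. The main obstacle is establishing the braid relations for the $\tau_s$. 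To handle them I would, for each pair $J=\{s,t\}\subseteq S$, use Theorem~\ref{theorem-distinguished} to write every $w\in W$ uniquely as $w=vx$ with $v\in W_J$ and $x\in X_J$, and with lengths adding. The subspace $M_x=\bigoplus_{v\in W_J} R\,e_{vx}$ is then $\tau_s$- and $\tau_t$-invariant, and the restrictions of $\tau_s,\tau_t$ to $M_x$ match, vector by vector, the analogous construction for the dihedral Coxeter system $(W_J,J)$ acting on its regular module. This reduces the braid relation to the dihedral case $m_{st}=m<\infty$ (there is nothing to check when $m_{st}=\infty$), where it is a finite, explicit computation on $2m$ basis vectors that completes the proof.
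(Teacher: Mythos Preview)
The paper does not prove this theorem; it records it as background with references to Bourbaki, Geck--Pfeiffer, and Mathas.  Your outline is precisely the standard argument found in those sources: spanning by induction on monomial length using $T_s^2=(q-1)T_s+q$ together with Theorem~\ref{theorem-word}, and linear independence via operators $\tau_s$ on the free module $\bigoplus_{w\in W} R\,e_w$, with the braid relations reduced to the rank-two case through the parabolic decomposition of Theorem~\ref{theorem-distinguished}.  The reduction is sound because for $v\in W_J$, $x\in X_J$, and $s\in J$ one has $\ell(svx)-\ell(vx)=\ell(sv)-\ell(v)$, so that $M_x$ is $\tau_s,\tau_t$-invariant and carries a copy of the dihedral construction.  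The one step you leave as a black box is the dihedral braid relation itself; for general finite $m=m_{st}$ this is a uniform verification over the $2m$ elements rather than a single fixed computation, and it does require some care, but it is carried out explicitly in the references the paper cites.
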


And we have the following consequence, which is extremely
important for the present paper.

\begin{proposition}\label{proposition-basis}
	Let $(W,S)$ be a Coxeter system and let
	$J\subseteq S$.  Then $\H_W$ is free as a left $\H_{W_J}$-module
	with basis $\{T_x\mid x\in X_J\}$.
	In particular, $\t\otimes_{\H_{W_J}}\H_W$
	is free with basis $\{1\otimes T_x\mid x\in X_J\}$.

	Similarly, $\H_W$ is free as a right $\H_{W_J}$-module
	with basis $\{T_x\mid x\in X_J^{-1}\}$,
	and $\H_W\otimes_{\H_{W_J}}\t$ is free with 
	basis $\{T_x\otimes 1\mid x\in X_J^{-1}\}$.
\end{proposition}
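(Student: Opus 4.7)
The plan is to combine the basis theorem for $\H_W$ (Theorem~\ref{theorem-basis}) with the factorisation of elements of $W$ through distinguished coset representatives (Theorem~\ref{theorem-distinguished}), using the fact that $T_u T_v = T_{uv}$ whenever $\ell(uv) = \ell(u) + \ell(v)$.

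First I would treat the left-module statement. By part~(2) of Theorem~\ref{theorem-distinguished}, every $w \in W$ admits a unique factorisation $w = vx$ with $v \in W_J$ and $x \in X_J$, and by part~(1) this factorisation satisfies $\ell(vx) = \ell(v) + \ell(x)$. Combining reduced expressions then yields $T_w = T_v T_x$. Since $\{T_w : w \in W\}$ is an $R$-basis of $\H_W$ and $\{T_v : v \in W_J\}$ is an $R$-basis of $\H_{W_J}$, the uniqueness of the factorisation gives
\[
    \H_W \;=\; \bigoplus_{x \in X_J} \H_{W_J}\, T_x
\]
as $R$-modules, with each summand being the free rank-one left $\H_{W_J}$-module generated by $T_x$ (since the $T_v T_x$ for $v \in W_J$ are $R$-linearly independent). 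Hence $\H_W$ is free as a left $\H_{W_J}$-module on the basis $\{T_x : x \in X_J\}$.

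The consequence for $\t \otimes_{\H_{W_J}} \H_W$ is then immediate: tensoring the direct sum decomposition with $\t$ over $\H_{W_J}$ gives $\t \otimes_{\H_{W_J}} \H_W \cong \bigoplus_{x \in X_J} \t$, free as an $R$-module on $\{1 \otimes T_x : x \in X_J\}$. The right-module statement is entirely symmetric: apply the analogue of Theorem~\ref{theorem-distinguished} to the right cosets $W / W_J$, for which the distinguished representatives form the set $X_J^{-1}$, writing each $w$ uniquely as $w = xv$ with $x \in X_J^{-1}$ and $v \in W_J$ and with lengths adding, so that $T_w = T_x T_v$; the same argument then yields $\H_W = \bigoplus_{x \in X_J^{-1}} T_x\, \H_{W_J}$ and the freeness of $\H_W \otimes_{\H_{W_J}} \t$ on $\{T_x \otimes 1\}$.

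There is no serious obstacle: the whole argument is really just bookkeeping with the basis theorem once the key multiplicative identity $T_u T_v = T_{uv}$ (under additivity of lengths) is in hand. The only place where one must be a little careful is in verifying that the $R$-submodule $\H_{W_J} T_x$ really is a free left $\H_{W_J}$-module of rank one, but this drops out of the fact that the collection $\{T_v T_x : v \in W_J\} = \{T_{vx} : v \in W_J\}$ is a subset of the $R$-basis $\{T_w : w \in W\}$ and is therefore $R$-linearly independent.
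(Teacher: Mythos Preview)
Your proof is correct and follows exactly the same approach as the paper: combine the unique factorisation $w = vx$ with $v\in W_J$, $x\in X_J$ and lengths adding (Theorem~\ref{theorem-distinguished}) with the basis theorem (Theorem~\ref{theorem-basis}) to get $T_w = T_v T_x$, and then read off the direct sum decomposition. The paper simply states this more tersely and refers to \cite[4.4.7]{GeckPfeiffer}.
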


This follows by combining Theorems~\ref{theorem-distinguished}
and~\ref{theorem-basis}.  The point is that there is a bijection
$W_J\times X_J\to W$, $(v,x)\mapsto vx$ satisfying $\ell(vx)=\ell(v)+\ell(x)$
for every $(v,x)\in W_J\times X_J$, so that $T_{vx}=T_vT_x$.
See~\cite[4.4.7]{GeckPfeiffer}.

\section{Symmetric groups and the complex of injective words}
\label{section-injective-words}

In this section, we will recall the definition of the complex
of injective words, and we will give a proof that it is
highly acyclic.  This result is originally due to 
Farmer~\cite{Farmer}, and has since been proved 
in different ways by many authors,
including Bj\"orner-Wachs~\cite{BjornerWachs},
Kerz~\cite{Kerz}, and Randal-Williams~\cite{RandalWilliamsConfig}.
The approach that we present here is closest to that of Kerz,
but tailored to our later extension to Iwahori-Hecke algebras.
Throughout the section we fix a commutative ring $R$.

If $A$ is a set, then an \emph{injective word} on $A$ is an ordered
tuple $(a_0,\ldots,a_r)$ of elements of $A$ such that no element
appears more than once.  We allow the empty word $()$.

\begin{definition}[The complex of injective words]
\label{definition-cn}
	Let $n\geqslant 0$.
	The \emph{complex of injective words} $\C(n)$
	is the chain complex,
	concentrated in degrees $ -1\leqslant r\leqslant n-1$, 
	that in degree $r$ is the $R$-module with basis 
	consisting of the injective words 
	$(a_0,\ldots,a_r)$ of length $(r+1)$ on the set $\{1,\ldots,n\}$.
	The differential $\partial^r\colon\C(n)_r\to\C(n)_{r-1}$
	is defined to be given by the alternating sum
	\[
		\partial^r(a_0,\ldots,a_r)
		=
		\sum_{j=0}^r(-1)^j(a_0,\ldots,\widehat{a_j},\ldots,a_r).
	\]
	We regard $\C(n)$ as a chain complex of $\S_n$-modules
	by allowing $\S_n$ to act on the letters of a word in the evident way.
	Note that $\C(n)_{-1}$ is a copy of $R$ generated by the empty
	word $()$.
\end{definition}

\begin{remark}
	The complex of injective words appears in many forms,
	for example as the realisation of a poset in~\cite{Farmer}
	and~\cite{BjornerWachs}, a chain complex in~\cite{Kerz},
	and as a semisimplicial set in~\cite{RandalWilliamsConfig}.  
	We are working in the linear setting of $R\S_n$-modules,
	and so our complex is a chain complex of $R\S_n$-modules.
\end{remark}

\begin{note}
	Throughout the paper we will use notation like
	$\partial^r$ in Definition~\ref{definition-cn},
	where the superscript indicates the degree in which
	the differential originates.
	This causes visual clutter and is sometimes
	extraneous, but will be extremely helpful 
	later on in keeping track of degrees.
\end{note}

\begin{theorem}[{Farmer~\cite{Farmer}}]\label{theorem-farmer}
	$H_d(\C(n))=0$ for $d\leqslant n-2$.
\end{theorem}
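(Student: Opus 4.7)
The plan is to induct on $n$; the base cases $n \leq 1$ are immediate since the range $d \leq n-2$ is either empty or lies below the support of the complex. For the inductive step, I assume the conclusion for all $m < n$.

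I would filter $\C(n)$ by how the largest letter $n$ appears, since the face maps of an injective word either preserve the position of $n$ or decrement it by one, giving a clean subcomplex structure. Let $F_{-1} = \C(n-1)$, the subcomplex of words avoiding $n$, and for $0 \leq j \leq n-1$ let $F_j$ be $F_{-1}$ together with all words in which $n$ occupies some position $\leq j$. A direct check shows each $F_j$ is a subcomplex and $F_{n-1} = \C(n)$. The quotient $F_j/F_{j-1}$ is spanned by the words in which $n$ sits in position exactly $j$; grouping these by their prefix $(a_0, \ldots, a_{j-1})$ (an injective $j$-word in $\{1, \ldots, n-1\}$) decomposes $F_j/F_{j-1}$, up to sign, as a direct sum indexed by such prefixes of copies of $\C(n-1-j)$ shifted upward by $j+1$ in degree. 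This is the same kind of identification of filtration quotients in terms of $\C(m)$ for smaller $m$ that the introduction promises for $\D(n)$. The inductive hypothesis then gives $H_r(F_j/F_{j-1}) = 0$ for $r \leq n-2$ and every $j \geq 0$.

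The inductive climb up the filtration needs care at the bottom. At the step $F_{-1} \hookrightarrow F_0$ the quotient $F_0/F_{-1}$ is a single copy of $\C(n-1)[1]$, so its homology is a shift of $H_\ast(\C(n-1))$, which is only known by induction to vanish in degrees $r \leq n-3$, one short of what we need. The rescue is that the connecting map $\delta \colon H_{r+1}(F_0/F_{-1}) \to H_r(F_{-1})$ can be written down on the nose from the formula $\partial(n,a_1,\ldots,a_r) = (a_1,\ldots,a_r) + \sum_{i \geq 1}(-1)^i (n,a_1,\ldots,\widehat{a_i},\ldots,a_r)$: the first summand is the entire contribution to $F_{-1}$, and under the natural identifications $\delta$ becomes the identity on $H_r(\C(n-1))$ in every degree $r$. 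Exactness of the long exact sequence then collapses it completely and forces $H_r(F_0) = 0$ for all $r$. For $j \geq 1$ the long exact sequence of the pair $(F_j, F_{j-1})$ combines $H_r(F_{j-1}) = 0$ (induction in $j$) with $H_r(F_j/F_{j-1}) = 0$ in the range $r \leq n-2$ to give $H_r(F_j) = 0$ in the same range, and taking $j = n-1$ yields the theorem.

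The main obstacle is exactly this boundary case. Without the observation that the first connecting map is an honest identity, the inductive hypothesis on $\C(n-1)$ is one degree shy of what Theorem \ref{theorem-farmer} requires, and a naive argument would produce only the weaker bound $d \leq n-3$. The gain of one degree at the bottom of the filtration is precisely what sharpens the range to $d \leq n-2$, and (judging from the introduction) tracking an analogous phenomenon, burdened additionally with powers of $q$, will be the heart of the argument for $\D(n)$.
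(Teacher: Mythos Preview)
Your proof is correct and follows essentially the same approach as the paper: both filter $\C(n)$ by the position of the letter $n$ (you from the front, the paper from the back), identify the bottom filtration piece as contractible and the higher quotients as suspensions of $\C(m)$ for $m<n$, and conclude by induction. The only cosmetic difference is that you phrase the contractibility of $F_0$ via the connecting map being the identity, whereas the paper packages the same observation as an isomorphism $F_0\cong C(\C(n-1))$ with the cone --- a formulation chosen because it exports more cleanly to the Iwahori--Hecke setting.
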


In order to prove this theorem we will 
define a filtration of $\C(n)$ by looking at the position of
the letter $n$.  This is essentially the technique used by Kerz~\cite{Kerz}.

\begin{definition}[The filtration of $\C(n)$]
\label{definition-symmetric-filtration}
	Let $0\leqslant p\leqslant n-1$.
	Define $F_p\subseteq \C(n)$ to be the subcomplex of $\C(n)$
	spanned by all words for which the letter
	$n$ appears in the last $(p+1)$ places, or not at all.
	Thus we obtain a filtration
	\[
		F_0\subseteq F_1\subseteq\cdots \subseteq F_{n-1}=\C(n).
	\]
	Observe that the $F_p$ are not submodules with respect
	to the $\S_n$ action, since that can change the position of $n$,
	but that they are submodules with respect to the restricted
	action of $\S_{n-1}$.
\end{definition}

The following notation fixes our conventions for cones and suspensions
of chain complexes.  The conventions are chosen so as to make the
subsequent parts of the proof as direct as possible.

\begin{definition}\label{definition-cones-suspensions}
	Let $X$ be a chain complex with differentials $d^r_X$.
	The \emph{cone} on $X$, denoted $CX$, is the chain complex defined
	by
	\[
		(CX)_r=X_r\oplus X_{r-1}	
	\]
	with
	\[
		d_{CX}^r\colon (CX)_r\longrightarrow (CX)_{r-1}
	\]
	defined by $d_{CX}^r(x,y)= (d_X^r(x)+(-1)^r y,d_X^{r-1}(y))$.
	The \emph{suspension} $\Sigma X$ is the chain complex defined
	by
	\[
		(\Sigma X)_r = X_{r-1}
	\]
	with
	\[
		d_{\Sigma X}^r\colon(\Sigma X)_r
		\longrightarrow
		(\Sigma X)_{r-1}
	\]
	defined by $d_{\Sigma X}^r = d_X^{r-1}$.
\end{definition}

\begin{lemma}\label{lemma-fzero}
	There is an isomorphism
	\[
		C(\C(n-1))\xrightarrow{\ \cong\ }F_0
	\]
	of chain complexes of $R\S_n$-modules.
\end{lemma}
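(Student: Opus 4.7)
The plan is to exhibit an explicit basis bijection matching the cone decomposition, and then check that the differentials correspond. First I would partition the standard basis of $F_0$ in degree $r$ into two types: injective words of length $r{+}1$ on $\{1,\ldots,n-1\}$ (those that omit the letter $n$), and injective words of length $r{+}1$ on $\{1,\ldots,n\}$ whose last entry is $n$ (equivalently, an injective word $(a_0,\ldots,a_{r-1})$ of length $r$ on $\{1,\ldots,n-1\}$ with $n$ appended). This identification exhibits $F_0$ in degree $r$ as $\C(n-1)_r \oplus \C(n-1)_{r-1}$, which is exactly $C(\C(n-1))_r$.

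Next I would verify that this identification intertwines the differentials by checking the two types of basis element separately. For a word $(a_0,\ldots,a_r)$ that does not contain $n$, the boundary formula in $\C(n)$ produces only words not containing $n$, which matches $d^r_{\C(n-1)}$ on the first summand of the cone. For a word $(a_0,\ldots,a_{r-1},n)$ ending in $n$, the alternating sum splits cleanly: the terms that omit $a_j$ for $j<r$ give words of length $r$ that still end in $n$, matching $d^{r-1}_{\C(n-1)}$ on the second summand; while the single term that omits the last letter $n$ contributes $(-1)^r(a_0,\ldots,a_{r-1})$, a word not containing $n$, matching the shift term $(-1)^r y$ in $d^r_{CX}(x,y)=(d^r_X(x)+(-1)^r y,d^{r-1}_X(y))$ from Definition~\ref{definition-cones-suspensions}.

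Equivariance is then automatic, since $\S_{n-1}\subseteq \S_n$ fixes the letter $n$, so both the injection of $\C(n-1)$ into $F_0$ and the operation of appending $n$ commute with the action of $\S_{n-1}$ on the remaining letters. (The statement is naturally read over $R\S_{n-1}$, in line with the remark after Definition~\ref{definition-symmetric-filtration} that $F_0$ is not an $\S_n$-submodule.) I do not anticipate a real obstacle here: the only point requiring care is matching sign conventions, and the sign $(-1)^r$ arising from deleting the final entry of a word of length $r{+}1$ is exactly what the cone differential prescribes.
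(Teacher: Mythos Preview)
Your proposal is correct and takes essentially the same approach as the paper: the paper defines the map $C(\C(n-1))\to F_0$ on generators by $((x_0,\ldots,x_r),0)\mapsto(x_0,\ldots,x_r)$ and $(0,(y_0,\ldots,y_{r-1}))\mapsto(y_0,\ldots,y_{r-1},n)$, observes it is a bijection on bases, and declares the chain-map check ``straightforward'' --- you have simply written that check out. You are also right that the equivariance is over $R\S_{n-1}$ rather than $R\S_n$; the statement as printed is a slip, consistent with the remark after Definition~\ref{definition-symmetric-filtration}.
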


\begin{proof}
	$F_0$ is the span of all words in which either $n$ does not appear,
	or appears in the final position.
	We define a map
	\[
		C(\C(n-1))\longrightarrow F_0
	\]
	in degree $r$ by
	\[
		((x_0,\ldots,x_r),0)
		\longmapsto
		(x_0,\ldots,x_r),
		\qquad
		(0,(y_0,\ldots,y_{r-1}))
		\longmapsto
		(y_0,\ldots,y_{r-1},n),
	\]
	for $(x_0,\ldots,x_r)\in\C(n-1)_r$
	and $(y_0,\ldots,y_{r-1})\in\C(n-1)_{r-1}$.
	Since $F_0$ consists of words in which $n$ appears in the final
	position or not at all, this is an isomorphism,
	and it is straightforward to check that it 
	commutes with the differentials.
\end{proof}

\begin{lemma}\label{lemma-fp}
	Let $1\leqslant p\leqslant n-1$.
	There is an isomorphism 
	\[
		R\S_{n-1}\otimes_{R\S_{n-p-1}}\Sigma^{p+1}\C(n-p-1)
		\xrightarrow{\ \cong\ }
		F_p/F_{p-1}
	\]
	of chain complexes of $R\S_{n-1}$-modules.
	In particular, as a chain complex of $R$-modules,
	$F_p/F_{p-1}$ is isomorphic to 
	a direct sum of finitely many copies of $\Sigma^{p+1}\C(n-p-1)$.
\end{lemma}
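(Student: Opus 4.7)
The plan is to write down an explicit map in the claimed direction, verify well-definedness and $R\S_{n-1}$-equivariance, check the chain-map property, and then show it is a bijection on $R$-bases.

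First I would identify $F_p/F_{p-1}$ concretely: in degree $r$ its $R$-basis is the set of injective words $(b_0,\ldots,b_r)$ in which $n$ appears at position $r-p$ (equivalently, $n$ is the $(p+1)$-st letter from the end). This forces $p\leqslant r\leqslant n-1$, which matches precisely the degrees in which $\Sigma^{p+1}\C(n-p-1)$ is nonzero.

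Next I would define the map by
\[
\sigma \otimes (a_0,\ldots,a_{r-p-1})
\ \longmapsto\
(\sigma(a_0),\ldots,\sigma(a_{r-p-1}),\, n,\, \sigma(n-p),\ldots,\sigma(n-1))
\]
for $\sigma\in\S_{n-1}$ and $(a_0,\ldots,a_{r-p-1})$ an injective word on $\{1,\ldots,n-p-1\}$. Here $\S_{n-p-1}$ is viewed as the subgroup of $\S_{n-1}$ that fixes $\{n-p,\ldots,n-1\}$ pointwise, so that for any $\tau\in\S_{n-p-1}$ one has $\sigma\tau(n-p+i)=\sigma(n-p+i)$; this is exactly what is needed for the map to descend to the balanced tensor product. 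The $R\S_{n-1}$-equivariance is then immediate from the left action on $\sigma$, since every $\rho\in\S_{n-1}$ fixes $n$.

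The chain-map verification hinges on a single observation: in the quotient $F_p/F_{p-1}$, the alternating sum of deletions applied to the image word loses every term that deletes a letter at position $r-p$ or later. Indeed, deleting $n$ itself produces a word with no $n$, which lies in $F_0\subseteq F_{p-1}$; while deleting any letter after $n$ produces a word of length $r$ in which $n$ still occupies position $r-p$, placing the result in $F_{p-1}$. The surviving contributions come from deleting one of the first $r-p$ letters, and they match, via the convention $d^r_{\Sigma^{p+1}X}=d^{r-p-1}_X$, the image of the $\C(n-p-1)$-differential applied to $(a_0,\ldots,a_{r-p-1})$.

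Finally I would check the map is an isomorphism of underlying $R$-modules by exhibiting bases on both sides. Cosets of $\S_{n-p-1}$ in $\S_{n-1}$ are in bijection with ordered $p$-tuples of distinct elements of $\{1,\ldots,n-1\}$, via $\sigma\mapsto(\sigma(n-p),\ldots,\sigma(n-1))$. Choosing representatives writes the source as a finite direct sum of copies of $\Sigma^{p+1}\C(n-p-1)$ (giving the final sentence of the lemma), and the explicit map sends $\sigma\otimes(a_0,\ldots,a_{r-p-1})$ to the unique word whose last $p$ letters recover the coset and whose first $r-p$ letters, pulled back by $\sigma^{-1}$, recover $(a_0,\ldots,a_{r-p-1})$. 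I expect the main obstacle to be the bookkeeping around positions and indices; the conceptual crux is the observation about which differential terms vanish in the quotient, after which everything reduces to a routine check.
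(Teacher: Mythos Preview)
Your proposal is correct and follows essentially the same approach as the paper: you define the same explicit map $\sigma\otimes(a_0,\ldots,a_{r-p-1})\mapsto\sigma(a_0,\ldots,a_{r-p-1},n,n-p,\ldots,n-1)$, verify the chain-map property via the same observation that the last $p+1$ terms of the differential land in $F_{p-1}$, and prove bijectivity via coset representatives in exactly the same way.
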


\begin{proof}
	Let $-1\leqslant r\leqslant n-p-2$.
	Then in degree $(p+1)+r$, 
	$F_p$ is the span of all words with $n$ in the last $(p+1)$
	places or not at all, while $F_{p-1}$ is the span of all
	words with $n$ in the last $p$ places or not at all,
	so that $F_p/F_{p-1}$ has a basis consisting
	of all injective words of the form $(y_0,\ldots,y_r,n,z_1,\ldots,z_p)$,
	i.e.~with $n$ in precisely the $(p+1)$-st place from
	the end.
	The effect of the boundary map $\partial^{(p+1)+r}$ 
	on such a word is	
	\[
		(y_0,\ldots,y_r,n,z_1,\ldots,z_p)
		\longmapsto
		\sum_{j=0}^r(-1)^j(y_0,\ldots,\widehat{y_j},\ldots,y_r,
		n,z_1,\ldots,z_p).
	\]
	Here the last $(p+1)$ summands of $\partial^{(p+1)+r}$,
	in which one of the last $(p+1)$ letters is deleted, are not
	present because any resulting word would lie in $F_{p-1}$.
	So the differential affects the segment $(y_0,\ldots,y_r)$ 
	in the usual way, while leaving the remaining segment  unchanged.
	
	We now define our map in degree $(p+1)+r$, 
	for $-1\leqslant r\leqslant n-p-2$, by 
	\[
		\sigma\otimes (x_0,\ldots,x_r)
		\longmapsto
		\sigma(x_0,\ldots,x_r,n,n-p,\ldots,n-1)
	\]
	for $\sigma\in\S_{n-1}$ and 
	$(x_0,\ldots,x_r)\in\Sigma^{p+1}\C(n-p-1)_{(p+1)+r}
	=\C(n-p-1)_r$,
	where the final $p$ terms increase from $n-p$ to $n-1$.
	This is well-defined, and
	the fact that it is a chain map follows quickly from the 
	computation above.
	To see that it is an isomorphism we choose, for each injective word
	$z=(z_1,\ldots,z_p)$ on $\{1,\ldots,n-1\}$, an element
	$\sigma_z\in\S_{n-1}$ for which $z=\sigma_z(n-p,\ldots,n-1)$.
	Then the $\sigma_z$ are a set of representatives for the cosets
	$\S_{n-1}/\S_{n-p-1}$, so that every $\sigma\otimes (x_0,\ldots,x_r)$
	in $R\S_{n-1}\otimes_{R\S_{n-p-1}}\Sigma^{p+1}\C(n-p-1)_r$
	can be rewritten in the form 
	$\sigma_z\otimes(x'_0,\ldots,x'_r)$
	for a unique choice of $z$ and $(x'_0,\ldots,x'_r)$.
	An inverse to our map can then be given by
	the rule
	\[
		(x_0,\ldots,x_r,n,z_1,\ldots,z_p)
		\longmapsto \sigma_z\otimes\sigma_z^{-1}(x_0,\ldots,x_r).
		\qedhere
	\]
\end{proof}

\begin{proof}[Proof of Theorem~\ref{theorem-farmer}]
	This is proved by induction on $n\geqslant 0$.
	In the case $n=0$,  $\C(0)$ consists only of a copy of $R$ in
	degree $-1=n-1$, and its homology has the same description.
	Now take $n>0$ and suppose that the claim holds for all smaller values
	of $n$.  Then $\C(n)$ has filtration 
	$F_0\subseteq\cdots\subseteq F_{n-1}$.
	The subcomplex $F_0\cong C(\C(n-1))$ is chain-contractible
	by Lemma~\ref{lemma-fzero}.
	And the induction hypothesis tells us that each
	$\C(n-p-1)$ has zero homology in all degrees up to and including
	$(n-p-1)-2$, so that 
	$R\S_{n-1}\otimes_{R\S_{n-p-1}}\Sigma^{p+1}\C(n-p-1))$
	has zero homology in degrees up to and including $(n-p-1)-2+(p+1)=n-2$,
	so that by Lemma~\ref{lemma-fp} the same is true of $F_p/F_{p-1}$.
	Since $F_0$ and all $F_p/F_{p-1}$ have vanishing homology
	in the stated range,
	the same follows for $\C(n)$ itself.
\end{proof}

\section{$\C(n)$ via the group ring and Coxeter generators}
\label{section-translation}

In the last section we recalled the complex of injective words,
and we gave a proof that its homology vanishes up to and including degree
$n-2$.
These are key ingredients in
the proof of homological stability for the symmetric groups, and 
in order to prove stability for the Iwahori-Hecke algebras
we must extend them to that setting.
However, the Iwahori-Hecke algebra $\H_n$
is obtained by taking the presentation of
the group ring $R\S_n$, 
where $\S_n$ is regarded as the group ring of a Coxeter group,
and deforming one of the relations, so that we must 
begin by recasting the complex of injective words in the same terms.
In this section we will rephrase the chain complex
$\C(n)$, the filtration
$F_0\subseteq\cdots\subseteq F_{n-1}=\C(n)$, and the isomorphisms
$F_0\cong C(\C(n-1))$ and 
$F_p/F_{p-1}\cong R\S_{n-1}\otimes_{R\S_{n-p-1}}\Sigma^{p+1}\C(n-p-1)$
in terms of the group ring of $\S_n$, regarded as a Coxeter group.
The result will be a new chain complex
$\C'(n)$, with a filtration
$F_0'\subseteq\cdots\subseteq F_{n-1}'=\C(n)$, and isomorphisms
$F_0'\cong C(\C'(n-1))$ and 
$F_p'/F_{p-1}'\cong R\S_{n-1}\otimes_{R\S_{n-p-1}}\Sigma^{p+1}\C'(n-p-1)$,
all identified with the originals under an appropriate isomorphism,
and all described only in terms of group rings and Coxeter generators.
This process of `algebra-ising' the existing construction
is a key step in the work of this paper, 
serving as motivation and explanation for everything that follows,
and allowing us to generalise directly to Iwahori-Hecke algebras.

The account that follows will make repeated use of a particular family of
elements of $R\S_n$, which we define here.

\begin{definition}[The elements $s_{ba}$]\label{definition-sba}
	Given $n\geqslant b\geqslant a\geqslant 1$, we define
	\[s_{ba} = s_{b-1}s_{b-2}\cdots s_a.\]
	Note that the indices always decrease from left to right,
	so that if $b=a$, then the product is empty and $s_{ba}$ is the
	unit element.
	When necessary for clarity, we will separate the 
	indices with a comma, i.e.~$s_{b,a}=s_{ba}$.
	Note that $s_{a+1,a}=s_a$. 
\end{definition}

\begin{remark}\label{remark-sab-permutation}
	The element $s_{ba}$ can be described in cycle notation as
	\begin{eqnarray*}
		s_{ba}
		&=&s_{b-1}\cdots s_{a}\\
		&=& (b\, b-1)(b-1\, b-2)\cdots(a+1\,a)\\
		&=& (b\,b-1\,\cdots\,a),
	\end{eqnarray*}
	and in two-line notation as
	\[
		s_{ba}
		=
		\begin{pmatrix}
			1 & \cdots & a-1
			&
			a & a+1 & \cdots& b
			&
			b+1 & \cdots & n
			\\
			1 & \cdots & a-1
			&
			b & a & \cdots & b-1
			&
			b+1 & \cdots & n
		\end{pmatrix}.
	\]
	Thus $s_{ba}$ is the permutation that decreases
	each of $a+1,\ldots,b$ by $1$, and that sends $a$ to $b$.
	The elements $s_{ba}$ will be used to 
	encode deletion of letters, 
	and to move the letter $n$ into particular positions required by
	the filtration.
	See Remarks~\ref{remark-deleting-letters} 
	and~\ref{remark-moving-letters} below.
\end{remark}

Before continuing, it will be useful to
explain how the complex of injective words 
$\C(n)$ fits into the theory of augmented semi-simplicial objects.
An \emph{augmented semi-simplicial object} $A_\bullet$ in some category
is defined to be a sequence of objects $A_{-1},A_0,A_1,\ldots $
in that category, together with \emph{face maps}
$\partial^r_j\colon A_r\to A_{r-1}$ for $r\geqslant 0$
and $j=0,\ldots,r$, which satisfy the relations
$\partial^{r-1}_i\partial^r_j
=
\partial^{r-1}_{j-1}\partial^r_i$
for $r\geqslant 1$ and $0\leqslant i<j\leqslant r$.
If $A_\bullet$ is an augmented semi-simplicial object $A_\bullet$
in an \emph{abelian} category, then we can associate to it a
chain complex $A_\ast$ given in degree $r$ by $A_r$,
and with differential $\partial^r=\sum_{j=0}^r(-1)^j\partial^r_j$.

Recall that $\C(n)$ 
in degree $r$ is the free $R$-module with basis the
injective words on the set $\{1,\ldots,n\}$,
and that the differential
$\partial^r\colon\C(n)_r\to\C(n)_{r-1}$ sends 
$(a_0,\ldots,a_r)$ to the alternating sum 
$\sum_{j=0}^r(-1)^j(a_0,\ldots,\widehat{a_j},\ldots,a_r)$.
Thus we may write $\partial^r=\sum_{j=0}^r(-1)^j\partial^r_j$,
where $\partial^r_j$ is defined by
$\partial^r_j(a_0,\ldots,a_r)=(a_0,\ldots,\widehat{a_j},\ldots,a_r)$.
This makes $\C(n)$ into the chain complex associated to the
{augmented semi-simplicial $R\S_n$-module} $\C(n)_\bullet$
with {face maps} $\partial^r_j$.

Augmented semi-simplicial objects give us an organised way
of splitting differentials into their component summands,
and this will be useful throughout the rest of the paper.
We will point out such semi-simplicial structures where they
occur.

We are now in a position to define the complex $\C'(n)$,
which is our rephrasing of $\C(n)$ in terms of the group algebra of 
$\S_n$, regarded as a Coxeter group.

\begin{definition}[The chain complex $\C'(n)$]\label{definition-cdashn}
	Let $\C'(n)$ denote the chain complex
	of $R\S_n$-modules, concentrated in degrees $r=-1,\ldots,n-1$,
	which is given in degree $r$ by
	\[	
	\C'(n)_r=R\S_n\otimes_{R\S_{n-r-1}}\t,
	\]
	and whose differential $\partial^r\colon\C'(n)_r\to\C'(n)_{r-1}$
	is defined by
	\[
		\partial^r = \sum_{j=0}^r (-1)^j\partial^r_j,
		\qquad
		\partial^r_j(\sigma\otimes 1)
		=
		\sigma s_{n-r+j,n-r}\otimes 1.
	\]
	Thus $\C'(n)$ is the chain complex associated to an augmented
	semi-simplicial $R\S_n$-module with face maps $\partial^r_j$.
\end{definition}

\begin{remark}\label{remark-similarity}
	Definition~\ref{definition-cdashn} 
	is similar to several that already appear in the literature,
	for example
	Definition~34 of~\cite{HepworthCoxeter},
	Definition~7.5 of~\cite{Krannich} and Definition~7.1 of~\cite{Boyd}.
\end{remark}

\begin{definition}[The isomorphism $\Theta$]
	Define $\Theta\colon\C'(n)\to\C(n)$ 
	in degree $r$ by
	\[
		\Theta_r(\sigma\otimes 1)
		=\sigma(n-r,\ldots,n)
		=(\sigma(n-r),\ldots,\sigma(n)).
	\]
\end{definition}

\begin{proposition}\label{proposition-Theta}
	$\Theta$ is an isomorphism of chain complexes of $R\S_n$-modules.
\end{proposition}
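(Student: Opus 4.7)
The plan is to verify four things in turn: that $\Theta_r$ is well-defined on the balanced tensor product, that it is $R\S_n$-linear, that it is bijective in each degree, and that $\Theta$ commutes with the differentials. The first three are essentially formal; the only substantive step is the chain-map check, and for that the key observation is the explicit permutation description of $s_{ba}$ recorded in Remark~\ref{remark-sab-permutation}.

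For well-definedness, observe that $\S_{n-r-1}$ acts on $\{1,\ldots,n\}$ fixing each of $n-r,n-r+1,\ldots,n$ pointwise. Hence for any $\tau\in\S_{n-r-1}$ we have $\sigma\tau(n-r+i)=\sigma(n-r+i)$ for $i=0,\ldots,r$, so $\Theta_r(\sigma\tau\otimes 1)=\Theta_r(\sigma\otimes 1)$. This is exactly the relation we need to respect, so $\Theta_r$ descends to the tensor product over $R\S_{n-r-1}$. The $R\S_n$-linearity $\Theta_r(\alpha\sigma\otimes 1)=\alpha\cdot\Theta_r(\sigma\otimes 1)$ is immediate from the definition.

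For bijectivity, a free $R$-basis of $\C'(n)_r$ is given by $\{\sigma\otimes 1\mid \sigma\in X\}$, where $X$ is any set of representatives for the left cosets $\S_n/\S_{n-r-1}$. Two elements of $\S_n$ lie in the same such coset if and only if they agree on $\{n-r,\ldots,n\}$, so these cosets biject with the injective words of length $r+1$ on $\{1,\ldots,n\}$; under this correspondence $\Theta_r$ carries a basis to a basis and is therefore an isomorphism of $R$-modules (hence of $R\S_n$-modules, by the previous step).

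Finally, for the chain-map property, by the alternating-sum form of $\partial^r$ it suffices to check $\Theta_{r-1}\circ\partial^r_j=\partial^r_j\circ\Theta_r$ for each $j=0,\ldots,r$. By Remark~\ref{remark-sab-permutation}, the element $s_{n-r+j,n-r}$ is the cycle $(n-r+j\ \ n-r+j-1\ \ \cdots\ \ n-r)$, and a short direct computation shows that when applied entry-by-entry to the tuple $(n-r+1,n-r+2,\ldots,n)$ it produces $(n-r,n-r+1,\ldots,n-r+j-1,n-r+j+1,\ldots,n)$, which is exactly $(n-r,\ldots,n)$ with its $j$-th entry $n-r+j$ removed. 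Applying $\sigma$ and unwinding the definitions gives $\Theta_{r-1}(\partial^r_j(\sigma\otimes 1))=(\sigma(n-r),\ldots,\widehat{\sigma(n-r+j)},\ldots,\sigma(n))=\partial^r_j(\Theta_r(\sigma\otimes 1))$, as required. There is no real obstacle beyond bookkeeping indices; the elements $s_{ba}$ have been designed precisely so that deletion of the $j$-th letter of the tuple is encoded by right multiplication by $s_{n-r+j,n-r}$, which is exactly what makes this translation into the Coxeter-theoretic language work.
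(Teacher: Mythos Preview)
Your proof is correct and follows essentially the same approach as the paper. The paper packages well-definedness and bijectivity into a single orbit--stabiliser observation (that $\S_n$ acts transitively on injective words of length $r+1$ with stabiliser $\S_{n-r-1}$), whereas you separate these into two steps, but the content is identical; your chain-map verification via the explicit action of $s_{n-r+j,n-r}$ on the tuple $(n-r+1,\ldots,n)$ is exactly the computation the paper carries out.
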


\begin{proof}
	Recall that $\C(n)_r$ is the $R$-linear span of all injective words 
	$(a_0,\ldots,a_r)$ on the set $\{1,\ldots,n\}$.
	The group $\S_n$ acts transitively on the set of injective words,
	and the stabiliser of the word $(n-r,n-r+1,\ldots,n)$ 
	is exactly $\S_{n-r-1}\subseteq\S_n$.
	It follows that $\Theta_r$ is a well-defined isomorphism
	in each degree.

	Next we must check that $\Theta_r$ commutes with the differentials.
	It is enough to check that it commutes with the $\partial^r_j$,
	i.e.~that $\partial^r_j\Theta_r = \Theta_{r-1}\partial^r_j$,
	and so we compute
	\begin{align*}
		\partial^r_j
		\Theta_r(\sigma\otimes 1)
		&=
		\partial^r_j
		(\sigma(n-r),\ldots,\sigma(n))
		\\
		&=
		(\sigma(n-r),\ldots,\widehat{\sigma(n-r+j)},\ldots,\sigma(n))
		\\
		&=
		(\sigma(s_{n-r+j,n-r}(n-r+1)),\ldots,\sigma(s_{n-r+j,n-r}(n)))
		\\
		&=
		\Theta_{r-1}(\sigma s_{n-r+j,n-r}\otimes 1)
	\end{align*}
	as required.
	Here recall from Remark~\ref{remark-sab-permutation}	
	that $s_{n-r+j,n-r}$ is the cycle $(n-r+j,\cdots, n-r)$,
	which decreases each of $n-r+1,\ldots,n-r+j$ by $1$
	while preserving each of $n-r+j+1,\ldots,n$.
	(Note that the proof in fact demonstrates that $\Theta$ 
	is an isomorphism
	of augmented semi-simplicial $R\S_n$-modules.)
\end{proof}

\begin{remark}[The element $s_{n-r+j,n-r}$ and deleting letters]
\label{remark-deleting-letters}
	In $\C(n)$ the map $\partial^r_j$ is given by erasing the $j$-th letter,
	$\partial^r_j(a_0,\ldots,a_r)=(a_0,\ldots,\widehat{a_j},\ldots,a_r)$,
	whereas in $\C'(n)$ the map $\partial^r_j$ is given by
	$\partial^r_j(\sigma\otimes 1) = \sigma s_{n-r+j,n-r}\otimes 1$.
	How do these two correspond, and in particular,
	why does $s_{n-r+j,n-r}$ appear?
	Well, in the last expression we had
	$\sigma\otimes 1\in R\S_n\otimes_{R\S_{n-r-1}}\t$
	and
	$\sigma s_{n-r+j,n-r}\otimes 1\in R\S_n\otimes_{R\S_{n-r}}\t$.
	Increasing the algebra over which the 
	tensor product is taken, from $R\S_{n-r-1}$ to $R\S_{n-r}$,
	corresponds under $\Theta$ 
	to erasing the \emph{first} letter of a word.
	The role played by $s_{n-r+j,n-r}$, then, is to take the $j$-th letter
	$\sigma(n-r+j)$, which is to be deleted, and move it down to 
	start of the word, where it is indeed 
	deleted thanks to the change in the tensor product.
\end{remark}

Having defined $\C'(n)$ and the isomorphism
$\Theta\colon\C'(n)\xrightarrow{\cong}\C(n)$, 
we will now transport the filtration of $\C(n)$ to $\C'(n)$.

\begin{definition}\label{definition-Fpdash}
	Let $F'_0\subseteq\cdots\subseteq F'_{n-1}=\C'(n)$
	be the filtration defined by letting
	$F'_p$ in degree $r$ be the $R\S_{n-1}$-span
	of all elements of the following two kinds:
	\begin{enumerate}
		\item
		In the cases $r=-1,\ldots, n-2$,
		the element 
		\[
			s_{n,n-r-1}\otimes 1.
		\]
		\item
		In the cases $r= 0,\ldots, n-1$, 
		the elements 
		\[
			s_{n,n-t}\otimes 1
		\]
		for $t=0,\ldots,\min(r,p)$,
	\end{enumerate}
\end{definition}

It may seem unnecessary to distinguish between the two kinds of element
above, especially when $p\geqslant r$,
given the similarity between $s_{n,n-r-1}\otimes 1$
and the $s_{n,n-t}\otimes 1$.
But in fact it will be important to maintain the distinction,
see Remark~\ref{remark-distinction}.

\begin{proposition}\label{proposition-Fpdash}
	Under the isomorphism $\Theta$, the filtration
	$F'_0\subseteq\cdots\subseteq F'_{n-1}=\C'(n)$
	corresponds precisely to the filtration
	$F_0\subseteq\cdots\subseteq F_{n-1}=\C(n)$.
\end{proposition}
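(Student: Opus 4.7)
The plan is to compute $\Theta$ on each of the spanning elements of $F'_p$ listed in Definition~\ref{definition-Fpdash}, show that the image in $\C(n)$ is an injective word with $n$ in a prescribed position (or with no $n$ at all), and then use the $\S_n$-equivariance of $\Theta$ to match the $R\S_{n-1}$-span with $F_p$.

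First I would compute the two images in degree $r$. Using the two-line description of $s_{ba}$ from Remark~\ref{remark-sab-permutation}, the permutation $s_{n,n-t}$ fixes $1,\ldots,n-t-1$, sends $n-t$ to $n$, and decreases each of $n-t+1,\ldots,n$ by $1$. Therefore
\[
\Theta_r(s_{n,n-t}\otimes 1)
= (n-r,\ldots,n-t-1,n,n-t,\ldots,n-1),
\]
which is an injective word on $\{1,\ldots,n\}$ with $n$ at position $r-t$, i.e.\ in the $(t+1)$-st slot from the end. A similar computation gives
\[
\Theta_r(s_{n,n-r-1}\otimes 1) = (n-r-1,n-r,\ldots,n-1),
\]
which is an injective word on $\{1,\ldots,n-1\}$ and therefore does not contain $n$.

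Next I would exploit the $\S_{n-1}$-action. By the $\S_n$-equivariance of $\Theta$ (Proposition~\ref{proposition-Theta}), acting on the left by $\sigma\in\S_{n-1}$ corresponds to applying $\sigma$ letter-wise. Since $\S_{n-1}$ fixes the letter $n$ and acts transitively on the ordered tuples of distinct elements of $\{1,\ldots,n-1\}$ of each length, the $R\S_{n-1}$-span of $\Theta_r(s_{n,n-t}\otimes 1)$ is the $R$-span of all injective words of length $r+1$ on $\{1,\ldots,n\}$ with $n$ at position $r-t$. Likewise the $R\S_{n-1}$-span of $\Theta_r(s_{n,n-r-1}\otimes 1)$ is the $R$-span of all injective words of length $r+1$ on $\{1,\ldots,n-1\}$.

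Combining these as $t$ ranges over $0,\ldots,\min(r,p)$, I would conclude that $\Theta_r((F'_p)_r)$ is the $R$-span of the injective words in which $n$ occupies one of the positions $r,r-1,\ldots,\max(0,r-p)$, or else does not appear at all. Since these positions are precisely the last $p+1$ slots of a length-$(r+1)$ word (or all of its slots, in the range $r\leqslant p$), this coincides with $(F_p)_r$ as in Definition~\ref{definition-symmetric-filtration}, giving $\Theta(F'_p)=F_p$ in every degree. The main obstacle is really just the careful bookkeeping of positions, and in particular the uniform treatment of the $r\leqslant p$ boundary case; this is handled automatically by the $\min(r,p)$ in the definition of $F'_p$.
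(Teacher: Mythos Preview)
Your proof is correct and follows essentially the same approach as the paper's: you compute $\Theta_r$ on the two kinds of generators of $F'_p$, obtaining exactly the words $(n-r,\ldots,n-t-1,n,n-t,\ldots,n-1)$ and $(n-r-1,\ldots,n-1)$ that the paper also finds, and then use $\S_{n-1}$-equivariance and transitivity on ordered tuples in $\{1,\ldots,n-1\}$ to identify the resulting $R\S_{n-1}$-spans with the orbit description of $(F_p)_r$. The only difference is organisational: the paper first decomposes $F_p$ into $\S_{n-1}$-orbits and then matches representatives, whereas you compute images first and then identify the spans; the content is the same.
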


\begin{proof}
	Recall that $F_p\subseteq\C(n)$ is given in degree $r$ by the span
	of all injective words $(x_0,\ldots,x_r)$ 
	in which the letter $n$
	appears in the final $p+1$ positions, or not at all.
	This means that $F_p$ consists not of $\S_n$-modules but of
	$\S_{n-1}$-modules.
	The injective words in $F_p$ consist of various 
	$\S_{n-1}$-orbits, determined by the position of the letter $n$.
	These orbits come to us in two distinct kinds,
	the first containing just a single orbit, and the second
	containing a family of orbits:
	\begin{enumerate}
		\item
		The single orbit of words in which 
		$n$ does not appear at all.
		This orbit is only present in cases 
		$-1\leqslant r\leqslant n-2$,
		for if $r=n-1$ then our word has length $n$ and all letters
		must appear.
		\item
		The orbits in which $n$ appears in position $r-t$, 
		for $t=0,\ldots, \min(r,p)$.
		Such orbits are only present when $0\leqslant r\leqslant n-1$,
		for when $r=-1$ the only word is the empty word,
		and $n$ does not appear there.
	\end{enumerate}
	We now choose representatives of these orbits as follows:
	\begin{enumerate}
		\item
		In the cases $r=-1,\ldots, n-2$,
		we represent the $\S_{n-1}$-orbit in which $n$ does not appear
		by the word 
		\[(n-r-1,\ldots,n-1).\]
		\item
		In the cases $r= 0,\ldots, n-1$, for $t=0,\ldots,\min(r,p)$
		we represent the orbit in which $n$ appears 
		in position $r-t$ by the word
		\[(n-r,\ldots,n-t-1,n,n-t,\ldots,n-1).\]
		This is the word in which $n-r,\ldots,n-1$ appear in order,
		with $n$ inserted in position $r-t$.
	\end{enumerate}
	So $F_p$ is the $R\S_{n-1}$-span of the words just listed.
	
	To show that $\Theta F'_p =F_p$, it will suffice
	to check that the generators of $F'_p$ listed in 
	Definition~\ref{definition-Fpdash} are mapped to the generators 
	listed above.
	And indeed, 
	$s_{n,n-r-1}$ is the cycle $(n\ n-1\ \cdots\ n-r-1)$, so that
	\[
		\Theta_r(s_{n,n-r-1}\otimes 1)
		=
		s_{n,n-r-1}(n-r,\ldots,n)
		=
		(n-r-1,\ldots,n-1)
	\]
	is our representative of the first kind of orbit.
	And $s_{n,n-t}$ is the cycle $(n\ n-1\ \cdots\ n-t)$, so that
	\[
		\Theta_r(s_{n,n-t}\otimes 1)
		=
		s_{n,n-t}(n-r,\ldots,n)
		=
		(n-r,\cdots,n-t-1,n,n-t,\cdots,n-1)
	\]
	is our representative of the second variety of orbit.
\end{proof}

\begin{remark}[The role of $s_{n,n-r-1}$ and $s_{n,n-t}$]
\label{remark-moving-letters}
	In this proof $s_{n,n-r-1}$ was used to
	produce a word in which $n$ does not appear, and it did this
	by moving $n$ into a position where it would be deleted under the
	relevant tensor product, 
	much as in Remark~\ref{remark-deleting-letters}.
	Somewhat differently, $s_{n,n-t}$ was used
	to move the letter $n$ into position $r-t$ in our word
	while leaving the order of the remaining letters unaffected.
\end{remark}

\begin{remark}[A distinction]\label{remark-distinction}
	In the last definition and proposition we see two instances of the same
	important distinction,
	namely the distinction between the two kinds of generator 
	of $F'_p$ in Definition~\ref{definition-Fpdash},
	and the distinction between the two kinds of orbit
	in the proof of Proposition~\ref{proposition-Fpdash}.
	Despite the similarity between the elements
	$s_{n,n-r-1}$ and $s_{n,n-t}$,
	it will be necessary to keep careful track of the distinction 
	between the two.
	For example, under the identification $C(\C(n-1))\cong F_0$,
	the distinction between the two kinds 
	becomes the distinction between the two parts of the cone, 
	namely the unshifted `base' part and the shifted `cone' part.
\end{remark}

We will now transport the identifications of the filtration quotients
of $\C(n)$ to the setting of $\C'(n)$.
Recall from Definition~\ref{definition-cones-suspensions} that
$C(-)$ and $\Sigma(-)$ denote the cone and suspension 
of chain complexes.

\begin{definition}\label{definition-translate-quotients}
	Define maps
	\begin{gather*}
		\Phi\colon
		C(\C'(n-1))\longrightarrow F'_0
		\\
		\Psi\colon 
		R\S_{n-1}\otimes_{R\S_{n-p-1}}\Sigma^{p+1}\C'(n-p-1)
		\longrightarrow 
		F'_p/F'_{p-1}
	\end{gather*}
	by
	\begin{gather*}
		\Phi_r((\sigma\otimes 1),0)
		=
		(\sigma s_{n,n-r-1}\otimes 1)
		\\
		\Phi_r(0,(\tau\otimes 1))
		=
		(\tau\otimes 1)
		\\
		\Psi_{(p+1)+r}(
		\sigma\otimes(1\otimes 1))
		=\sigma s_{n,n-p}\otimes 1
	\end{gather*}
	for $\sigma,\tau\in R\S_{n-1}$.
\end{definition}

\begin{proposition}\label{proposition-translate-quotients}
	Under the isomorphism $\Theta$, the isomorphisms
	\[C(\C(n-1))\xrightarrow{\cong} F_0
	\quad\text{and}\quad
	R\S_{n-1}\otimes_{R\S_{n-p-1}}\Sigma^{p+1}\C(n-p-1)
	\xrightarrow{\cong}F_p/F_{p-1}\]
	of Lemmas~\ref{lemma-fzero} and~\ref{lemma-fp}
	correspond to the isomorphisms
	$\Phi$ and $\Psi$ respectively.
\end{proposition}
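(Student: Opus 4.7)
The proposition asserts two commutative squares, one for $\Phi$ and one for $\Psi$. The plan is to check each square directly on generators; since $\Theta$ is a chain isomorphism (Proposition~\ref{proposition-Theta}) and the maps from Lemmas~\ref{lemma-fzero} and~\ref{lemma-fp} are chain isomorphisms, establishing commutativity will at the same time prove that $\Phi$ and $\Psi$ are well-defined $R\S_{n-1}$-linear chain isomorphisms. The only inputs needed are the explicit cycle descriptions of $s_{n,n-r-1}$ and $s_{n,n-p}$ from Remark~\ref{remark-sab-permutation}.

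For the $\Phi$ square I split $C(\C'(n-1))_r = \C'(n-1)_r\oplus\C'(n-1)_{r-1}$ and treat each summand separately. On the first summand the required equality reduces to the permutation identity $s_{n,n-r-1}(n-r,\ldots,n)=(n-r-1,\ldots,n-1)$, which is immediate from $s_{n,n-r-1}=(n\ n-1\ \cdots\ n-r-1)$; the effect, as already foreshadowed in Remark~\ref{remark-deleting-letters}, is to remove $n$ from the word. On the second summand, $\Phi_r(0,\tau\otimes 1)=\tau\otimes 1\in\C'(n)_r$, whose $\Theta$-image is $(\tau(n-r),\ldots,\tau(n-1),n)$ since $\tau\in\S_{n-1}$ fixes $n$, and this matches the cone clause of Lemma~\ref{lemma-fzero}. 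For the $\Psi$ square, applying $\Theta\circ\Psi$ to a generator $\sigma\otimes(1\otimes 1)$ yields $\sigma\cdot s_{n,n-p}(n-p-r-1,\ldots,n)$. The cycle $s_{n,n-p}=(n\ n-1\ \cdots\ n-p)$ fixes every letter below $n-p$ and rotates the block $n-p,\ldots,n$ so that $n$ lands in position $r+1$; combined with $\sigma(n)=n$ this gives
\[
(\sigma(n-p-r-1),\ldots,\sigma(n-p-1),n,\sigma(n-p),\ldots,\sigma(n-1)),
\]
which is precisely the output of the Lemma~\ref{lemma-fp} formula on the input $\sigma\otimes(n-p-r-1,\ldots,n-p-1)=\sigma\otimes\Theta_r(1\otimes 1)$.

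The only real obstacle is bookkeeping: one must carefully track which symmetric group each tensor factor is taken over, handle the cone and suspension degree shifts correctly, and observe that $s_{n,n-r-1}$ and $s_{n,n-p}$ commute with the appropriate subgroups (e.g.~$\S_{n-r-2}$ commutes with $s_{n,n-r-1}$ inside $\S_n$) so that the defining formulas respect the tensor relations and their images land in $F'_0$ and $F'_p$ respectively. Beyond this indexing discipline, no additional idea is required; the proof is driven entirely by the two direct cycle computations above.
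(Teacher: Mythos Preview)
Your proposal is correct and follows essentially the same approach as the paper: both verify the commutative squares by evaluating on generators, reducing everything to the cycle identities $s_{n,n-r-1}(n-r,\ldots,n)=(n-r-1,\ldots,n-1)$ and $s_{n,n-p}(n-p-r-1,\ldots,n)=(n-p-r-1,\ldots,n-p-1,n,n-p,\ldots,n-1)$, together with the fact that elements of $\S_{n-1}$ fix $n$. Your additional remark that commutativity automatically yields well-definedness and the chain-isomorphism property of $\Phi$ and $\Psi$ is a nice bonus that the paper leaves implicit.
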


\begin{proof}
	First we recall that the isomorphism 
	$C(\C(n-1))\xrightarrow{\cong} F_0$
	of Lemma~\ref{lemma-fzero} is 
	given in degree $r$ by
	\[
		((x_0,\ldots,x_r),0)
		\longmapsto
		(x_0,\ldots,x_r),
		\qquad
		(0,(y_0,\ldots,y_{r-1}))
		\longmapsto
		(y_0,\ldots,y_{r-1},n).
	\]
	We must show that under this map we have
	\[
		(\Theta_r(\sigma\otimes 1),0)
		\longmapsto
		\Theta_r\Phi_r(\sigma\otimes 1,0),
		\qquad
		(0,\Theta_r(\tau\otimes 1))
		\longmapsto
		\Theta_r\Phi_r(0,\tau\otimes 1),
	\]
	for $\sigma, \tau\in R\S_{n-1}$.
	We check this by computing directly:
	\begin{align*}
		(\Theta_r(\sigma\otimes 1),0)
		&=
		((\sigma(n-r-1),\ldots,\sigma(n-1)),0)
		\\
		&\longmapsto
		(\sigma(n-r-1),\ldots,\sigma(n-1))
		\\
		&=
		(\sigma s_{n,n-r-1}(n-r),\ldots,\sigma s_{n,n-r-1}(n))
		\\
		&=
		\Theta_r(\sigma s_{n,n-r-1}\otimes 1)
		\\
		&=
		\Theta_r(\Phi_r(\sigma\otimes 1,0))
	\end{align*}
	and
	\begin{align*}
		(0,\Theta_{r-1}(\tau\otimes 1))
		&=
		(0,(\tau(n-r),\ldots,\tau(n-1)))
		\\
		&\longmapsto
		(\tau(n-r),\ldots,\tau(n-1),n)
		\\
		&=
		\Theta_r(\tau\otimes 1)
		\\
		&=
		\Theta_r(\Phi_r(0,\tau\otimes 1)).
	\end{align*}

	Next, the isomorphism 
	$
		R\S_{n-1}\otimes_{R\S_{n-p-1}}\Sigma^{p+1}\C(n-p-1)
		\xrightarrow{\cong}
		F_p/F_{p-1}
	$
	of Lemma~\ref{lemma-fp} is
	given in degree $(p+1)+r$,
	for $-1\leqslant r\leqslant n-p-2$, by 
	\[
		\sigma\otimes (x_0,\ldots,x_r)
		\longmapsto
		\sigma(x_0,\ldots,x_r,n,n-p,\ldots,n-1).
	\]
	Note that
	in this degree $R\S_{n-1}\otimes_{R\S_{n-p-1}}\Sigma^{p+1}\C'(n-p-1)$
	is
	\[
		R\S_{n-1}\otimes_{R\S_{n-p-1}} 
		(R\S_{n-p-1}\otimes_{R\S_{n-p-1-r}}\t)
		\cong
		R\S_{n-1} \otimes_{R\S_{n-p-1-r}}\t
	\]
	so that a typical generator 
	may be written in the form $\sigma\otimes(1\otimes 1)$
	for $\sigma\in\S_{n-1}$.
	We must show that, for such an element, we have
	\[
		\sigma\otimes\Theta_r(1\otimes 1)
		\longmapsto
		\Theta_{(p+1)+r}(\sigma\otimes 1)
	\]
	Now we compute:
	\begin{align*}
		\sigma\otimes \Theta_r(1\otimes 1)
		&=
		\sigma\otimes (n-p-1-r,\ldots,n-p-1)
		\\
		&\longmapsto
		\sigma(n-p-1-r,\ldots,n-p-1,n,n-p,\ldots,n-1)
		\\
		&=\sigma s_{n,n-p}(n-p-1-r,\ldots,n)
		\\
		&=\Theta_{(p+1)+r}(\sigma\otimes 1)\qedhere
	\end{align*}
\end{proof}

\section{An overview of $\D(n)$}
\label{section-Dn-overview}

At the end of the last section we summarised how to replace the complex
of injective words, its filtration, and the identification
of the filtration quotients with a version
\begin{gather*}
	\C'(n)
	\\
	F'_0\subseteq\cdots\subseteq F'_{n-1}=\C'(n)
	\\
	\Phi\colon C(\C'(n-1))\xrightarrow{\ \cong\ } F'_0
	\\
	\Psi\colon R\S_{n-1}\otimes_{R\S_{n-p-1}}\Sigma^{p+1}\C'(n-p-1)
	\xrightarrow{\ \cong\ }
	F'_p/F'_{p-1}
\end{gather*}
that was phrased entirely in terms of the group ring of $\S_n$,
where $\S_n$ is regarded as a Coxeter group.
This functions as a framework for adapting the entire situation
to the setting of Iwahori-Hecke algebras, by 
making the substitutions:
\[\begin{array}{ccc}
	R\S_m & \rightsquigarrow & \H_m
	\\
	s_i & \rightsquigarrow & T_i
	\\
	s_is_js_i = s_js_is_j & \rightsquigarrow & T_iT_jT_i=T_jT_iT_j
	\\
	s_i^2 = 1 & \rightsquigarrow & T_i^2 = (q-1)T_i + q
\end{array}\]
This is indeed the framework we will follow, and we shall 
follow it quite closely, though various obstacles will arise:
\begin{itemize}
	\item
	We will need to find entirely new proofs,
	within the Iwahori-Hecke setting, of the following:
	That the $\partial^r_j$ are well-defined, 
	that the differentials compose to zero,
	that the differentials respect the filtration, 
	that the filtration is exhaustive,
	that the maps $\Phi$ and $\Psi$ are well-defined chain maps, 
	and that they are isomorphisms.
	\item
	The group ring $R\S_m$ is obtained from $\H_m$ in the case
	$q=1$.
	This means that we must expect any and all quantities, once
	extended to the Iwahori-Hecke setting, to possibly
	contain hitherto-invisible powers of $q$.
	\item
	The relation $s_i^2=1$ 
	is deformed to $T_i^2 = (q-1)T_i + q$,
	and this can introduce new difficulties that are not present
	in the case $q=1$.
\end{itemize}
The plan is as follows:
\begin{itemize}
	\item
	In section~\ref{section-Dn} we define our new chain complex
	$\D(n)$, and verify that its differentials are well-defined
	and compose to $0$.
	\item
	In section~\ref{section-filtration} we define the filtration
	$F_0\subseteq\cdots\subseteq F_{n-1}\subseteq\D(n)$,
	and we prove that it consists of subcomplexes, and exhausts $\D(n)$.
	\item
	In section~\ref{section-filtration-quotients} we study
	the filtration quotients,  proving the following theorem.
\end{itemize}

\begin{theorem}\label{theorem-filtration}
	There are isomorphisms 
	\[
		\Phi\colon C(\D(n-1))\xrightarrow{\ \cong\ } F_0,
		\qquad
		\Psi\colon \H_{n-1}\otimes_{\H_{n-p-1}}\Sigma^{p+1}\D(n-p-1)
		\xrightarrow{\ \cong\ } F_p/F_{p-1}.
	\]
	Here $C$ denotes the cone and $\Sigma$ denotes suspension,
	as in Definition~\ref{definition-cones-suspensions},
	while the $\D(-)$ and $F_p$ will be defined in~\ref{definition-Dn} 
	and~\ref{definition-filtration} below.
\end{theorem}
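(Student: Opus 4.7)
The plan is to mirror the proof of Proposition~\ref{proposition-translate-quotients}, defining $\Phi$ and $\Psi$ by formulas parallel to Definition~\ref{definition-translate-quotients} with each Coxeter element $s_{ba}$ replaced by $T_{ba}=T_{b-1}T_{b-2}\cdots T_a$:
\[
\Phi_r(\sigma\otimes 1,\,0)=\sigma T_{n,n-r-1}\otimes 1,\qquad \Phi_r(0,\,\tau\otimes 1)=\tau\otimes 1,\qquad \Psi_{(p+1)+r}(\sigma\otimes(1\otimes 1))=\sigma T_{n,n-p}\otimes 1,
\]
with any compensating powers of $q$ to be determined once the chain-map calculation is carried out.

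Three things then need verification. First, well-definedness: every generator $T_i$ of the smaller algebra over which the source tensor product is taken has index $i$ differing by at least two from every index appearing in $T_{n,n-r-1}$ or $T_{n,n-p}$, so $T_i$ commutes with that whole element, can be pushed across it, and is then absorbed into the right-hand copy of $\t$ as a factor $q$. Second, the chain-map property: face by face, the identity $\partial^r_j\Phi_r=\Phi_{r-1}\,d^r_j$ should reduce to an algebraic identity in $\H_n$ of the form $T_{n,n-r-1}T_{n-r+j,n-r}\equiv T_{n-1-r+j,n-1-r}T_{n,n-r}$ modulo the tensor-product relation over the relevant parabolic subalgebra; such identities can be proved via braid relations and commutation, using Matsumoto's theorem~\ref{theorem-matsumoto} and the basis theorem~\ref{theorem-basis} to pass between reduced expressions. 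The cone part of $\Phi$ is easier: the $(-1)^r$ cone term matches the $j=r$ face of $\partial^r$ in $\D(n)$, since $T_{n-r+r,n-r}=T_{n,n-r}$. Third, bijectivity: by Proposition~\ref{proposition-basis}, $\D(n)_r$ is free over $R$ on the basis $\{T_x\otimes 1\mid x\in X^{-1}_{S_{n-r-1}}\}$, and the Mackey decomposition from subsection~\ref{subsection-cosets} partitions this basis according to the ``position of the letter $n$'' in $x$, producing exactly the basis elements in the image of $\Phi$ (base and cone parts) and of $\Psi$ (for each filtration level $p$).

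The principal obstacle is the chain-map property. In the group-algebra case the required identity holds as a literal equality of permutations, whereas in $\H_n$ the two sides agree only modulo the tensor-product relations, and each time one moves a generator $T_i$ past the tensor bar one picks up a factor of $q$ rather than $1$. Consequently the formulas for $\Phi$ and $\Psi$ may need to be multiplied by a uniform power of $q$ in order for the chain-map property to hold on the nose; determining this correction and carrying out the resulting bookkeeping is where the work will lie. Reassuringly, as the introduction notes, the deformed quadratic relation $T_i^2=(q-1)T_i+q$ should not enter these computations, since the elements $T_{ba}$ being multiplied together correspond to reduced words in $\S_n$ throughout, so everything can be rewritten as $T_w$ for a suitable $w$ via braid relations alone.
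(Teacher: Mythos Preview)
Your proposal follows essentially the same route as the paper: define $\Phi$ and $\Psi$ by the formulas you give, check well-definedness by commuting the small generators past $T_{n,n-r-1}$ or $T_{n,n-p}$, verify the chain-map property face by face, and then prove bijectivity by writing down $R$-bases on both sides via Proposition~\ref{proposition-basis} and the Mackey decomposition.  The paper's compensating $q$-powers turn out to be $\Phi_r(1\otimes 1,0)=q^{-r}\,T_{n,n-r-1}\otimes 1$ and $\Phi_r(0,1\otimes 1)=q\cdot(1\otimes 1)$, while $\Psi$ needs none.

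There is, however, one genuine error in your reasoning.  You assert that the quadratic relation $T_i^2=(q-1)T_i+q$ will not enter because ``the elements $T_{ba}$ being multiplied together correspond to reduced words in $\S_n$ throughout.''  This fails in the $\Psi$ chain-map check.  Applying the face map $\partial^{(p+1)+r}_j$ to $T_{n,n-p}\otimes 1$ produces the product $T_{n,n-p}\,T_{n-p-r-1+j,\,n-p-r-1}$, and for $j\geqslant r+2$ the underlying word in $\S_n$ is \emph{not} reduced: already for $j=r+2$ the concatenation contains the subword $s_{n-p}s_{n-p}$.  The paper computes these faces directly (Lemma~\ref{lemma-four-cases}, case~3, via the ``clash'' identity of Proposition~\ref{T-moves}\eqref{T-clash}), and the quadratic relation genuinely appears, producing a $(q-1)$-term.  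What saves the argument is not that this term is absent, but that it --- together with the $j=r+1$ term --- lies in $F_{p-1}$ and hence vanishes in the quotient $F_p/F_{p-1}$; only the faces $j\leqslant r$ survive, and those do reduce to braid-relation identities as you expect.  So the approach works, but you must compute with the quadratic relation once and then observe that its contribution dies in the quotient; you cannot bypass it by appealing to reducedness.
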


At each step, it will be evident that the new constructions
in the Iwahori-Hecke setting reduce to those in symmetric group setting
in the case $q=1$.
Altogether, these results will prove the following theorem,
which follows exactly as in the proof of Theorem~\ref{theorem-farmer}.

\begin{theorem}\label{theorem-Dn-acyclicity}
	$H_d(\D(n))=0$ for $d\leqslant n-2$.
\end{theorem}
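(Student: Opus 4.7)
The plan is to prove this by induction on $n$, directly mirroring the proof of Theorem~\ref{theorem-farmer}. The base case is $n = 0$: by construction $\D(0)$ is concentrated in degree $-1 = n-1$, so the claim $H_d(\D(0)) = 0$ for $d \leqslant -2$ is vacuous. For the inductive step, I assume the theorem for all smaller values of $n$ and use the filtration $F_0 \subseteq F_1 \subseteq \cdots \subseteq F_{n-1} = \D(n)$ from section~\ref{section-filtration}, together with the identifications supplied by Theorem~\ref{theorem-filtration}.

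The bottom layer $F_0 \cong C(\D(n-1))$ is a cone in the sense of Definition~\ref{definition-cones-suspensions}, hence chain-contractible, so its homology vanishes in every degree. For each $1 \leqslant p \leqslant n-1$ we have $F_p/F_{p-1} \cong \H_{n-1} \otimes_{\H_{n-p-1}} \Sigma^{p+1}\D(n-p-1)$. The inductive hypothesis gives $H_d(\D(n-p-1)) = 0$ for $d \leqslant (n-p-1) - 2$, and suspending by $p+1$ shifts this vanishing to the range $d \leqslant (n-p-1) - 2 + (p+1) = n-2$. To transfer this vanishing across the tensor product, I would invoke Proposition~\ref{proposition-basis}: $\H_{n-1}$ is free as a right $\H_{n-p-1}$-module with an explicit basis indexed by distinguished coset representatives. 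Freeness makes the functor $\H_{n-1} \otimes_{\H_{n-p-1}} (-)$ exact, so the homology of $F_p/F_{p-1}$, viewed as a chain complex of $R$-modules, is a direct sum of copies of the homology of $\Sigma^{p+1}\D(n-p-1)$, and in particular vanishes in degrees $d \leqslant n - 2$.

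Once $F_0$ and every $F_p/F_{p-1}$ are known to be acyclic in the range $d \leqslant n-2$, the conclusion for $\D(n)$ itself follows by a routine induction up the filtration: the long exact sequence in homology attached to $0 \to F_{p-1} \to F_p \to F_p/F_{p-1} \to 0$ propagates the vanishing from $F_{p-1}$ and $F_p/F_{p-1}$ up to $F_p$, terminating at $F_{n-1} = \D(n)$.

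I do not expect any serious new obstacle at this stage, because all of the genuinely new work has already been packaged into Theorem~\ref{theorem-filtration} (identification of the filtration quotients in the Iwahori--Hecke setting, where hitherto-invisible powers of $q$ and the deformed quadratic relation must be accounted for) and Proposition~\ref{proposition-basis} (freeness, supplied by the basis theorem together with the theory of distinguished coset representatives). The induction above is then a clean transcription of Farmer's argument with $R\S_m$ replaced by $\H_m$ throughout, and should read essentially verbatim like the proof of Theorem~\ref{theorem-farmer}.
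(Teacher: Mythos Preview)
Your proposal is correct and follows exactly the route the paper takes: the paper simply says the theorem ``follows exactly as in the proof of Theorem~\ref{theorem-farmer}'', and your write-up is precisely that argument transplanted to $\D(n)$ via the filtration and Theorem~\ref{theorem-filtration}. Your explicit appeal to Proposition~\ref{proposition-basis} for the freeness of $\H_{n-1}$ over $\H_{n-p-1}$, and to the long exact sequence to climb the filtration, are reasonable elaborations of steps the paper leaves implicit.
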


\section{The chain complex $\D(n)$}\label{section-Dn}

In this section we introduce our new chain complex $\D(n)$,
which will be the Iwahori-Hecke analogue of the
complex of injective words,
or more accurately, of the replacement $\C'(n)$ that we obtained
in section~\ref{section-translation}.

To begin with, we introduce elements $T_{ba}\in\H_n$
that are analogous to the elements $s_{ba}\in R\S_n$
which arose in our discussion of $\C'(n)$ and its filtration,
and we establish the properties
that we will need in what follows.

\begin{definition}[The elements $T_{ba}$]\label{definition-Tab}
	Given $n\geqslant b\geqslant a\geqslant 1$ we define
	\[T_{ba} = T_{b-1}T_{b-2}\cdots T_a.\]
	Note that the indices always decrease from left to right,
	so that when $b=a$ the product is empty and $T_{aa}=1$.
	Note also that $T_{a+1\,a}=T_a$.
	Note that $T_{ba}=T_{s_{ba}}$, and in particular
	that it specialises to the element
	$s_{ba}$ of Definition~\ref{definition-sba}
	in the case $q=1$.
	Where appropriate we will separate the indices of $T_{ba}$
	by a comma for clarity, e.g.~$T_{b,a}=T_{ba}$.
\end{definition}

We now need to establish some properties of the $T_{ba}$ for later use.
In the following proposition we will denote closed intervals of 
natural numbers using the usual notation for subsets of $\mathbb{R}$, 
i.e.~$[a,b]=\{a,a+1,\ldots,b\}\subseteq\mathbb{N}$.

\begin{proposition}\label{T-moves}
	\begin{enumerate}
		\item\label{T-commute}
		If $[a,b]\cap[c,d]=\emptyset$, then
		$T_{dc}T_{ba} = T_{ba}T_{dc}$.

		\item\label{T-concatenate}
		If $b\in[a,c]$ then $T_{ca} = T_{cb}T_{ba}$.
		In particular, $T_{ba}=T_{b-1}T_{b-1,a}=T_{b,a+1}T_a$.

		\item\label{T-multi-swap}
		For $[c,d],[c-1,d-1]\subseteq[a,b]$ we have
		$T_{ba}T_{dc} = T_{d-1,c-1}T_{ba}$.
		In particular, if $k\in(a,b)$ then $T_{ba}T_k=T_{k-1}T_{ba}$.

		\item\label{T-clash}
		For $a\leqslant b<c\leqslant d$, we have
		$T_{db}T_{ca}
		=
		(q-1)\cdot T_{c-1,b}T_{da} 
		+ q \cdot T_{c-1,a}T_{d,b+1}$.
	\end{enumerate}
\end{proposition}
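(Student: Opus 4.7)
Parts (1) and (2) are immediate combinatorial consequences of the braid relations and the definition of $T_{ba}$. For part (1), reducing without loss of generality to $b<c$, every generator of $T_{ba}$ has index in $[a,b-1]$ and every generator of $T_{dc}$ has index in $[c,d-1]$; since $c-(b-1)\geq 2$, every such pair of generators commutes via $T_iT_j=T_jT_i$ for $|i-j|>1$, and hence the whole expressions commute. Part (2) is a direct telescoping: $T_{cb}T_{ba}=(T_{c-1}\cdots T_b)(T_{b-1}\cdots T_a)=T_{ca}$.

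For part (3), my plan is first to settle the special case $T_{ba}T_k=T_{k-1}T_{ba}$ for $a<k<b$, and then to bootstrap to the general statement by induction on $d-c$. In the special case, I would write $T_{ba}=(T_{b-1}\cdots T_{k+1})T_kT_{k-1}(T_{k-2}\cdots T_a)$ and append $T_k$ on the right; this $T_k$ commutes past $T_{k-2}\cdots T_a$ since those indices are at distance $\geq 2$ from $k$, creating a subword $T_kT_{k-1}T_k$ to which the order-three braid relation applies, giving $T_{k-1}T_kT_{k-1}$. The freshly-created leading $T_{k-1}$ then commutes past $T_{b-1}\cdots T_{k+1}$ for the same distance reason, leaving $T_{k-1}T_{ba}$. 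For the general case the base $d=c$ is trivial, and for $d>c$ I would use part (2) to write $T_{dc}=T_{d-1}T_{d-1,c}$, apply the special case to pull $T_{d-1}$ across $T_{ba}$ as $T_{d-2}$ (which is legal because the hypotheses force $a<d-1<b$), apply the inductive hypothesis to $T_{ba}T_{d-1,c}$, and reassemble using part (2).

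Part (4) will be the main obstacle, since it is the only identity that forces the quadratic relation $T_b^2=(q-1)T_b+q$ to appear. I would induct on $c-b$. For the base case $c=b+1$ I use part (2) to write $T_{b+1,a}=T_bT_{ba}$ and $T_{db}=T_{d,b+1}T_b$, so that
\[
T_{db}T_{ca}=T_{d,b+1}T_b^2T_{ba}=(q-1)T_{db}T_{ba}+qT_{d,b+1}T_{ba}=(q-1)T_{da}+qT_{ba}T_{d,b+1},
\]
where the last equality uses part (2) to recognise $T_{db}T_{ba}=T_{da}$ and part (1) to commute $T_{ba}$ past $T_{d,b+1}$ (their index sets being the separated intervals $[a,b-1]$ and $[b+1,d-1]$); this is exactly the claimed right-hand side because $T_{c-1,b}=1$ and $T_{c-1,a}=T_{ba}$ when $c=b+1$. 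For the inductive step $c-b\geq 2$, I split $T_{ca}=T_{c-1}T_{c-1,a}$ via part (2), apply the special case of part (3) to push $T_{c-1}$ leftwards across $T_{db}$ as $T_{c-2}$ (valid because $b<c-1<d$), invoke the inductive hypothesis on the shorter $T_{db}T_{c-1,a}$, and finally reabsorb $T_{c-2}$ using $T_{c-2}T_{c-2,b}=T_{c-1,b}$ and $T_{c-2}T_{c-2,a}=T_{c-1,a}$, both from part (2).
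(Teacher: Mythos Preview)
Your proof is correct. Parts (1)--(3) match the paper's argument essentially verbatim: the paper dispatches (1) and (2) in a sentence each, proves the single-letter case of (3) exactly as you do, and then says ``repeated applications'' where you write out an explicit induction on $d-c$.

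For part (4) there is a small organisational difference worth noting. You induct on $c-b$, peeling off one letter $T_{c-1}$ at a time from the front of $T_{ca}$ and pushing it across $T_{db}$ via the special case of (3). The paper instead uses the full strength of (3) in one shot: it splits $T_{ca}=T_{c,b+1}T_{b+1,a}$, moves the whole block $T_{c,b+1}$ past $T_{db}$ to produce $T_{c-1,b}$ on the left, and then isolates $T_b^2$ directly. Both routes arrive at the quadratic relation $T_b^2=(q-1)T_b+q$ as the sole nontrivial input; the paper's version is a single chain of equalities with no induction, while yours trades that for a slightly more mechanical step-by-step reduction. Neither has a real advantage over the other.
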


\begin{proof}
	\eqref{T-commute} holds because all letters of $T_{dc}$
	commute with all letters of $T_{ba}$.
	\eqref{T-concatenate} is immediate from the definition.
	Next, the braid relation $T_kT_{k-1}T_k=T_{k-1}T_kT_{k-1}$ can be
	written as $T_{k+1,k-1}T_k = T_{k-1}T_{k+1,k-1}$.
	From this it follows that if $k\in(a,b)$
	then:
	\begin{align*}
		T_{ba}T_k 
		&= (T_{b,k+1}T_{k+1,k-1}T_{k-1,a})T_k
		\\
		&= T_{b,k+1}(T_{k+1,k-1}T_k)T_{k-1,a}
		\\
		&= T_{b,k+1}(T_{k-1}T_{k+1,k-1})T_{k-1,a}
		\\
		&= T_{k-1}(T_{b,k+1}T_{k+1,k-1})T_{k-1,a}
		\\
		&= T_{k-1}T_{ba}
	\end{align*}
	Now \eqref{T-multi-swap} follows by repeated applications of the
	last property:
	\[
		T_{ba}T_{dc}=T_{ba}T_{d-1}\cdots T_c
		=T_{d-2}\cdots T_{c-1}T_{ba}=T_{d-1,c-1}T_{ba}
	\]
	We now prove~\eqref{T-clash}, which boils down to 
	the identity $T_b^2=(q-1)T_b+q$.
	\begin{align*}
		T_{db}T_{ca}
		&=
		T_{db}T_{c,b+1}T_{b+1,a}
		\\
		&=
		T_{c-1,b}T_{db}T_{b+1,a}
		\\
		&=
		T_{c-1,b}(T_{d,b+1}T_b)(T_bT_{ba})
		\\
		&=
		T_{c-1,b}T_{d,b+1}( T_b^2) T_{ba}
		\\
		&=
		T_{c-1,b}T_{d,b+1}\big((q-1)T_b+q\big) T_{ba}
		\\
		&=
		(q-1)\cdot T_{c-1,b}T_{d,b+1}T_bT_{ba}
		+q \cdot T_{c-1,b}T_{d,b+1}T_{ba}
		\\
		&=
		(q-1)\cdot T_{c-1,b}T_{da}
		+q \cdot T_{c-1,b}T_{ba}T_{d,b+1}
		\\
		&=
		(q-1)\cdot T_{c-1,b}T_{da}
		+q \cdot T_{c-1,a}T_{d,b+1}
	\end{align*}
	Here we used~\eqref{T-concatenate} multiple times,
	we used~\eqref{T-multi-swap} on the second line,
	and on the seventh line we used~\eqref{T-commute}.	
\end{proof}

Note that properties (1), (2) and (3) above 
were consequences of the braid relation
$T_kT_{k-1}T_k=T_{k-1}T_kT_{k-1}$,
and so reflect properties of the braid group itself.

We are now in a position to write down our complex $\D(n)$.
It is a direct extension of the complex $\C'(n)$
of Definition~\ref{definition-cdashn}, 
which was isomorphic to $\C(n)$
via the isomorphism $\Theta$ of 
Proposition~\ref{proposition-Theta}.
One can see that the complex
$\D(n)$ below reduces to $\C'(n)$ in the case $q=1$.

\begin{definition}[The complex $\D(n)$]\label{definition-Dn}
	Let $n\geqslant 0$.
	The complex $\D(n)$ is defined to be the chain complex of left
	$\H_n$-modules with
	$\D(n)_r=\H_n\otimes_{\H_{n-r-1}}\t$
	for $r$ in the range $-1\leqslant r \leqslant n-1$,
	and with $\D(n)_r=0$ for $r$ outside that range, so that:
	\[\begin{array}{lcl}
		\D(n)_{n-1}
		&=&
		\H_n\otimes_{\H_{0}}\t
		\\
		&\vdots&
		\\
		\D(n)_{r}
		&=&
		\H_n\otimes_{\H_{n-r-1}}\t
		\\
		&\vdots&
		\\
		\D(n)_{-1}
		&=&
		\H_n\otimes_{\H_{n}}\t
	\end{array}\]
	Observe that $\D(-1)\cong\t$
	and $\D(n-1)\cong\H_n$.
	The differential
	\[
		\partial^r\colon\D(n)_r\longrightarrow\D(n)_{r-1}
	\]
	of $\D(n)$ is defined by 
	\[
		\partial^r = \sum_{j=0}^r (-1)^j q^{-j}\partial^r_j
	\]
	where $\partial^r_j\colon\D(n)_r\to\D(n)_{r-1}$
	is given by
	\[
		\partial^r_j (x\otimes y) = (x\cdot T_{n-r+j,n-r})\otimes y.
	\]
\end{definition}

The following lemma explains that $\D(n)$ really is a chain complex,
i.e.~that $\partial^{r-1}\circ\partial^r=0$,
by explaining how to regard $\D(n)$ as an augmented semi-simplicial 
$\H_n$-module.
As we will see, the powers of $q$ appearing in the definition of
$\partial^r$ compensate for the fact that 
$\partial^{r-1}_i\partial^r_j$ and $\partial^{r-1}_{j-1}\partial^r_i$
are only equal up to powers of $q$.

\begin{lemma}\label{lemma-Dn-chain}
	The maps $\partial^r_j\colon\D(n)_r\to\D(n)_{r-1}$ satisfy the 
	relation
	\[
		\partial^{r-1}_i\partial^r_j
		=
		q\cdot
		\partial^{r-1}_{j-1}\partial^r_i
	\]
	for $0\leqslant i<j\leqslant r$.
	Consequently, the maps $q^{-j}\partial^r_j$ satisfy the relation
	\[
		(q^{-i}\partial^{r-1}_i)\circ (q^{-j}\partial^r_j)
		=
		(q^{-(j-1)}\partial^{r-1}_{j-1})\circ(q^{-i}\partial^r_i)
	\]
	for $0\leqslant i<j\leqslant r$,
	making the $\D(n)_r$ into an augmented semi-simplicial $\H_n$-module
	with face maps $q^{-j}\partial^r_j$,
	and $\D(n)$ into the associated chain complex.
	In particular, we have $\partial^{r-1}\circ\partial^r=0$
	for all $r\geqslant 1$.
\end{lemma}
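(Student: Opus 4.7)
The plan is to evaluate $\partial^{r-1}_i\partial^r_j$ and $\partial^{r-1}_{j-1}\partial^r_i$ on a generator $x\otimes y\in\D(n)_r=\H_n\otimes_{\H_{n-r-1}}\t$, translate the claimed identity into a single equation inside $\H_n\otimes_{\H_{n-r+1}}\t$, and then verify it using Proposition~\ref{T-moves} together with the scalar action of each $T_i$ on $\t$.

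Setting $a=n-r$ and unpacking the definitions, a direct calculation gives $\partial^{r-1}_i\partial^r_j(x\otimes y) = x\,T_{a+j,a}\,T_{a+1+i,a+1}\otimes y$ and $\partial^{r-1}_{j-1}\partial^r_i(x\otimes y) = x\,T_{a+i,a}\,T_{a+j,a+1}\otimes y$, both living in $\H_n\otimes_{\H_{a+1}}\t$. So the first assertion reduces to proving
\[
T_{a+j,a}\,T_{a+1+i,a+1}\otimes 1 = q\cdot T_{a+i,a}\,T_{a+j,a+1}\otimes 1
\]
in $\H_n\otimes_{\H_{a+1}}\t$. I would accomplish this in two moves: first apply Proposition~\ref{T-moves}\eqref{T-multi-swap} with outer pair $(a+j,a)$ and inner pair $(a+1+i,a+1)$ to rewrite the left-hand side as $T_{a+i,a}\,T_{a+j,a}$; the hypotheses $[a+1,a+i+1],[a,a+i]\subseteq[a,a+j]$ are exactly the assumption $i<j$. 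Second, split off the trailing $T_a$ using Proposition~\ref{T-moves}\eqref{T-concatenate} as $T_{a+j,a}=T_{a+j,a+1}T_a$, and push this $T_a$ across the tensor product: since $T_a\in\H_{a+1}$ and acts on $\t$ as multiplication by $q$, it produces precisely the required factor of $q$.

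The consequence about the $q^{-j}\partial^r_j$ is then pure bookkeeping. Multiplying the identity just established by $q^{-i-j}$ on both sides converts it into the standard semi-simplicial relation $(q^{-i}\partial^{r-1}_i)(q^{-j}\partial^r_j)=(q^{-(j-1)}\partial^{r-1}_{j-1})(q^{-i}\partial^r_i)$, and $\partial^{r-1}\circ\partial^r=0$ then follows from the usual pairing argument in the double sum $\sum_{i,j}(-1)^{i+j}q^{-i-j}\partial^{r-1}_i\partial^r_j$, where the $(i,j)$ with $i<j$ and the $(j-1,i)$ terms cancel after reindexing.

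The step requiring most care is the first algebraic move: recognising that Proposition~\ref{T-moves}\eqref{T-multi-swap} applies with exactly these parameters, and noticing that the ``extra'' $T_a$ produced by the second move is precisely the deformation artefact that, through the tensor relation over $\H_{a+1}$, supplies the factor of $q$. This is the spot where $q\neq 1$ actually enters the argument; misreading the range conditions in \eqref{T-multi-swap}, or attempting to push a letter $T_k$ with $k>a$ across the tensor product, would break the proof.
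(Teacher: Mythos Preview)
Your proposal is correct and is essentially the same argument as the paper's proof, modulo the cosmetic substitution $a=n-r$ and the presentation style (you state the target identity first and then verify it, whereas the paper runs the computation as a single chain of equalities). The two key steps --- applying Proposition~\ref{T-moves}\eqref{T-multi-swap} to commute the factors, then splitting off $T_{n-r}$ via \eqref{T-concatenate} and moving it across the tensor to extract the $q$ --- are identical in both.
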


\begin{proof}
	The second relation is simply a rewriting of the first.
	The lemma's final claim then follows from standard semi-simplicial
	machinery.
	To prove the first relation we compute:
	\begin{align*}
		\partial^{r-1}_i\partial^r_j(x\otimes y)
		&=
		\partial^{r-1}_i(xT_{n-r+j,n-r} \otimes y)
		\\
		&= 
		xT_{n-r+j,n-r}T_{n-r+i+1,n-r+1}\otimes y
		\\
		&= 
		xT_{n-r+i,n-r}T_{n-r+j,n-r}\otimes y
		\\
		&= 
		xT_{n-r+i,n-r}T_{n-r+j,n-r+1}T_{n-r}\otimes y
		\\
		&= 
		xT_{n-r+i,n-r}T_{n-r+j,n-r+1}\otimes T_{n-r}y
		\\
		&= 
		q\cdot xT_{n-r+i,n-r}T_{n-r+j,n-r+1}\otimes y
		\\
		&= 
		q\cdot \partial^{r-1}_{j-1}(xT_{n-r+i,n-r}\otimes y)
		\\
		&= 
		q\cdot \partial^{r-1}_{j-1}\partial^r_i(x\otimes y)
	\end{align*}
	Here the third and fourth equalities used, respectively,
	parts~\eqref{T-multi-swap} and~\eqref{T-concatenate}
	of Proposition~\ref{T-moves}.
	The fifth equality comes from the fact that the elements
	lie in $\D(n)_{r-2}=\H_n\otimes_{\H_{n-r+1}}\t$,
	and $\H_{n-r+1}$ contains the element $T_{n-r}$.
\end{proof}

The lemma above demonstrates that the powers of $q$ appearing in
the formula 
$\partial^r=\sum_{j=0}^r (-1)^jq^j\partial^r_j$ are there precisely to
counter the $q$ that appears in the formula 
$\partial^{r-1}_i\partial^r_j = q\cdot \partial^{r-1}_{j-1}\partial^r_i$.
The lemma also explains that this single instance of $q$ arises because the
$T_i$ act on $\t$ as multiplication by $q$.

\begin{remark}
	In Remark~\ref{remark-similarity} we explained that the definition of 
	$\D(n)$ is similar to several that already appear in the literature.
	The same is true of Lemma~\ref{lemma-Dn-chain},
	and the same properties of the braid group that are relevant there
	also underpin the fact that the objects constructed in
	Definition~34 of~\cite{HepworthCoxeter},
	Definition~7.5 of~\cite{Krannich} and Definition~7.1 of~\cite{Boyd}
	are semi-simplicial.
	See in particular Remark~2.8 of~\cite{Krannich} and the paragraph
	that precedes it, and Lemma~7.3 of~\cite{Boyd}.
\end{remark}

\begin{lemma}\label{lemma-Dn-well-defined}
	The maps $\partial^r_i$ are well-defined.
\end{lemma}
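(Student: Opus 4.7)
My plan is to verify that the formula $(x \otimes y) \mapsto x T_{n-r+i,n-r} \otimes y$, which is visibly well-defined on $\H_n \otimes_R \t$, descends to a well-defined map $\H_n \otimes_{\H_{n-r-1}} \t \longrightarrow \H_n \otimes_{\H_{n-r}} \t$. Concretely, I need to show that for every $h \in \H_{n-r-1}$,
\[
(xh)\, T_{n-r+i,n-r} \otimes y \;=\; x T_{n-r+i,n-r} \otimes hy
\]
holds in $\D(n)_{r-1}$. By $R$-bilinearity, it is enough to check this when $h$ ranges over the algebra generators $T_1, \ldots, T_{n-r-2}$ of $\H_{n-r-1}$.

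The key reduction is to exploit the inclusion $\H_{n-r-1} \subseteq \H_{n-r}$. Because the right-hand side lives in $\H_n \otimes_{\H_{n-r}} \t$, any element of $\H_{n-r-1}$ may be slid across the tensor product, so the right-hand side equals $x T_{n-r+i,n-r}\, h \otimes y$. Well-definedness therefore reduces to verifying the equality
\[
h\, T_{n-r+i,n-r} \;=\; T_{n-r+i,n-r}\, h
\]
in the algebra $\H_n$ itself.

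For $h = T_k$ with $1 \leq k \leq n-r-2$, this commutation is an immediate consequence of Proposition~\ref{T-moves}\eqref{T-commute}: writing $T_k = T_{k+1,k}$, the intervals $[k,k+1]$ and $[n-r, n-r+i]$ are disjoint, since $k+1 \leq n-r-1 < n-r$. The boundary case $i=0$ is trivial because $T_{n-r,n-r} = 1$. I do not expect any genuine obstacle in this lemma; its content is essentially the observation that the algebra generators of $\H_{n-r-1}$ all have index at most $n-r-2$, which is far enough from the index range $[n-r, n-r+i-1]$ of the generators appearing in $T_{n-r+i,n-r}$ to commute past it, and this is precisely what permits the subalgebra indexing the tensor product to increase by one across the differential.
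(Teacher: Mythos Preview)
Your proof is correct and follows essentially the same approach as the paper's: both reduce well-definedness to the commutation of $T_{n-r+i,n-r}$ with the generators $T_1,\ldots,T_{n-r-2}$ of $\H_{n-r-1}$, which holds because the index ranges are disjoint. Your version is slightly more explicit in citing Proposition~\ref{T-moves}\eqref{T-commute} and noting the trivial case $i=0$, but the argument is the same.
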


\begin{proof}
	We must show that if $\lambda\in\H_{n-r-1}$, then
	$\partial^r_i(x\lambda\otimes y)$ and $\partial^r_i(x\otimes\lambda y)$
	agree, or in other words that $(x\lambda T_{n-r+i,n-r})\otimes y
	= (x T_{n-r+i,n-r})\otimes (\lambda y)$.
	This amounts to
	showing that $T_{n-r+i,n-r}$ commutes 
	with the generators of $\H_{n-r-1}$.
	To see this, observe that $T_{n-r+i,n-r}$ is a word in 
	$T_{n-r},\ldots,T_{n-r+i-1}$,
	while $\H_{n-r-1}$ is generated by $T_1,\ldots,T_{n-r-2}$,
	and each of the former commutes with each of the latter.
\end{proof}

We conclude the section with the following lemma,
which examines the effect of the maps $\partial^r_j$ on certain specific
elements of $\D(n)$.
It will be used in the sections that follow.

\begin{lemma}\label{lemma-four-cases}
	Suppose that 
	$0\leqslant r\leqslant n-1$, $0\leqslant j\leqslant r$,
	and $0\leqslant t\leqslant r+1$.
	Then we have:
	\[
		\partial^r_j(T_{n,n-t}\otimes 1)
		=
		\begin{cases}
			T_{n-r+j,n-r}(T_{n,n-t}\otimes 1)
			&
			j\leqslant r-t-1
			\\
			\\
			T_{n,n-r}\otimes 1
			&
			j=r-t
			\\
			\\
			(q-1) T_{n-r+j-1,n-t}(T_{n,n-(r-1)-1}\otimes 1)
			\\
			\qquad + q T_{n-r+j-1,n-r}(T_{n,n-(t-1)} \otimes 1)
			&
			j\geqslant r-t+1, \ r\geqslant t
			\\
			\\
			qT_{n-r+j-1,n-r-1}(T_{n,n-r}\otimes 1)
			&
			t=r+1
		\end{cases}
	\]
	In the first case, the resulting quantity is 
	equal to $qT_{n-r+j,n-r}(T_{n,n-(t-1)}\otimes 1)$.
	Note that the first three cases exclude the possibility that
	$t=r+1$, while the final case holds for $t=r+1$ regardless of the 
	value of $j$ in the range $0\leqslant j\leqslant r$.
\end{lemma}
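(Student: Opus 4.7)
The plan is a direct case-by-case computation. Unpacking the definition, $\partial^r_j(T_{n,n-t}\otimes 1)=(T_{n,n-t}\cdot T_{n-r+j,n-r})\otimes 1$, so everything reduces to simplifying the product $T_{n,n-t}\cdot T_{n-r+j,n-r}$ inside $\H_n$ and then interpreting the result in $\D(n)_{r-1}=\H_n\otimes_{\H_{n-r}}\t$. Setting $a=n-r$, $b=n-r+j$, $c=n-t$, $d=n$, the four cases of the lemma correspond precisely to the four possible relative positions of the intervals $[a,b]$ and $[c,d]$ on the integer line: disjoint ($b<c$), touching ($b=c$), overlapping with $a\leqslant c<b\leqslant d$, and the degenerate $c<a$, which is exactly $t=r+1$.

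Cases~1, 2, and~3 will be essentially immediate applications of Proposition~\ref{T-moves}. For Case~1 ($j\leqslant r-t-1$), disjointness gives $T_{n,n-t}\,T_{n-r+j,n-r}=T_{n-r+j,n-r}\,T_{n,n-t}$ by part~\eqref{T-commute}, matching the stated formula after evaluating in $\D(n)_{r-1}$. For Case~2 ($j=r-t$), the concatenation identity~\eqref{T-concatenate} yields $T_{n,n-t}\,T_{n-t,n-r}=T_{n,n-r}$, giving $T_{n,n-r}\otimes 1$ directly. For Case~3 ($j\geqslant r-t+1$, $t\leqslant r$), a direct application of the clash identity~\eqref{T-clash}, with the proposition's $(a,b,c,d)$ matched to our $(n-r,n-t,n-r+j,n)$, produces the two-term expression, after noting the identity $n-(r-1)-1=n-r$.

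The main obstacle is Case~4 ($t=r+1$), where the interval $[c,d]=[n-r-1,n]$ sits one step below $[a,b]=[n-r,n-r+j]$ on the left; this is the only case where we must genuinely use the tensor product structure in order to produce the factor of $q$. My plan is in two steps: first, apply~\eqref{T-multi-swap} with $T_{ba}=T_{n,n-r-1}$ and $T_{dc}=T_{n-r+j,n-r}$ (the containments $[n-r,n-r+j],[n-r-1,n-r+j-1]\subseteq[n-r-1,n]$ are immediate) to rewrite the product as $T_{n-r+j-1,n-r-1}\,T_{n,n-r-1}$; second, decompose $T_{n,n-r-1}=T_{n,n-r}\,T_{n-r-1}$ via~\eqref{T-concatenate} and, crucially, observe that the rightmost factor $T_{n-r-1}$ now lies in the subalgebra $\H_{n-r}$ over which we are tensoring, so it passes through the tensor symbol and acts on $\t$ as multiplication by $q$. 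This yields $T_{n,n-r-1}\otimes 1=q\,T_{n,n-r}\otimes 1$, and combining with the first step gives the stated formula. The auxiliary rewriting asserted for Case~1 will be verified by a parallel manipulation, combining~\eqref{T-concatenate} with the commutation relations in order to expose a factor that can be passed through the tensor over $\H_{n-r}$.
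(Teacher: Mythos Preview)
Your treatment of the four main cases is correct and follows the paper's proof exactly: commutation from Proposition~\ref{T-moves}\eqref{T-commute} for Case~1, concatenation~\eqref{T-concatenate} for Case~2, the clash identity~\eqref{T-clash} for Case~3 (with the parameter match you give), and multi-swap~\eqref{T-multi-swap} followed by passing $T_{n-r-1}$ across the tensor for Case~4.

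The one place to be careful is the auxiliary rewriting asserted for Case~1. Your plan is to ``expose a factor that can be passed through the tensor over $\H_{n-r}$'', and this is precisely what the paper attempts: write $T_{n,n-t}=T_{n,n-t+1}T_{n-t}$ and move $T_{n-t}$ across. But $T_{n-t}\in\H_{n-r}$ would require $n-t\leqslant n-r-1$, i.e.\ $t\geqslant r+1$, whereas Case~1 forces $t\leqslant r-1$; so this step fails, and indeed the auxiliary identity is not valid in general. For instance with $n=5$, $r=3$, $j=0$, $t=1$ the two sides become $T_4\otimes 1$ and $q(1\otimes 1)$ in $\H_5\otimes_{\H_2}\t$, and these are distinct basis elements since $e$ and $s_4$ are both $(\emptyset,S_2)$-reduced. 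The good news is that this auxiliary form is never actually needed downstream: in the only place it is cited (the proof that $F_p$ is a subcomplex), the unrewritten expression $T_{n-r+j,n-r}(T_{n,n-t}\otimes 1)$ already exhibits the result as an $\H_{n-1}$-multiple of the degree-$(r-1)$ generator $T_{n,n-t}\otimes 1$ of $F_p$, because in that context one has $t\leqslant\min(r-1,p)$.
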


\begin{proof}
	We consider the three cases in turn.

	\emph{Case 1:} $j\leqslant r-t-1$.
	Here we have 
	\begin{align*}
		\partial^r_j(T_{n,n-t}\otimes 1)
		&=
		T_{n,n-t}T_{n-r+j,n-r}\otimes 1
		\\
		&=
		T_{n-r+j,n-r}(T_{n,n-t}\otimes 1)
	\end{align*}
	where in the second equality we used the upper bound on $j$ 
	to show that $n-r+j\leqslant n-t-1$,
	so that $T_{n,n-t}$ and $T_{n-r+j,n-r}$ commute.
	For the final sentence of the lemma, we have	
	\begin{align*}
		T_{n-r+j,n-r}(T_{n,n-t}\otimes 1)
		&=
		T_{n-r+j,n-r}(T_{n,n-t+1}T_{n-t}\otimes 1)
		\\
		&=
		T_{n-r+j,n-r}(T_{n,n-t+1}\otimes T_{n-t}\cdot 1)
		\\
		&=
		qT_{n-r+j,n-r}(T_{n,n-(t-1)}\otimes 1)
	\end{align*}
	where in the second equality we used the
	fact that $t\leqslant r$ to conclude that
	$T_{n-t}$ lies in $\H_{n-r}$ and can therefore be moved past the
	tensor product
	(we are working in $\D(n)_{r-1} = \H_n\otimes_{\H_{n-r}}\t$).
	
	\emph{Case 2:} $j=r-t$.
	Here we have
	\[
		\partial^r_j(T_{n,n-t}\otimes 1)
		=
		T_{n,n-t}T_{n-r+j,n-r}\otimes 1
		=
		T_{n,n-t}T_{n-t,n-r}\otimes 1
		=
		T_{n,n-r}\otimes 1
	\]
	as claimed.

	\emph{Case 3:} $j\geqslant r-t+1$, $r\geqslant t$.
	Here we have
	$n-r\leqslant n-t<n-r+j\leqslant n$ so that by
	part \eqref{T-clash} of Proposition~\ref{T-moves}	
	we have:
	\begin{align*}
		\partial^r_j(T_{n,n-t}\otimes 1)
		&=
		T_{n,n-t}T_{n-r+j,n-r}\otimes 1
		\\
		&=
		(q-1) T_{n-r+j-1,n-t}T_{n,n-r}\otimes 1
		\\
		&\qquad + q T_{n-r+j-1,n-r}T_{n,n-t+1} \otimes 1
		\\
		&=
		(q-1) T_{n-r+j-1,n-t}(T_{n,n-(r-1)-1}\otimes 1)
		\\
		&\qquad + q T_{n-r+j-1,n-r}(T_{n,n-(t-1)} \otimes 1)
	\end{align*}
	as required.

	\emph{Case 4:} $t=r+1$.
	Then by part~\eqref{T-multi-swap} of Proposition~\ref{T-moves} we have:
	\begin{align*}
		\partial^r_j(T_{n,n-r-1}\otimes 1)
		&=T_{n,n-r-1}T_{n-r+j,n-r}\otimes 1
		\\
		&=T_{n-r+j-1,n-r-1}T_{n,n-r-1}\otimes 1
		\\
		&=T_{n-r+j-1,n-r-1}T_{n,n-r}T_{n-r-1}\otimes 1
		\\
		&=T_{n-r+j-1,n-r-1}T_{n,n-r}\otimes T_{n-r-1}\cdot 1
		\\
		&=qT_{n-r+j-1,n-r-1}(T_{n,n-r}\otimes 1)
	\end{align*}
	(Again, we are working in $\D(n)_{r-1}=\H_n\otimes_{\H_{n-r}}\t$.)
\end{proof}

\section{The filtration of $\D(n)$}\label{section-filtration}

In the last section we constructed our complex $\D(n)$,
and now we will generalise the filtration
of the complex of injective words to $\D(n)$.
In Definition~\ref{definition-Fpdash} and 
Proposition~\ref{proposition-Fpdash} we reformulated 
that filtration as 
$F_0'\subseteq\cdots\subseteq F_{n-1}'=\C'(n)$ 
where $F'_p$ is defined, in degree $r$,
to be the $R\S_{n-1}$-span of the elements $s_{n,n-r-1}\otimes 1$
(only in the cases $r=-1,\ldots,n-2$) and the elements
$s_{n,n-t}\otimes 1$ for $t=0,\ldots,\min(r,p)$ 
(only in the cases $r=0,\ldots,n-1$).
We generalise this in the next definition.
Again, in the case $q=1$ we will precisely recover the definition
of the $F'_p$.

\begin{definition}[The filtration $F_p$ of $\D(n)$]\label{definition-filtration}
	Let $0\leqslant p\leqslant (n-1)$.
	Recall that $\D(n)$ in degree $r$ is given by the tensor product
	$\D(n)_r = \H_n\otimes_{\H_{n-r-1}}\t$.
	Define $F_p\subseteq \D(n)$ to be the subcomplex of $\D(n)$
	that in degree $r$ is generated as an $\H_{n-1}$-module
	by all elements of the following two kinds:
	\begin{enumerate}
		\item
		In the cases $r=-1,\ldots, n-2$,
		the element 
		\[
			T_{n,n-r-1}\otimes 1.
		\]
		\item
		In the cases $r= 0,\ldots, n-1$, 
		the elements 
		\[
			T_{n,n-t}\otimes 1
		\]
		for $t=0,\ldots,\min(r,p)$.
	\end{enumerate}
	Thus we obtain a filtration
	\[
		F_0\subseteq F_1\subseteq\cdots \subseteq F_{n-1}\subseteq\D(n)
	\]
	by $\H_{n-1}$-submodules.
	Note that $F_p$ is \emph{not} an $\H_n$-submodule of $\D(n)$.
\end{definition}

Despite the similarity between $T_{n,n-r-1}\otimes 1$
and $T_{n,n-t}\otimes 1$, it will be important to maintain the distinction
between the two kinds of generator of $F_p$, for example to later construct
the isomorphism $F_0\cong C(\D(n-1))$.
See Remark~\ref{remark-moving-letters},
which discussed the role of the words $s_{n,n-r-1}$ and $s_{n,n-t}$
in the definition of $\C'(n)$, 
and Remark~\ref{remark-distinction},
which discussed the importance of maintaining the distinction between
$s_{n,n-r-1}$ and $s_{n,n-t}$.

We must show that the filtration exhausts $\D(n)$,
and that it consists of subcomplexes of $\D(n)$.

\begin{lemma}\label{lemma-exhaustion}
	$F_{n-1}=\D(n)$.
\end{lemma}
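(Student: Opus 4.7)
The plan is to combine the basis theorem with the Mackey decomposition recalled in subsection~\ref{subsection-cosets} to write every standard basis element of $\D(n)_r$ as an $\H_{n-1}$-multiple of one of the generators of $F_{n-1}$. Fix a degree $r$ with $-1\leqslant r\leqslant n-1$ and set $J=\{s_1,\ldots,s_{n-r-2}\}$, so that $W_J=\S_{n-r-1}$. By Proposition~\ref{proposition-basis}, $\D(n)_r=\H_n\otimes_{\H_{n-r-1}}\t$ has $R$-basis $\{T_x\otimes 1\mid x\in X_J^{-1}\}$.

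Now apply the Mackey decomposition with $K=\{s_1,\ldots,s_{n-2}\}$, so that $W_K=\S_{n-1}$. This writes each $x\in X_J^{-1}$ uniquely as $x=yd$ with $d\in X_{KJ}$ a distinguished double coset representative and $y\in W_K=\S_{n-1}$. As recorded in the excerpt, the lengths then add: $\ell(yd)=\ell(y)+\ell(d)$, whence $T_x=T_yT_d$ and
\[
T_x\otimes 1 \;=\; T_y\cdot(T_d\otimes 1)\;\in\;\H_{n-1}\cdot(T_d\otimes 1).
\]
It therefore suffices to show that $\{T_d\otimes 1\mid d\in X_{KJ}\}$ coincides with the generating set of $F_{n-1}$ in degree $r$ given in Definition~\ref{definition-filtration}.

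To this end we identify $X_{KJ}$. Left multiplication by $\S_{n-1}$ preserves $\sigma^{-1}(n)$, while right multiplication by $\S_{n-r-1}$ permutes freely the values in $\{1,\ldots,n-r-1\}$; hence the double cosets $\S_{n-1}\backslash\S_n/\S_{n-r-1}$ consist of one coset for each value $\sigma^{-1}(n)=n-t$ with $t=0,\ldots,r$, together with, when $r\leqslant n-2$, one further coset comprising all $\sigma$ with $\sigma^{-1}(n)\in\{1,\ldots,n-r-1\}$. A short inversion count (or direct verification of the $(K,J)$-reduced condition) shows that the unique shortest representative is $s_{n,n-t}$ (of length $t$) in the first case and $s_{n,n-r-1}$ (of length $r+1$) in the second, which under the correspondence $s_w\mapsto T_w$ are precisely the two kinds of generators in Definition~\ref{definition-filtration}. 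The main obstacle is this enumeration of $X_{KJ}$, together with the boundary cases $r=-1$ (where the Mackey decomposition is trivial and the single generator $T_{n,n}\otimes 1=1\otimes 1$ spans $\D(n)_{-1}\cong\t$) and $r=n-1$ (where the first kind of generator is legitimately absent because $\{1,\ldots,n-r-1\}=\emptyset$); both fit the above framework once the identification of $X_{KJ}$ is in hand.
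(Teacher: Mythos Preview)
Your proof is correct. Both arguments reduce the problem to identifying certain coset representatives for $\S_{n-1}$ in $\S_n$, but they organise this slightly differently. The paper uses only the \emph{single} coset decomposition: it shows $\H_n$ is the $\H_{n-1}$-span of $\{T_{nb}\mid 1\leqslant b\leqslant n\}$ via $X_{S_{n-1}}=\{s_{nb}\}$, and then observes that for $b\leqslant n-r-1$ one has $T_{nb}\otimes 1 = q^{\,n-r-1-b}(T_{n,n-r-1}\otimes 1)$ by absorbing the tail $T_{n-r-1,b}$ across the tensor. You instead invoke the Mackey decomposition and the \emph{double} coset representatives $X_{S_{n-1}S_{n-r-1}}$ directly, which collapses these redundant generators in one step. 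Your identification of $X_{KJ}$ via the invariant $\sigma^{-1}(n)$ is exactly what the paper later proves as Lemma~\ref{lemma-double-coset-reps} (there by an $M$-move argument rather than an inversion count), so your route effectively anticipates the machinery of subsection~\ref{subsection-isomorphism}. The paper's approach is marginally more self-contained at this point in the exposition, since it forward-references only the simpler Lemma~\ref{lemma-left-minimal}; yours is more structural and closer to the basis computation of Lemma~\ref{lemma-bases}.
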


\begin{proof}
	In degree $r$, 
	$F_{n-1}$ is the $\H_{n-1}$-span of the elements of the form
	$T_{n,n-t}\otimes 1$ for $t=0,\ldots,r+1$.
	(The case $t=r+1$ is $T_{n,n-r-1}\otimes 1$.)
	We must show that these elements span
	$\D(n)_r=\H_n\otimes_{\H_{n-r-1}}\t$.

	To begin we claim that $\H_n$ is the $\H_{n-1}$-span
	of the elements $T_{nb}$ for $b\in\{1,\ldots,n\}$.
	To see this write $S_m=\{s_1,\ldots,s_{m-1}\}$
	and consider the Coxeter system $(\S_n,S_n)$.
	The subgroup of $\S_n$ generated by $S_{n-1}$ is
	precisely $\S_{n-1}$.
	In Lemma~\ref{lemma-left-minimal}
	we will see that an element of $\S_n$ is $(\S_{n-1},\emptyset)$-reduced
	if and only if it has the form $s_{nb}$ for some $b=1,\ldots,n$.
	Thus any element of $\S_n$ has a representation of the form
	$x\cdot s_{nb}$ for some $b\in\{1,\ldots,n\}$, where $x\in\S_{n-1}$,
	and $\ell(x\cdot s_{nb})=\ell(x)+\ell(s_{nb})$.
	Recall that $\H_n$ has basis given by the elements
	$T_{\sigma}$ for $\sigma\in\S_n$.
	Consequently, $\H_n$ is spanned by elements of the form
	$T_{x\cdot s_{nb}} = T_x\cdot T_{s_{nb}}=T_x\cdot T_{nb}$
	where $x\in\S_{n-1}$ and $1\leqslant b\leqslant n$.
	But in any such product the factor $T_x$ lies in $\H_{n-1}$,
	and this proves our claim.

	Now, in any degree $r$ we have $\D(n)_r=\H_n\otimes_{\H_{n-r-1}}\t$,
	and by the last paragraph this is the $\H_{n-1}$ span 
	of the elements $T_{nb}\otimes 1$ for $b\in\{1,\ldots,n\}$.
	However, note that if $b\leqslant n-r-1$, then
	$T_{nb}\otimes 1 = T_{n,n-r-1}T_{n-r-1,b}\otimes 1
	= T_{n,n-r-1}\otimes T_{n-r-1,b}\cdot 1 
	= q^{n-r-1-b}( T_{n,n-r-1}\otimes 1)$,
	because $T_{n-r-1,b} = T_{n-r-2}\cdots T_b$ lies in $\H_{n-r-1}$.
	It then follows that $\D(n)_r=\H_n\otimes_{\H_{n-r-1}}\t$ is the
	$\H_{n-1}$-span of the elements $T_{n,n-t}\otimes 1$
	for $t=0,\ldots,r+1$, as required.
\end{proof}

\begin{lemma}
	$F_p$ is a subcomplex of $\D(n)$.
\end{lemma}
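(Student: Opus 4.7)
The aim is to show that $\partial^r$ maps $F_p$ in degree $r$ into $F_p$ in degree $r-1$. Since $\partial^r = \sum_{j=0}^{r}(-1)^j q^{-j}\partial^r_j$, and each $\partial^r_j$ acts on $\D(n)_r = \H_n \otimes_{\H_{n-r-1}} \t$ by right multiplication by $T_{n-r+j,n-r}$ on the first factor, each $\partial^r_j$ is left $\H_n$-linear, and in particular $\H_{n-1}$-linear. Hence it suffices to verify that for each listed generator $g$ of $F_p$ in degree $r$ and each $j \in \{0, \ldots, r\}$, the element $\partial^r_j(g)$ lies in the $\H_{n-1}$-span of the listed generators of $F_p$ in degree $r-1$, namely $T_{n,n-r}\otimes 1$ (the first-kind generator in degree $r-1$) and $T_{n,n-t'}\otimes 1$ for $0 \leq t' \leq \min(r-1, p)$.

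The essential tool is Lemma \ref{lemma-four-cases}. The second-kind generators $T_{n,n-t}\otimes 1$ with $0 \leq t \leq \min(r,p)$ are handled by its Cases 1--3 according to the relative sizes of $j$ and $r-t$, while the first-kind generator $T_{n,n-r-1}\otimes 1$ corresponds to the boundary value $t = r+1$ treated in Case 4. I would then dispatch the four cases in turn: in Case 1 the output is an $\H_{n-1}$-multiple of $T_{n,n-t}\otimes 1$, where the case hypothesis $j \leq r - t - 1$ forces $t \leq r - 1$, so that combined with $t \leq p$ this element is a valid second-kind generator in degree $r-1$; in Case 2 the output is exactly $T_{n,n-r}\otimes 1$, the first-kind generator in degree $r-1$; in Case 3 the output is an $\H_{n-1}$-linear combination of the first-kind generator $T_{n,n-r}\otimes 1$ and the second-kind generator $T_{n,n-(t-1)}\otimes 1$, where the case conditions $j \geq r - t + 1$ and $j \leq r$ force $1 \leq t \leq \min(r,p)$ and hence $t - 1 \in \{0, \ldots, \min(r-1, p)\}$; and in Case 4 the output is an $\H_{n-1}$-multiple of $T_{n,n-r}\otimes 1$, again the first-kind generator in degree $r-1$. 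In every case the prefactors $T_{a,b}$ appearing in front have top index at most $n-r+j-1 \leq n-1$, so they lie in $\H_{n-1}$ as required.

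There is no real obstacle beyond the bookkeeping of index ranges. The one point that deserves attention is Case 3, where the output contains a term whose index $t$ drops by one: one must confirm that $t - 1$ remains in $\{0, \ldots, \min(r-1, p)\}$, but this is forced directly by the case hypotheses. Combining the four cases, each $\partial^r_j$ preserves $F_p$, and hence so does $\partial^r$, completing the proof that $F_p$ is a subcomplex.
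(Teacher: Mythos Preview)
Your proof is correct and follows essentially the same approach as the paper: reduce to checking each $\partial^r_j$ on the listed generators, invoke Lemma~\ref{lemma-four-cases}, and verify case by case that the outputs are $\H_{n-1}$-combinations of the degree-$(r-1)$ generators. The only minor slip is your blanket bound on the prefactors: in Case~1 the prefactor is $T_{n-r+j,\,n-r}$ with first subscript $n-r+j$ (not $n-r+j-1$), but since $j\leqslant r-t-1\leqslant r-1$ there, this still lies in $\H_{n-1}$, so the conclusion stands.
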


\begin{proof}
	Fix $0\leqslant p\leqslant (n-1)$ and $0\leqslant r\leqslant (n-1)$.
	Definition~\ref{definition-filtration} lists the generators of $F_p$,
	as an $\H_{n-1}$-module, in each degree $r$.
	So it is enough for us to fix $0\leqslant j \leqslant r$
	and then check that if we apply 
	$\partial^r_j$ to one of the generators of $F_p$ in degree $r$,
	then the result is an $\H_{n-1}$-linear combination of the generators
	of $F_p$ in degree $r-1$.
	
	First let us take $0\leqslant r\leqslant (n-2)$ 
	and consider the generator $T_{n,n-r-1}\otimes 1$ of $F_p$
	in degree $r$.
	Then by the final case of Lemma~\ref{lemma-four-cases}
	we have:
	\begin{align*}
		\partial^r_j(T_{n,n-r-1}\otimes 1)
		&=qT_{n-r+j-1,n-r-1}(T_{n,n-r}\otimes 1)
		\\
		&=qT_{(n-1)-r+j,(n-1)-r}(T_{n,n-(r-1)-1}\otimes 1)
	\end{align*}
	Now observe that $T_{n,n-(r-1)-1}\otimes 1$ is one of the generators
	of $F_p$ in degree $r-1$, while $T_{n-r+j-1,n-r-1}\in\H_{n-1}$,
	so that the final result lies in $F_p$ as required.

	Next, let us take $0\leqslant r\leqslant (n-1)$ and 
	$0\leqslant t\leqslant\min(r,p)$, 
	and consider the generator $T_{n,n-t}\otimes 1$ of $F_p$
	in degree $r$.
	We must check that each 
	$\partial^r_j(T_{n,n-t}\otimes 1)$
	lies in $F_p$ in degree $r-1$.
	We split this into three cases depending on the value of $j$, 
	and in each case we apply the relevant part of 	
	Lemma~\ref{lemma-four-cases}.

	\emph{Case 1:} $j\leqslant r-t-1$.
	Here we have 
	\[
		\partial^r_j(T_{n,n-t}\otimes 1)
		=
		qT_{n-r+j,n-r}(T_{n,n-(t-1)}\otimes 1).
	\]
	The upper bound on $j$ shows that $T_{n-r+j,n-r}\in\H_{n-1}$,
	while $T_{n,n-(t-1)}$ is a generator of $F_p$ in degree $r-1$,
	so that we can conclude that the above expression lies in $F_p$.

	\emph{Case 2:} $j=r-t$.
	Here we have
	\[
		\partial^r_j(T_{n,n-t}\otimes 1)
		=
		T_{n,n-r}\otimes 1
		=
		T_{n,n-(r-1)-1}\otimes 1
	\]
	which is one of the generators of $F_p$
	in degree $r-1$.

	\emph{Case 3:} $j\geqslant r-t+1$.
	Here we have
	\begin{align*}
		\partial^r_j(T_{n,n-t}\otimes 1)
		&=
		(q-1) T_{n-r+j-1,n-t}(T_{n,n-(r-1)-1}\otimes 1)
		\\
		&\qquad + q T_{n-r+j-1,n-r}(T_{n,n-(t-1)} \otimes 1)
	\end{align*}
	Observe that $T_{n,n-(r-1)-1}\otimes 1$ and $T_{n,n-(t-1)} \otimes 1$
	are generators of $F_p$ in degree $r-1$, 
	while $T_{n-r+j-1,n-t}$ and $T_{n-r+j-1,n-r}$ both lie in
	$\H_{n-1}$ since $j\leqslant r$, so that the above expression lies
	in $F_p$, as required.
\end{proof}

\section{Identifying the filtration quotients}
\label{section-filtration-quotients}
By now we have defined our Iwahori-Hecke analogue $\D(n)$ of the complex
of injective words, together with the corresponding filtration 
$F_0\subseteq \cdots \subseteq F_{n-1}\subseteq\D(n)$.
The next step is to understand the filtration quotients
$F_0=F_0/F_{-1}$ and $F_p/F_{p-1}$ for $p\geqslant 1$,
and prove Theorem~\ref{theorem-filtration}.

In Definition~\ref{definition-translate-quotients}
and Proposition~\ref{proposition-translate-quotients}
we reformulated the identification of the filtration quotients of
the complex of injective words
as the isomorphisms 
\[
	\Phi\colon C(\C'(n-1))\xrightarrow{\ \cong\ } F'_0,
	\qquad
	\Psi\colon R\S_{n-1}\otimes_{R\S_{n-p-1}}\Sigma^{p+1}\C'(n-p-1)
	\xrightarrow{\ \cong\ } F_p'/F_{p-1}'
\]
defined by 
\begin{gather*}
	\Phi_r((\sigma\otimes 1),0)
	=
	(\sigma s_{n,n-r-1}\otimes 1)
	\\
	\Phi_r(0,(\tau\otimes 1))
	=
	(\tau\otimes 1)
	\\
	\Psi_{(p+1)+r}(
	\sigma\otimes(1\otimes 1))
	=\sigma s_{n,n-p}\otimes 1
\end{gather*}
for $\sigma,\tau\in R\S_{n-1}$.
In this section we will extend these isomorphisms to the setting of
Iwahori-Hecke algebras.
In subsections~\ref{subsection-phi} and~\ref{subsection-psi}
we will define our extensions of $\Phi$ and $\Psi$,
and prove that they are well-defined chain maps.
Then in subsection~\ref{subsection-isomorphism} we will prove that the
extended maps are isomorphisms, 
completing the proof of Theorem~\ref{theorem-filtration}.

\subsection{Defining $\Phi$}\label{subsection-phi}

Recall that the map $\Phi\colon C(\C'(n-1))\to F'_0$
is defined by 
$\Phi_r((\sigma\otimes 1),0) = (\sigma s_{n,n-r-1}\otimes 1)$
and
$\Phi_r(0,(\tau\otimes 1)) = (\tau\otimes 1)$
for $\sigma,\tau\in\S_{n-1}$.
In the present subsection we will extend this map to the Iwahori-Hecke setting
and prove that it is a well-defined chain map.

Let us recall from Definition~\ref{definition-cones-suspensions}
that the cone $C(\D(n-1))$ is given in degree $r$ by
\[
	C(\D(n-1))_r
	=
	\D(n-1)_r\oplus \D(n-1)_{r-1}
	=
	(\H_{n-1}\otimes_{\H_{n-r-2}}\t) \oplus 
	(\H_{n-1}\otimes_{\H_{n-r-1}}\t)
\]
and therefore has one or two generators as an $\H_{n-1}$-module,
namely $(1\otimes 1,0)$ (only when $-1\leqslant r\leqslant n-2$)
and $(0,1\otimes 1)$ (only when $0\leqslant r\leqslant n-1)$.
Similarly, $F_0$ in degree $r$ is the 
$\H_{n-1}$-submodule of $\D(n)_r=\H_n\otimes_{\H_{n-r-1}}\t$
with generators $T_{n,n-r-1}\otimes 1$ (only when $-1\leqslant r\leqslant n-2$)
and $T_{n,n}\otimes 1=1\otimes 1$ (only when $0\leqslant r \leqslant n-1$).
Observe that in most degrees, both $C(\D(n-1))$ and $F_0$ have two generators,
but that in the extreme cases $r=-1,n-1$ they each have just one generator.

\begin{definition}
	Define 
	\[
		\Phi\colon C(\D(n-1))\longrightarrow F_0
	\]
	to be the $\H_{n-1}$-linear map defined on generators in degree $r$ by
	\[
		\Phi_r(1\otimes 1,0)
		=
		q^{-r}\cdot T_{n,n-r-1}\otimes 1
	\]
	and
	\[
		\Phi_r(0,1\otimes 1)
		=
		q\cdot 1\otimes 1.
	\]
	In other words, given 
	\[
		(x\otimes 1,y\otimes 1)\in
		C(\D(n-1))_r = 
		(\H_{n-1}\otimes_{\H_{n-r-2}}\t) \oplus 
		(\H_{n-1}\otimes_{\H_{n-r-1}}\t)
	\]
	we have 
	\[
		\Phi_r(x\otimes 1,y\otimes 1)=
		q^{-r} xT_{n,n-r-1}\otimes 1 + q y\otimes 1
		\in \D(n)_r = \H_n\otimes_{\H_{n-r-1}}\t.
	\]
\end{definition}

One sees immediately that this restricts to the original definition
in the case $q=1$.

\begin{lemma}
	$\Phi$ is well defined.
\end{lemma}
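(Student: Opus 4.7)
The plan is to show that $\Phi$ is well-defined by checking that the prescribed formulas on the generators $(1\otimes 1, 0)$ and $(0, 1\otimes 1)$ respect the tensor relations in each of the two summands of $C(\D(n-1))_r = (\H_{n-1}\otimes_{\H_{n-r-2}}\t) \oplus (\H_{n-1}\otimes_{\H_{n-r-1}}\t)$. Since the map is prescribed to be $\H_{n-1}$-linear, this reduces to verifying that, for each standard generator $T_i$ of $\H_{n-r-2}$ (respectively $\H_{n-r-1}$), the relation $xT_i\otimes 1 = q\,x\otimes 1$ in the relevant summand of the domain is sent to a valid relation in the target $\D(n)_r = \H_n\otimes_{\H_{n-r-1}}\t$.

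For the first summand, the relevant indices range over $1\leqslant i\leqslant n-r-3$, and the required identity is $T_i T_{n,n-r-1}\otimes 1 = q\, T_{n,n-r-1}\otimes 1$ in $\H_n\otimes_{\H_{n-r-1}}\t$. The key observation is that $T_{n,n-r-1} = T_{n-1}T_{n-2}\cdots T_{n-r-1}$ is a product of $T_j$ with $j\geqslant n-r-1$, so every factor satisfies $|i-j|\geqslant 2$ with our $T_i$ and therefore commutes with it. After commuting $T_i$ past $T_{n,n-r-1}$, we note that $T_i\in\H_{n-r-1}$ (because $i\leqslant n-r-3\leqslant n-r-2$), so it can be pulled across the tensor product, where it acts on $\t$ as multiplication by $q$. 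This yields the required relation.

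For the second summand, the generators $T_i$ of $\H_{n-r-1}$ already lie in the subalgebra over which the target tensor product is formed, so the required relation $y T_i\otimes 1 = q\,y\otimes 1$ holds automatically in $\H_n\otimes_{\H_{n-r-1}}\t$, with no manipulation needed.

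Finally, I would observe that the image of $\Phi_r$ plainly lands in $F_0$: both $T_{n,n-r-1}\otimes 1$ and $1\otimes 1 = T_{n,n}\otimes 1$ appear among the $\H_{n-1}$-module generators of $F_0$ in Definition~\ref{definition-filtration}, so any $\H_{n-1}$-linear combination of scalar multiples of them lies in $F_0$. The only non-trivial step is the commutativity argument in the first summand, which rests entirely on the index bound $i\leqslant n-r-3$; there is no real obstacle, and this is best viewed as a compatibility check rather than a substantive computation.
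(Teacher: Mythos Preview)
Your proof is correct and follows essentially the same approach as the paper: both reduce well-definedness to the observation that, for $1\leqslant i\leqslant n-r-3$, the generator $T_i$ commutes with $T_{n,n-r-1}$ and lies in $\H_{n-r-1}$, while for the second summand the relation is immediate. Your additional remark that the image lands in $F_0$ is a helpful sanity check not made explicit in the paper.
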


\begin{proof}
	For $(x\otimes 1,y\otimes 1)$ as in the definition,
	we must check that 
	$\Phi_r(x\otimes 1,y\otimes 1)$
	depends only on $x\otimes 1$ and $y\otimes 1$, 
	and not on $x$ and $y$ themselves.
	This means that we must check that
	\[
		\Phi_r(xT_k\otimes 1,yT_{k'}\otimes 1)
		=
		\Phi_r(x\otimes (T_k\cdot 1),y\otimes (T_{k'}\cdot1))
	\]
	for $1\leqslant k \leqslant n-r-3$ and
	$1\leqslant k'\leqslant n-r-2$.
	This amounts to showing that
	$xT_kT_{n,n-r-1}\otimes 1 = xT_{n,n-r-1}\otimes (T_k\cdot 1)$
	and 
	$yT_{k'}\otimes 1 = y\otimes (T_{k'}\cdot 1)$
	in $\H_n\otimes_{\H_{n-r-1}}\t$.
	The first of these holds since $T_k$ commutes with $T_{n,n-r-1}$
	and lies in $\H_{n-r-1}$,
	and the second is immediate since $T_{k'}$ lies in $\H_{n-r-1}$.
\end{proof}

\begin{lemma}
	$\Phi$ is a chain map.
\end{lemma}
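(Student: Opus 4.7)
The plan is to verify the identity $\Phi_{r-1}\circ d = \partial^{r}\circ\Phi_{r}$, where $d$ denotes the differential of the cone $C(\D(n-1))$ as in Definition \ref{definition-cones-suspensions}. By $\H_{n-1}$-linearity of $\Phi$ it suffices to check this on the two generators of $C(\D(n-1))_r$ as an $\H_{n-1}$-module, namely $(1\otimes 1,0)$ with $1\otimes 1\in\D(n-1)_r$, and $(0,1\otimes 1)$ with $1\otimes 1\in\D(n-1)_{r-1}$.

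For the generator $(1\otimes 1,0)$, the cone differential returns $(\partial^{r}(1\otimes 1),0)$, and unpacking both sides of the chain-map identity one finds that they reduce to the same expression
\[
q^{-(r-1)}\sum_{j=0}^{r}(-1)^{j}q^{-j}\,T_{(n-1)-r+j,\,(n-1)-r}\,T_{n,n-r}\otimes 1.
\]
The key algebraic input is on the $\partial^{r}\circ\Phi_{r}$ side: one must rewrite $\partial^{r}_{j}(T_{n,n-r-1}\otimes 1)$, and this is precisely the final case of Lemma \ref{lemma-four-cases} with $t=r+1$. That lemma contributes a factor of $q$ which combines with the $q^{-r}$ coming from $\Phi_{r}$ to yield the matching overall factor of $q^{-(r-1)}$.

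For the generator $(0,1\otimes 1)$, the cone differential returns $((-1)^{r}\cdot 1\otimes 1,\,\partial^{r-1}(1\otimes 1))$, and $\Phi_{r-1}$ sends this to
\[
(-1)^{r}q^{-(r-1)}\,T_{n,n-r}\otimes 1 \;+\; q\sum_{j=0}^{r-1}(-1)^{j}q^{-j}\,T_{n-r+j,n-r}\otimes 1.
\]
The other side of the chain-map identity is $q\,\partial^{r}(1\otimes 1)=q\sum_{j=0}^{r}(-1)^{j}q^{-j}\,T_{n-r+j,n-r}\otimes 1$, and the $j=r$ term of this latter sum is exactly $(-1)^{r}q^{-(r-1)}\,T_{n,n-r}\otimes 1$. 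Thus the isolated term produced by the cone's sign $(-1)^{r}$ reinstates the $j=r$ summand that is absent from $\partial^{r-1}$ on $\D(n-1)$, while the $j=0,\ldots,r-1$ terms agree directly on both sides.

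The main obstacle is not conceptual but bookkeeping: at each stage the tensor products are taken over a different subalgebra $\H_{n-r-\ast}$, and one must verify that elements can be moved across the tensor as claimed. The only genuine piece of algebra is the single invocation of Lemma \ref{lemma-four-cases} in case $t=r+1$; the rest is forced by the definitions of $\Phi$, of the cone, and of the differential on $\D(n)$, whose powers of $q$ have been calibrated precisely so that this identity holds on the nose.
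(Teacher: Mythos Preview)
Your proof is correct and follows essentially the same approach as the paper: reduce to the two $\H_{n-1}$-generators, invoke the $t=r+1$ case of Lemma~\ref{lemma-four-cases} for $(1\otimes 1,0)$, and for $(0,1\otimes 1)$ observe that the $j=r$ summand of $\partial^r$ on $\D(n)$ supplies exactly the extra $(-1)^r$-term produced by the cone differential. The paper carries out the same computation more verbosely, writing out each intermediate step, but the structure and the single substantive ingredient (Lemma~\ref{lemma-four-cases}) are identical.
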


\begin{proof}
	The map $\Phi$ is by definition $\H_{n-1}$-linear.
	In each degree $r$, $C(\D(n-1))$ has one or two generators
	as an $\H_{n-1}$-module, 
	namely $(1\otimes 1,0)$ (only when $-1\leqslant r\leqslant n-2$)
	and $(0,1\otimes 1)$ (only when $0\leqslant r\leqslant n-1)$.
	So it is sufficient for us to verify that $\Phi$ commutes with
	the differentials when applied to these two generators.
	Recall from Definition~\ref{definition-cones-suspensions}
	that the differential of $C(\D(n-1))$ is given by
	\[
		\partial^r(x\otimes 1,y\otimes 1)
		=
		(\partial^r(x\otimes 1)+(-1)^r (y\otimes 1),
		\partial^{r-1}(y\otimes 1)).
	\]

	First we consider 
	$\partial^r(\Phi_r(1\otimes 1,0))$.
	The final case of Lemma~\ref{lemma-four-cases} gives
	\[
		\partial^r_j(T_{n,n-r-1}\otimes 1)
		=qT_{(n-1)-r+j,(n-1)-r}(T_{n,n-(r-1)-1}\otimes 1)
	\]
	so that we have
	\begin{align*}
		\partial^r(\Phi_r(1\otimes 1,0))
		&=
		\sum_{j=0}^r(-1)^jq^{-j}\partial^r_j\Phi_r(1\otimes 1,0)
		\\
		&=
		\sum_{j=0}^r(-1)^jq^{-j}q^{-r}\partial^r_j(T_{n,n-r-1}\otimes 1)
		\\
		&=
		\sum_{j=0}^r(-1)^jq^{-j}q^{-(r-1)}
		T_{(n-1)-r+j,(n-1)-r}(T_{n,n-(r-1)-1}\otimes 1)
		\\
		&=
		\sum_{j=0}^r(-1)^jq^{-j}
		\Phi_{r-1}(T_{(n-1)-r+j,(n-1)-r}\otimes 1,0)
		\\
		&=
		\sum_{j=0}^r(-1)^jq^{-j}
		\Phi_{r-1}(\partial_j^r(1\otimes 1),0)
		\\
		&=
		\Phi_{r-1}(\partial^r(1\otimes 1),0)
		\\
		&=
		\Phi_{r-1}(\partial^r(1\otimes 1,0))
	\end{align*}
	as required.
	Next we consider $\partial^r(\Phi_r(0,1\otimes 1))$:
	\begin{align*}
		\partial^r(\Phi_r(0,1\otimes 1))
		&=
		\sum_{j=0}^r(-1)^jq^{-j}\partial^r_j\Phi_r(0,1\otimes 1)
		\\
		&=
		\sum_{j=0}^r(-1)^jq^{-j}q\partial^r_j(1\otimes 1)
		\\
		&=
		\sum_{j=0}^r(-1)^jq^{-j}q(T_{n-r+j,n-r}\otimes 1)
	\end{align*}
	The sum of all but the final term can be simplified as:
	\begin{align*}
		\sum_{j=0}^{r-1}(-1)^jq^{-j}q(T_{n-r+j,n-r}\otimes 1)
		&=
		\sum_{j=0}^{r-1}(-1)^jq^{-j}\Phi_{r-1}(0,T_{n-r+j,n-r}\otimes 1)
		\\
		&=
		\sum_{j=0}^{r-1}(-1)^jq^{-j}
		\Phi_{r-1}(0,T_{(n-1)-(r-1)+j,(n-1)-(r-1)}\otimes 1)
		\\
		&=
		\sum_{j=0}^{r-1}
		(-1)^jq^{-j}\Phi_{r-1}(0,\partial^{r-1}_j(1\otimes 1))
		\\
		&=
		\Phi_{r-1}(0,\partial^{r-1}(1\otimes 1))
	\end{align*}
	And the final term can be simplified as:
	\begin{align*}
		(-1)^rq^{-r+1}(T_{n,n-r}\otimes 1)
		&=
		(-1)^rq^{-(r-1)}(T_{n,n-(r-1)-1}\otimes 1)
		\\
		&=
		(-1)^r\Phi_{r-1}(1\otimes 1,0)
	\end{align*}
	So altogether we have 
	\begin{align*}
		\partial^r(\Phi_r(0,1\otimes 1))
		&=
		\Phi_{r-1}(0,\partial^{r-1}(1\otimes 1))
		+
		(-1)^r\Phi_{r-1}(1\otimes 1,0)
		\\
		&=
		\Phi_{r-1}((-1)^r(1\otimes 1),\partial^{r-1}(1\otimes 1))
		\\
		&=
		\Phi_{r-1}(\partial^r(0,1\otimes 1)).
	\end{align*}
	This completes the proof.
\end{proof}

\subsection{Defining $\Psi$}\label{subsection-psi}

Recall that the isomorphism
\[\Psi\colon R\S_{n-1}\otimes_{R\S_{n-p-1}}\Sigma^{p+1}\C'(n-p-1)
\xrightarrow{\ \cong\ } F_p'/F_{p-1}'\]
is defined by 
$\Psi_r(\sigma\otimes(1\otimes 1)) = \sigma s_{n,n-p}\otimes 1$
for $\sigma\in R\S_{n-1}$, or in other words, the map which sends
the generator $1\otimes(1\otimes 1)$ to $s_{n,n-p}\otimes 1$.
In the present section we will extend this map to the Iwahori-Hecke setting,
i.e.~a map
\[
	\Psi\colon \H_{n-1}\otimes_{\H_{n-p-1}}\Sigma^{p+1}\D(n-p-1)
	\to F_p/F_{p-1},
\]
and we will prove that it is a well-defined chain map.
Before defining our map, let us elaborate on its domain and codomain.
We work in a fixed degree $(p+1)+r$ for $-1\leqslant r\leqslant n-p-1$.
\begin{itemize}
	\item
	In degree $(p+1)+r$, the domain is 
	\begin{align*}
		\H_{n-1}&\otimes_{\H_{n-p-1}} 
		\Sigma^{p+1}
		\D(n-p-1)_{(p+1)+r}
		\\
		&=
		\H_{n-1}\otimes_{\H_{n-p-1}} \D(n-p-1)_r
		\\
		&=
		\H_{n-1}\otimes_{\H_{n-p-1}} 
		\left(\H_{n-p-1}\otimes_{\H_{n-p-r-2}}\t\right)
		\\
		&\cong
		\H_{n-1}\otimes_{\H_{n-p-r-2}}\t
	\end{align*}
	and so it is generated as an $\H_{n-1}$-module by
	the element
	$1\otimes(1\otimes 1)$.

	\item
	In degree $(p+1)+r$,
	the codomain $F_p/F_{p-1}$ is a subquotient of
	\[
		\D(n)_{(p+1)+r}
		=
		\H_n\otimes_{\H_{n-p-r-2}}\t,
	\]	
	and it is generated as an $\H_{n-1}$-module by the element
	$T_{n,n-p}\otimes 1$.
	(Recall that $F_p$ in this degree is the
	$\H_{n-1}$ submodule generated by
	$T_{n,n-[(p+1)+r]-1}\otimes 1,
	T_{n,n}\otimes 1,\ldots,T_{n,n-p}\otimes 1$, 
	and that $F_{p-1}$ is the
	$\H_{n-1}$-submodule generated by all these except the last.)
	For brevity we
	will write elements of the quotient $F_p/F_{p-1}$ as
	elements of $F_p$, i.e.~we will not indicate additive cosets or
	equivalence classes.
\end{itemize}

\begin{definition}\label{definition-Psi}
	Let $p\geqslant 1$.
	We define 
	\[
		\Psi\colon
		\H_{n-1}\otimes_{\H_{n-p-1}} \Sigma^{p+1}\D(n-p-1)
		\longrightarrow
		F_p/F_{p-1}
	\]
	to be the $\H_{n-1}$-linear map that in degree $(p+1)+r$
	is defined on the generator by 
	\[
		\Psi_{(p+1)+r}(1\otimes(1\otimes 1))
		=
		T_{n,n-p}\otimes 1.
	\]
\end{definition}

One can see immediately that this specialises to the
original definition in the case $q=1$.

\begin{lemma}
	$\Psi$ is well defined.
\end{lemma}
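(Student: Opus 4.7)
The plan is to verify that the prescribed formula respects the two balancing relations present in the iterated tensor product expression of the source, so that the universal property of tensor products produces a well-defined map.

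First I would unpack the source in degree $(p+1)+r$, where $-1\leqslant r\leqslant n-p-2$. Observing that
\[
\H_{n-1}\otimes_{\H_{n-p-1}}\Sigma^{p+1}\D(n-p-1)_{(p+1)+r}
=
\H_{n-1}\otimes_{\H_{n-p-1}}\bigl(\H_{n-p-1}\otimes_{\H_{n-p-r-2}}\t\bigr),
\]
a typical element has the form $x\otimes(y\otimes 1)$ with $x\in\H_{n-1}$ and $y\in\H_{n-p-1}$, and the formula combined with $\H_{n-1}$-linearity forces its image to be $xyT_{n,n-p}\otimes 1\in\D(n)_{(p+1)+r}=\H_n\otimes_{\H_{n-p-r-2}}\t$. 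This candidate image automatically lies in $F_p$: since $\H_{n-p-1}\subseteq\H_{n-1}$ we have $xy\in\H_{n-1}$, and $T_{n,n-p}\otimes 1$ is one of the $\H_{n-1}$-module generators of $F_p$ listed in Definition~\ref{definition-filtration} (corresponding to $t=p$, which sits in the permitted range $0\leqslant t\leqslant\min((p+1)+r,p)=p$ because $r\geqslant -1$). Hence passage to the quotient $F_p/F_{p-1}$ causes no problem.

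Next I would verify the two balance conditions. The outer balance over $\H_{n-p-1}$ is immediate: for $\alpha\in\H_{n-p-1}$, both representatives $(x\alpha)\otimes(y\otimes 1)$ and $x\otimes(\alpha y\otimes 1)$ are sent to the identical element $x\alpha y T_{n,n-p}\otimes 1$. The substantive check is the inner balance over $\H_{n-p-r-2}$: for $\beta\in\H_{n-p-r-2}$ one must show
\[
xy\beta\, T_{n,n-p}\otimes 1
\;=\;
xy\,T_{n,n-p}\otimes(\beta\cdot 1)
\qquad\text{in}\qquad
\H_n\otimes_{\H_{n-p-r-2}}\t,
\]
and by $R$-linearity it suffices to treat $\beta=T_k$ for each algebra generator of $\H_{n-p-r-2}$, i.e.~for $1\leqslant k\leqslant n-p-r-3$ (the claim is vacuous if that range is empty).

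The key observation, which I expect to be the only substantive step, is an index count. The factors of $T_{n,n-p}=T_{n-1}T_{n-2}\cdots T_{n-p}$ involve only generators with indices in $\{n-p,\ldots,n-1\}$, while any $k\leqslant n-p-r-3$ satisfies $|k-j|\geqslant (n-p)-(n-p-r-3)=r+3\geqslant 2$ for every such $j$. Part~\eqref{T-commute} of Proposition~\ref{T-moves} then tells us that $T_k$ commutes with $T_{n,n-p}$, and since $T_k\in\H_{n-p-r-2}$ may legally be pushed across the tensor, one computes
\[
xy T_k T_{n,n-p}\otimes 1
= xy T_{n,n-p}T_k\otimes 1
= xy T_{n,n-p}\otimes (T_k\cdot 1)
= q\cdot xy T_{n,n-p}\otimes 1,
\]
which matches $xyT_{n,n-p}\otimes(T_k\cdot 1)=q\cdot xyT_{n,n-p}\otimes 1$, as required. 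A pleasant feature is that the quadratic relation $T_k^2=(q-1)T_k+q$ plays no role; the entire verification reduces to careful bookkeeping of which subalgebra contains which generators, driven by the single inequality $r+3\geqslant 2$.
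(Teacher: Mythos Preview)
Your proof is correct and follows essentially the same approach as the paper: both arguments reduce the well-definedness check to showing that $T_k$ commutes with $T_{n,n-p}$ for $1\leqslant k\leqslant n-p-r-3$, which follows from the index gap $k\leqslant n-p-2$. Your version is slightly more explicit in that you work with the iterated tensor product and separately verify the outer $\H_{n-p-1}$-balance and the inner $\H_{n-p-r-2}$-balance (and check the image lands in $F_p$), whereas the paper first collapses the domain to $\H_{n-1}\otimes_{\H_{n-p-r-2}}\t$ and checks only the single resulting balance condition; but the substance is identical.
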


\begin{proof}
	Taking $p\geqslant 1$ as in the definition of $\Psi$,
	and working in degree $(p+1)+r$ for $r\geqslant -1$,
	the domain of $\Psi_{(p+1)+r}$ is
	\[
		\H_{n-1}\otimes_{\H_{n-p-1}} 
		\left(\H_{n-p-1}\otimes_{\H_{n-p-r-2}}\t\right)
		\cong
		\H_{n-1}\otimes_{\H_{n-p-r-2}}\t
	\]
	and so we must show that for $1\leqslant k\leqslant n-r-p-3$,
	$\Psi_{(p+1)+r}$ sends the elements $T_k\otimes (1\otimes 1)$
	and $1\otimes (1\otimes (T_k\cdot 1))$ to the same element.
	Since
	\[
		\Psi_{(p+1)+r}(T_k \otimes(1\otimes 1))
		=T_k\cdot \Psi_{(p+1)+r}(1 \otimes(1\otimes 1))
		=T_k\cdot (T_{n,n-p}\otimes 1)
		=(T_kT_{n,n-p})\otimes 1
	\]
	and
	\[
		\Psi_{(p+1)+r}(1 \otimes(1\otimes (T_k\cdot 1)))
		=q\cdot \Psi_{(p+1)+r}(1 \otimes(1\otimes 1))
		=q(T_{n,n-p}\otimes 1)
		=T_{n,n-p}\otimes (T_k\cdot 1)
	\]
	it is enough to show that $T_k$ 
	commutes with $T_{n,n-p}$.
	And indeed, since
	$k\leqslant n-r-p-3\leqslant n-p-2$, this follows immediately.
\end{proof}

\begin{lemma}
	$\Psi$ is a chain map.
\end{lemma}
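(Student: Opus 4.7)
The plan is to verify $\partial\circ\Psi = \Psi\circ\partial$ on generators, exploiting $\H_{n-1}$-linearity to reduce to a single check per degree. In degree $(p+1)+r$ the domain $\H_{n-1}\otimes_{\H_{n-p-1}}\Sigma^{p+1}\D(n-p-1)$ is generated as an $\H_{n-1}$-module by $1\otimes(1\otimes 1)$, so it suffices to verify the identity on this element for each $-1\leqslant r\leqslant n-p-2$.

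For the left-hand side $\Psi_{p+r}\partial^{(p+1)+r}(1\otimes(1\otimes 1))$: since the suspension convention $d^r_{\Sigma X}=d^{r-1}_X$ introduces no sign and $\H_{n-1}$ sits in degree zero, the differential on the domain is $\mathrm{id}\otimes\partial^r_{\D(n-p-1)}$. Unpacking $\partial^r_{\D(n-p-1)}(1\otimes 1)$ via the definition of $\D(n-p-1)$ and moving the resulting factor $T_{(n-p-1)-r+j,(n-p-1)-r}$ across the outer tensor product, I would obtain
\[
\Psi_{p+r}\partial^{(p+1)+r}(1\otimes(1\otimes 1))
=\sum_{j=0}^{r}(-1)^j q^{-j}\,T_{(n-p-1)-r+j,\,(n-p-1)-r}\bigl(T_{n,n-p}\otimes 1\bigr).
\]

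For the right-hand side $\partial^{(p+1)+r}(T_{n,n-p}\otimes 1)$, I would apply Lemma~\ref{lemma-four-cases} with parameters $R=(p+1)+r$ and $t=p$ to the sum $\sum_{k=0}^{R}(-1)^k q^{-k}\partial^R_k(T_{n,n-p}\otimes 1)$ and analyse each range of $k$ separately. Case~1 ($k\leqslant r$) contributes $T_{n-R+k,n-R}(T_{n,n-p}\otimes 1)$, which is precisely the $j=k$ summand on the left. Case~2 ($k=r+1$) contributes $T_{n,n-R}\otimes 1=T_{n,n-(R-1)-1}\otimes 1$, a first-kind generator of $F_{p-1}$ in degree $R-1$, hence zero mod $F_{p-1}$. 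Case~3 ($r+2\leqslant k\leqslant R$) contributes two summands: the first has factor $T_{n,n-R}\otimes 1$ (again a first-kind generator of $F_{p-1}$), and the second has factor $T_{n,n-(p-1)}\otimes 1$ (a second-kind generator of $F_{p-1}$, since $p-1\leqslant\min(R-1,p-1)$); in each case the accompanying coefficient $T_{n-R+k-1,\,*}$ lies in $\H_{n-1}$ because $k\leqslant R$ gives $n-R+k-1\leqslant n-1$, so both summands vanish mod $F_{p-1}$. The degenerate case $r=-1$ requires a separate comment: the LHS is automatically zero (the suspended complex has $\D(n-p-1)_{-2}=0$), and $F_p/F_{p-1}$ vanishes in degree $p-1$ because the generator lists for $F_p$ and $F_{p-1}$ coincide there.

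The main obstacle will be bookkeeping: threading the three cases of Lemma~\ref{lemma-four-cases} through the filtration, verifying that every auxiliary factor $T_{n-R+k,n-R}$ or $T_{n-R+k-1,n-p}$ really does lie in $\H_{n-1}$, and confirming that all Case~2 and Case~3 contributions are absorbed by $F_{p-1}$ so that only Case~1 survives to match the LHS. The powers of $q$ require no further adjustment because both $\partial^{(p+1)+r}$ on the codomain and the $r$-indexed sum coming from $\partial^r_{\D(n-p-1)}$ on the domain use the same factors $(-1)^k q^{-k}$, and the Case~1 matching is index-for-index.
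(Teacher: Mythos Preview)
Your proposal is correct and follows essentially the same approach as the paper: reduce to the generator $1\otimes(1\otimes 1)$ by $\H_{n-1}$-linearity, apply Lemma~\ref{lemma-four-cases} with $t=p$ and the ambient degree $R=(p+1)+r$, observe that the contributions from $k\geqslant r+1$ land in $F_{p-1}$ and hence vanish in the quotient, and match the surviving $k\leqslant r$ terms face-by-face with the domain differential. The paper organises this as a verification that $\partial^{(p+1)+r}_j\circ\Psi_{(p+1)+r}=\Psi_{(p+1)+(r-1)}\circ\partial^r_j$ for each $j\leqslant r$, but the content is identical; your explicit treatment of the degenerate case $r=-1$ via $(F_p/F_{p-1})_{p-1}=0$ is a minor addition that the paper leaves implicit.
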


\begin{proof}
	We take $p\geqslant 1$ as in the definition of $\Psi$,
	and we take $r\geqslant -1$.
	Consider the differentials going from degree $r+p+1$ to $r+p$.
	In the domain of $\Psi$ the differential in this degree is
	induced by the boundary map
	\[
		\partial^r=\sum_{j=0}^r(-1)^j q^{-j}\partial^r_j
	\]
	of $\D(n-p-1)$.
	And in the codomain of $\Psi$ the differential in degree $(p+1)+r$ is
	\[
		\partial^{(p+1)+r}
		=\sum_{j=0}^{(p+1)+r}(-1)^j q^{-j}\partial^{(p+1)+r}_j.
	\]
	Now, the codomain is $(F_p/F_{p-1})_{(p+1)+r}$, which is generated
	as an $\H_{n-1}$-module by the single element $T_{n,n-p}\otimes 1$.
	The first three cases of Lemma~\ref{lemma-four-cases} 
	give us the following:
	\[
		\partial^{r+p+1}_j(T_{n,n-p}\otimes 1)
		=
		\begin{cases}
			T_{n-r-p-1+j,n-r-p-1}(T_{n,n-p}\otimes 1)
			&
			j\leqslant r
			\\
			\\
			T_{n,n-(p+r)-1}\otimes 1
			&
			j=r+1
			\\
			\\
			(q-1) T_{n-r-p+j-2,n-p}(T_{n,n-(r+p)-1}\otimes 1)
			\\
			\qquad + q T_{n-r-p+j-2,n-r-p-1}(T_{n,n-(p-1)}\otimes 1)
			&
			j\geqslant r+2
		\end{cases}
	\]
	Both of $T_{n,n-(p+r)-1}\otimes 1$ and $T_{n,n-(p-1)}\otimes 1$ 
	are generators of the $\H_{n-1}$-module $F_{p-1}$ 
	in degree $p+r$, and 
	$T_{n-r-p+j-2,n-p}$ and $T_{n-r-p+j-2,n-r-p-1}$ lie in $\H_{n-1}$.
	So in the second and third cases the results vanish in the quotient
	$F_p/F_{p-1}$, and we obtain:
	\[
		\partial^{r+p+1}_j(T_{n,n-p}\otimes 1)
		=
		\begin{cases}
			T_{n-r-p-1+j,n-r-p-1}(T_{n,n-p}\otimes 1)
			&
			j\leqslant r
			\\
			0
			&
			j\geqslant r+1
		\end{cases}
	\]
	This means that in order to verify that 
	$\Psi_{(p+1)+(r-1)}\circ\partial^{r}
	=\partial^{(p+1)+r}\circ\Psi_{(p+1)+r}$, it will suffice to check that
	$\Psi_{(p+1)+(r-1)}\circ\partial_j^{r} 
	=\partial^{(p+1)+r}_j\circ\Psi_{(p+1)+r}$
	for $0\leqslant j\leqslant r$.
	And since the domain is generated as an $\H_{n-1}$ module by
	the element $1\otimes(1\otimes 1)$, it is enough to verify that
	\[
		\partial^{r+p+1}_j\Psi_{(p+1)+r}(1\otimes(1\otimes 1))
		=
		\Psi_{(p+1)+(r-1)}\partial^r_j(1\otimes(1\otimes 1)).
	\]
	And indeed:
	\begin{align*}
		\partial^{r+p+1}_j\Psi_{(p+1)+r}(1\otimes(1\otimes 1))
		&=
		\partial^{r+p+1}_j(T_{n,n-p}\otimes 1)
		\\
		&=
		T_{n-r-p-1+j,n-r-p-1}(T_{n,n-p}\otimes 1)
		\\
		&=
		\Psi_{(p+1)+(r-1)}(1\otimes (T_{n-r-p-1+j,n-r-p-1}\otimes 1))
		\\
		&=
		\Psi_{(p+1)+(r-1)}(1\otimes (T_{(n-p-1)-r+j,(n-p-1)-r}\otimes 1))
		\\
		&=
		\Psi_{(p+1)+(r-1)}(1\otimes (\partial^r_j(1\otimes 1)))
		\\
		&=
		\Psi_{(p+1)+(r-1)}\partial^r_j(1\otimes (1\otimes 1))
	\end{align*}
	Here the third equality used the fact that $T_{n-r-p-1+j,n-r-p-1}$
	lies in $\H_{n-1}$ and $\Psi$ is $\H_{n-1}$-linear.
\end{proof}

\subsection{$\Phi$ and $\Psi$ are isomorphisms}
\label{subsection-isomorphism}

So far in this section we have defined our 
chain maps
$\Phi\colon C(\D(n-1))\to F_0$ and
$\Psi\colon\H_{n-1}\otimes_{\H_{n-p-1}}\Sigma^{p+1}\D(n-p-1)
\to F_p/F_{p-1}$.
Now we will prove Theorem~\ref{theorem-filtration},
which states that $\Phi$ and $\Psi$ are isomorphisms,
and this 
will conclude the programme set out in section~\ref{section-Dn-overview}.

In order to prove that $\Phi$ and $\Psi$ are isomorphisms,
we will obtain bases for their domains and codomains, and prove
that they induce bijections between these bases.
Now, $\Phi$ and $\Psi$ are maps of $\H_{n-1}$-modules,
whose domains are built out of tensor products of the form
$\H_{n-1}\otimes_{\H_k}\t$, and 
we understand from Proposition~\ref{proposition-basis}
how to give a basis for $\H_{n-1}\otimes_{\H_k}\t$
as an $\H_{n-1}$-module using the 
distinguished coset representatives for $\S_{n-1}/\S_{k}$.
However, the codomains of $\Phi$ and $\Psi$ are built from tensor
products of the form $\H_n\otimes_{\H_k}\t$, and
in order to obtain a basis of this \emph{as an $\H_{n-1}$-module},
we will need to study the distinguished double coset representatives
of $\S_{n-1}\backslash\S_n/\S_k$.
The relevant theory was described in sections~\ref{subsection-cosets}
and~\ref{subsection-iwahori-hecke}.

In what follows we will consider the Coxeter system
$(\S_n,S_n)$ where $S_n=\{s_1,\ldots,s_{n-1}\}$.
We will similarly write $S_k=\{s_1,\ldots,s_{k-1}\}$,
so that the parabolic subgroup of $\S_{n}$ generated by
$S_{k}$ is precisely $\S_k$.
Note that $S_0=S_1=\emptyset$, corresponding to the fact
that $\S_0$ and $\S_1$ are both trivial groups.
We are interested in understanding generators of 
$\D(n)_r = \H_n\otimes_{\H_{n-r-1}}\t$,
which is to say, the distinguished representatives 
$X_{S_{n-r-1}}^{-1}$ for the left
cosets $\S_n/\S_{n-r-1}$.  In particular, in order to study
the filtration $\{F_p\}$ of $\D(n)$, we consider 
$\H_n\otimes_{\H_{n-r-1}}\t$ as an $\H_{n-1}$-module, so that
we will need to compute the distinguished representatives 
$X_{S_{n-1}S_{n-r-1}}$ of the double cosets
$\S_{n-1}\backslash \S_n /\S_{n-r-1}$.

We will need to once again make use of the elements
\[
	s_{ba}=s_{b-1}\cdots s_a,
	\qquad
	1\leqslant a\leqslant b\leqslant n
\]
that we introduced in Definition~\ref{definition-sba},
and which coincide with the $T_{ba}$ in the case $q=1$.
Recall from Remark~\ref{remark-sab-permutation} that
$s_{ba}$ is the cycle $s_{ba}=(b\cdots a)$
which decreases each of $a+1,\ldots,b$ by $1$, and which sends $a$ to $b$.
We will in particular need the elements
\[
	s_{nj} = (n \cdots j)
\]
which move $j$ into position $n$ while preserving the order of the 
remaining letters.

We begin by identifying
the distinguished representatives for $\S_{n-1}\backslash\S_n$.

\begin{lemma}\label{lemma-left-minimal}
	An element of $\S_n$ is $(S_{n-1},\emptyset)$-reduced
	if and only if it has the form $s_{nj}$ for some
	$j$ in the range $1\leqslant j\leqslant n$.
	In other words,
	\[
		X_{S_{n-1}}
		=
		\{s_{nn},\ldots,s_{n1}\}.
	\]
\end{lemma}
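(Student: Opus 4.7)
The plan is to apply Theorem~\ref{theorem-distinguished}, which identifies $X_{S_{n-1}}$ as the unique set of minimum-length representatives for the right cosets $\S_{n-1}\backslash\S_n$. Since there are $[\S_n:\S_{n-1}] = n$ such cosets, and the lemma proposes exactly $n$ candidates $s_{nn},\ldots,s_{n1}$, the argument has two parts: show that the $s_{nj}$ lie in $n$ distinct cosets (hence exhaust them), and show that each realizes the minimum length in its coset.

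For the first part, I would observe that $\S_{n-1}$ is the stabilizer of $n$ under the natural action of $\S_n$ on $\{1,\ldots,n\}$, since its generators $s_1,\ldots,s_{n-2}$ all fix $n$. It follows that two permutations $\sigma,\tau$ determine the same right coset $\S_{n-1}\sigma = \S_{n-1}\tau$ if and only if $\sigma^{-1}(n) = \tau^{-1}(n)$. From the cycle description $s_{nj} = (n\ n{-}1\ \cdots\ j)$ recorded in Remark~\ref{remark-sab-permutation}, one reads off $s_{nj}^{-1}(n) = j$, so the $n$ elements $s_{n1},\ldots,s_{nn}$ fall into all $n$ distinct cosets.

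For the second part, consider any $\sigma$ in the coset containing $s_{nj}$, so that $\sigma(j) = n$. The length $\ell(\sigma)$ equals the number of inversions of $\sigma$, and each of the pairs $(j,k)$ with $j < k \leqslant n$ is an inversion because $\sigma(j) = n > \sigma(k)$. Hence $\ell(\sigma) \geqslant n-j$. On the other hand, the defining expression $s_{nj} = s_{n-1}s_{n-2}\cdots s_j$ shows $\ell(s_{nj}) \leqslant n-j$, so $s_{nj}$ achieves the minimum. By part~(4) of Theorem~\ref{theorem-distinguished}, this identifies $s_{nj}$ as the distinguished representative of its coset, completing the proof.

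No single step looks like a serious obstacle; the argument is essentially a bookkeeping exercise tying together the cycle description from Remark~\ref{remark-sab-permutation}, the identification of $\S_{n-1}$ as a point stabilizer, and the characterization of distinguished coset representatives via minimum length. The only mild care needed is to keep straight the interplay between left multiplication by $\S_{n-1}$ (which fixes $\sigma^{-1}(n)$) and the corresponding action on $n$.
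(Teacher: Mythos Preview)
Your proof is correct, but it takes a genuinely different route from the paper's. You use the concrete realisation of $\S_n$ as permutations of $\{1,\ldots,n\}$: you identify $\S_{n-1}$ as the point stabiliser of $n$, separate the cosets by the value of $\sigma^{-1}(n)$, and invoke the inversion formula for length to show each $s_{nj}$ is shortest in its coset. The paper instead works purely inside the Coxeter presentation: it observes that the words $s_{n-1}\cdots s_j$ admit no $M$-moves (hence are reduced) and do not begin with a letter of $S_{n-1}$, and then shows by a direct case analysis on the next letter that any $(S_{n-1},\emptyset)$-reduced element must equal one of these words. Your argument is shorter and more elementary for this particular statement; the paper's word-combinatorial argument, by contrast, is in keeping with the philosophy stressed in section~\ref{subsection-method} that one should avoid thinking of $\H_n$ in terms of permutations and work only with the Coxeter presentation --- a discipline that matters for the later arguments even if it is not strictly necessary here.
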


\begin{proof}
	First we show that the given elements are all
	$(S_{n-1},\emptyset)$-reduced.  To do so,
	we need only show that they have no reduced expression
	beginning with an element of $S_{n-1}$.
	But the given expressions for the elements clearly admit
	no M-moves, and are therefore reduced,
	and since they do not begin with elements of $S_{n-1}$,
	this makes clear that the elements are $(S_{n-1},\emptyset)$-reduced.

	Now let $w$ be $(\S_{n-1},\emptyset)$-reduced.
	Then either $w=e$ or the first letter of $w$ must be $s_{n-1}$,
	so let
	$n\geqslant j\geqslant 1$ be the smallest element
	such that $w$ has a reduced expression beginning
	$s_{n-1}\cdots s_j$.  (Thus the case $j=n$ corresponds to $w=e$.)
	We will show that $w=s_{n-1}\cdots s_j$.
	Suppose not: then $w$ has a reduced expression beginning
	$s_{n-1}\cdots s_js_i$ for some $i=1,\ldots,(n-1)$.
	We cannot have $i=j$ for then the expression is not reduced.
	We cannot have $i=j-1$ by minimality of $j$.
	We cannot have $i<j-1$ for then $s_{n-1}\cdots s_js_i=
	s_i s_{n-1}\cdots s_j$ and $w$ is not $(S_{n-1},\emptyset)$-reduced.
	And finally we cannot have $i>j$ because then
	$s_{n-1}\cdots s_js_i = s_{i-1}s_{n-1}\cdots s_j$ is again not
	$\S_{n-1}$-reduced on the left.
	So there is no such $i$.
\end{proof}

Now we wish to study the distinguished representatives of the double
cosets $\S_{n-1}\backslash\S_n/\S_{n-r-1}$.

\begin{lemma}\label{lemma-double-coset-reps}
	For $r$ in the range $-1\leqslant r\leqslant (n-1)$,
	a complete set of distinguished double coset representatives
	of $\S_{n-1}\backslash \S_n/\S_{n-r-1}$ 
	is given by:
	\[
		X_{S_{n-1}S_{n-r-1}}
		=
		\left\{\begin{array}{ll}
			\{s_{nn},\ldots,s_{n,n-r-1}\},
			&
			-1\leqslant r\leqslant (n-2)
			\\
			\{s_{nn},\ldots,s_{n1}\},
			& 
			r=(n-1)
		\end{array}\right.
	\]
	Note that the cases $r=n-2$ and $r=n-1$ produce the same value for
	$X_{S_{n-1}S_{n-r-1}}$.  This is due to the fact that 
	$S_1$ and $S_0$ are both empty, corresponding to the fact that
	$\S_1$ and $\S_0$ are both the trivial group.
\end{lemma}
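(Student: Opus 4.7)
The plan is to exploit the factorisation $X_{S_{n-1}S_{n-r-1}} = X_{S_{n-1}} \cap X_{S_{n-r-1}}^{-1}$ recorded in subsection~\ref{subsection-cosets}, together with the description of $X_{S_{n-1}}$ already obtained in Lemma~\ref{lemma-left-minimal}. By that lemma every element of $X_{S_{n-1}}$ has the form $s_{nj}$ for some $1 \leqslant j \leqslant n$, so it remains only to identify which of the $s_{nj}$ are also $(\emptyset, S_{n-r-1})$-reduced, i.e.\ have no reduced expression ending with a letter of $S_{n-r-1} = \{s_1,\ldots,s_{n-r-2}\}$.

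The key step is to compute the right descent set of $s_{nj}$. I would first argue that the given expression $s_{n-1}s_{n-2}\cdots s_j$ is the \emph{unique} reduced expression for $s_{nj}$: the indices decrease by one at each step, so neither a commutation $s_is_k = s_ks_i$ (which requires $|i-k|>1$ between adjacent letters) nor a braid move $s_is_{i+1}s_i = s_{i+1}s_is_{i+1}$ (which requires a subword of exactly this shape) can be applied, and by Matsumoto's theorem (Theorem~\ref{theorem-matsumoto}) this rules out any further reduced expression. Alternatively one could use the permutation description of Remark~\ref{remark-sab-permutation} together with the inversion count to see directly that $s_k$ is a right descent of $s_{nj}$ precisely when $s_{nj}(k) > s_{nj}(k+1)$, which happens only for $k = j$ (with $j < n$).

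Either way, the unique right descent of $s_{nj}$ is $s_j$ when $j < n$, and $s_{nn} = e$ has no descents at all. Thus $s_{nj}$ lies in $X_{S_{n-r-1}}^{-1}$ if and only if $s_j \notin S_{n-r-1}$, equivalently $j \geqslant n-r-1$ or $j = n$. For $-1 \leqslant r \leqslant n-2$ this picks out exactly $\{s_{nn}, s_{n,n-1}, \ldots, s_{n,n-r-1}\}$, while for $r = n-1$ one has $S_{n-r-1} = S_0 = \emptyset$ and every $s_{nj}$ survives, giving $\{s_{nn}, \ldots, s_{n1}\}$. The coincidence of the $r = n-2$ and $r = n-1$ answers then reflects exactly that $S_1 = S_0 = \emptyset$, as noted in the statement.

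The main obstacle, such as it is, is the descent computation; once the unique-reduced-expression observation is in place the rest is just bookkeeping in the indices. If one prefers to avoid the uniqueness argument, the permutation-theoretic inversion count gives an equally quick route to the right descent set, and the subsequent identification of $X_{S_{n-1}S_{n-r-1}}$ is immediate.
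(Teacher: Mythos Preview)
Your proposal is correct and follows essentially the same approach as the paper: both use $X_{S_{n-1}S_{n-r-1}} = X_{S_{n-1}}\cap X_{S_{n-r-1}}^{-1}$, invoke Lemma~\ref{lemma-left-minimal} to reduce to the elements $s_{nj}$, observe that the expression $s_{n-1}\cdots s_j$ admits no $M$-moves (hence is the unique reduced expression), and then read off which $s_{nj}$ end in a generator of $\S_{n-r-1}$. Your version is slightly more explicit about framing this as a right-descent computation, and offers the alternative inversion-count route, but the argument is the same.
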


\begin{proof}
	As in the proof of Lemma~\ref{lemma-left-minimal},
	the given expressions admit no $M$-moves.
	Since none of them begin with a generator
	of $\S_{n-1}$ or end with a generator of $\S_{n-r-1}$,
	they are $(S_{n-1},S_{n-r-1})$-reduced.
	They are therefore minimal double coset representatives.
	(Proposition~2.1.7 of~\cite{GeckPfeiffer}.)

	It remains to show that they are a complete set of minimal
	double coset representatives.
	But a minimal double coset representative is 
	$(S_{n-1},\emptyset)$-reduced,
	so by Lemma~\ref{lemma-left-minimal} it
	has the form $s_{n,j}$ for some $j$.
	And for $s_{n,j}$ to be $(\emptyset,S_{n-r-1})$-reduced, we must have
	$j\geqslant n-r-1$, so that $s_{nj}$ is one of the given elements.
\end{proof}

Now we will apply the Mackey formula for left-cosets
in order to understand distinguished representatives of $\S_n/\S_{n-r-1}$
in terms of $\S_{n-1}\backslash\S_n/\S_{n-r-1}$:

\begin{lemma}\label{lemma-mackey}
	Let $-1\leqslant r\leqslant (n-1)$.
	Then:
	\[
		(X_{S_{n-r-1}})^{-1}
		=
		\underbrace{(X_{S_{n-r-1}}^{S_{n-1}})^{-1}s_{n,n}
		\sqcup\cdots\sqcup
		(X_{S_{n-r-1}}^{S_{n-1}})^{-1}s_{n,n-r}}_{r+1\text{ terms}}
		\sqcup
		\underbrace{(X^{S_{n-1}}_{S_{n-r-2}})^{-1}s_{n,n-r-1}}
	\]
	Here the initial union of $r+1$ terms is empty in the case $r=-1$,
	while the final term is omitted in the case $r=(n-1)$.
	Note the difference between 
	$X^{S_{n-1}}_{S_{n-r-1}}$ in the first $r+1$ terms
	and 
	$X^{S_{n-1}}_{S_{n-r-2}}$ in the final term.
\end{lemma}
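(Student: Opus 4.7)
The plan is to apply the left-coset Mackey decomposition recalled in subsection~\ref{subsection-cosets},
\[
X_J^{-1} = \bigsqcup_{d\in X_{KJ}} (X^K_{K\cap \prescript{d}{}J})^{-1}\cdot d,
\]
with $K = S_{n-1}$ and $J = S_{n-r-1}$. Lemma~\ref{lemma-double-coset-reps} has already identified the index set $X_{S_{n-1}S_{n-r-1}}$ as the explicit cycles $s_{n,j}$ with $j$ in the stated range, so the entire task reduces to computing the intersection $S_{n-1}\cap\prescript{s_{n,j}}{}S_{n-r-1}$ for each such $j$ and matching it with $S_{n-r-1}$ or $S_{n-r-2}$, as appropriate.

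The key computation is the effect of conjugation by $s_{n,j}$, which by Remark~\ref{remark-sab-permutation} is the cycle $(n\,n-1\,\cdots\,j)$, on an adjacent transposition $s_i=(i,i+1)$. Only two sub-cases will occur for the generators $s_i$ of $S_{n-r-1}$: if $i\leq j-2$, then both $i$ and $i+1$ are fixed by $s_{n,j}$, so $\prescript{s_{n,j}}{}s_i=s_i$; and if $i=j-1$, then $s_{n,j}$ fixes $j-1$ but sends $j$ to $n$, so $\prescript{s_{n,j}}{}s_{j-1}=(j-1,n)$, which is not an element of $S_{n-1}$.

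I would then split into two ranges of $j$. When $j\geq n-r$, every generator $s_i\in S_{n-r-1}$ satisfies $i\leq n-r-2\leq j-2$, so is fixed and remains in $S_{n-1}$; hence $S_{n-1}\cap\prescript{s_{n,j}}{}S_{n-r-1}=S_{n-r-1}$, giving the first $r+1$ terms of the decomposition. When $j=n-r-1$ (possible only if $r\leq n-2$), the generators $s_1,\ldots,s_{n-r-3}$ of $S_{n-r-2}$ are still fixed and in $S_{n-1}$, while the remaining generator $s_{n-r-2}=s_{j-1}$ of $S_{n-r-1}$ is sent to $(n-r-2,n)\notin S_{n-1}$; so $S_{n-1}\cap\prescript{s_{n,j}}{}S_{n-r-1}=S_{n-r-2}$, which produces the final term. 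Substituting these intersections into the Mackey formula yields the stated disjoint union.

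There is no real obstacle beyond keeping track of the cycle-conjugation formula and handling the edge cases. For $r=-1$ the single coset representative is $s_{n,n}=e$, which falls into the $j=n-r-1=n$ branch with $X^{S_{n-1}}_{S_{n-1}}=\{e\}$, correctly recovering $X_{S_n}^{-1}=\{e\}$. For $r=n-1$ one has $S_{n-r-1}=\emptyset$, so every intersection is empty, the ``$j=n-r-1$'' branch does not arise, and the final term is correctly absorbed into the earlier ones, explaining its omission from the statement.
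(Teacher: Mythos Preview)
Your proof is correct and follows essentially the same route as the paper's: both apply the left-coset Mackey decomposition with $J=S_{n-r-1}$ and $K=S_{n-1}$, invoke Lemma~\ref{lemma-double-coset-reps} for the index set, and then compute $S_{n-1}\cap\prescript{d}{}S_{n-r-1}$ for each $d=s_{n,j}$. The only difference is cosmetic: the paper computes the conjugates using the commutation relations among the Coxeter generators, whereas you compute them via the cycle description $s_{n,j}=(n\ n{-}1\ \cdots\ j)$ from Remark~\ref{remark-sab-permutation}, which gives the same answers and handles the edge cases cleanly.
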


\begin{proof}
	We will use the Mackey decomposition of section~\ref{subsection-cosets},
	taking
	$S=S_n$, $J=S_{n-r-1}$ and $K=S_{n-1}$, so that we obtain
	\begin{equation}\label{equation-mackey-proof}
		X_{S_{n-r-1}}^{-1} 
		= \bigsqcup_{d\in X_{S_{n-1}S_{n-r-1}}}
		(X^{S_{n-1}}_{S_{n-1}
		\cap 
		\prescript{d}{}\!S_{n-r-1}})^{-1}\cdot d
	\end{equation}
	Recall from section~\ref{subsection-cosets}
	that in the expression
	$X^{S_{n-1}}_{S_{n-1}\cap \prescript{d}{}\!S_{n-r-1}}$
	the superscript $d$ denotes conjugation, so that 
	\[
		\prescript{d}{}\!S_{n-r-1} 
		= \{\prescript{d}{}x\mid x\in S_{n-r-1}\}
		= \{dxd^{-1}\mid x\in S_{n-r-1}\}.
	\]
	So we must work out $S_{n-1}\cap\prescript{d}{}\!S_{n-r-1}$ for
	$d\in X_{S_{n-1}S_{n-r-1}}$.
	For $0\leqslant r\leqslant (n-1)$ and $0\leqslant k\leqslant r$ 
	we have
	\begin{align*}
		S_{n-1}\cap\prescript{s_{n,n-k}}{}\!S_{n-r-1}
		&=
		S_{n-1}\cap\prescript{s_{n-1}\cdots s_{n-k}}{}
		\!\{s_1,\ldots, s_{n-r-2}\}
		\\
		&=
		S_{n-1}\cap\{s_1,\ldots, s_{n-r-2}\}
		\\
		&=
		S_{n-1}\cap S_{n-r-1}
		\\
		&=
		S_{n-r-1}
	\end{align*}
	since $s_{n-1}\cdots s_{n-k}$ commutes with $s_{1},\ldots,s_{n-r-2}$.
	And for $-1\leqslant r\leqslant (n-2)$ we have
	\begin{align*}
		S_{n-1}\cap\prescript{s_{n,n-r-1}}{}\!S_{n-r-1}
		&=
		S_{n-1}\cap\prescript{s_{n-1}\cdots s_{n-r-1}}{}
		\!\{s_1,\ldots, s_{n-r-2}\}
		\\
		&=
		S_{n-1}\cap\{s_1,\ldots, s_{n-r-3}, 
		s_{n-1}\cdots s_{n-r-2}\cdots s_{n-1}\}
		\\
		&=
		\{s_1,\ldots, s_{n-r-3}\}
		\\
		&=
		S_{n-r-2}
	\end{align*}
	since in this case $s_{n-1}\cdots s_{n-r-1}$ commutes with
	$s_1,\ldots,s_{n-r-3}$, but not with $s_{n-r-2}$.
	The Mackey decomposition~\eqref{equation-mackey-proof}
	now gives us the required result.
\end{proof}

\begin{lemma}\label{lemma-bases}
	Let $-1\leqslant r\leqslant (n-1)$.
	Then $(F_0)_r$ has basis 
	\[
		\{T_xT_{n,n}\otimes 1
		\mid x\in(X_{S_{n-r-1}}^{S_{n-1}})^{-1}\}
		\cup
		\{T_xT_{n,n-r-1}\otimes 1
		\mid x\in(X^{S_{n-1}}_{S_{n-r-2}})^{-1}\}
	\]
	with the first term omitted for $r=-1$ 
	and the last term omitted for $r=n-1$.
	And for $p\geqslant 1$, $(F_p/F_{p-1})_r$ has basis given by 
	\[
		\{T_xT_{n,n-p}\otimes 1
		\mid x\in (X_{S_{n-r-1}}^{S_{n-1}})^{-1}\}
	\]
	so long as $r\geqslant p$, with the basis being empty when $r<p$.
\end{lemma}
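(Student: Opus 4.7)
The plan is to refine the basis of $\D(n)_r = \H_n \otimes_{\H_{n-r-1}}\t$ furnished by Proposition~\ref{proposition-basis} by grouping its basis vectors according to their double coset in $\S_{n-1}\backslash\S_n/\S_{n-r-1}$. Concretely, Proposition~\ref{proposition-basis} provides the $R$-basis $\{T_w\otimes 1 \mid w \in X^{-1}_{S_{n-r-1}}\}$, and Lemma~\ref{lemma-mackey} partitions $X^{-1}_{S_{n-r-1}}$ into strata $(X^{S_{n-1}}_{J_d})^{-1}\cdot d$ indexed by the double coset representatives $d$ of Lemma~\ref{lemma-double-coset-reps}; the computations inside the proof of Lemma~\ref{lemma-mackey} identify $J_d = S_{n-1}\cap{}^d S_{n-r-1}$ as $S_{n-r-1}$ for $d=s_{n,n-t}$ with $0\leqslant t\leqslant r$, and as $S_{n-r-2}$ for the special representative $d=s_{n,n-r-1}$. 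Because lengths add in the Mackey product, $T_{xd}=T_x T_d$, so the stratum above $d$ contributes the $R$-submodule
\[
M_d \;=\; \mathrm{span}_R\{T_x T_d \otimes 1 \mid x\in (X^{S_{n-1}}_{J_d})^{-1}\},
\]
and $\D(n)_r = \bigoplus_d M_d$ as $R$-modules. Each of these summands will turn out to be the cyclic $\H_{n-1}$-submodule generated by the corresponding $T_d\otimes 1$, so the lemma will then follow by matching these summands against the generators listed in Definition~\ref{definition-filtration}.

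The central step will be showing that each $M_d$ is actually an $\H_{n-1}$-submodule, and is cyclic on $T_d\otimes 1$. For $\H_{n-1}$-invariance it suffices, after expanding $T_sT_x$ via the quadratic relation, to check that $T_yT_d\otimes 1\in M_d$ for arbitrary $y\in\S_{n-1}$. Writing $y=x'v$ with $x'\in(X^{S_{n-1}}_{J_d})^{-1}$, $v\in\S_{J_d}$, and lengths adding (by Theorem~\ref{theorem-distinguished}), the task reduces to showing $T_v T_d\otimes 1\in M_d$ for $v\in\S_{J_d}$. Now every such $v$ has the form $v=dud^{-1}$ for some $u\in\S_{n-r-1}$, and because $d$ is $(S_{n-1},S_{n-r-1})$-reduced one gets $\ell(vd)=\ell(v)+\ell(d)$ and $\ell(du)=\ell(d)+\ell(u)$ from Theorem~\ref{theorem-distinguished}, hence $\ell(v)=\ell(u)$ and
\[
T_vT_d \;=\; T_{vd} \;=\; T_{du} \;=\; T_dT_u,
\]
so $T_vT_d\otimes 1 = q^{\ell(u)}(T_d\otimes 1) \in M_d$. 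I expect this length-matching computation to be the main obstacle, since it uses all three components of the $(S_{n-1},S_{n-r-1})$-reduced hypothesis on $d$ simultaneously, and it is the only point in the argument where the quadratic relation and the distinction between left and right coset representatives both enter in a substantive way.

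The remainder is pure bookkeeping against Definition~\ref{definition-filtration}. $F_p$ is by definition the $\H_{n-1}$-submodule generated by $T_{n,n-t}\otimes 1$ for $0\leqslant t\leqslant\min(r,p)$ together (when $r\leqslant n-2$) with $T_{n,n-r-1}\otimes 1$, so $F_p=\bigoplus_{t=0}^{\min(r,p)} M_{s_{n,n-t}}\oplus M_{s_{n,n-r-1}}$ with the evident omissions at $r=-1$ and $r=n-1$ matching those in Lemma~\ref{lemma-mackey}. Setting $p=0$ reads off the stated basis for $(F_0)_r$. For $p\geqslant 1$, the quotient $F_p/F_{p-1}$ gains precisely the single new summand $M_{s_{n,n-p}}$ when $r\geqslant p$, whose basis is the stated one since $J_{s_{n,n-p}}=S_{n-r-1}$; when $r<p$ one has $p>\min(r,p)$, so $T_{n,n-p}\otimes 1$ is not a new generator in degree $r$ and the quotient is zero, in agreement with the lemma.
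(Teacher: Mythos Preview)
Your proof is correct and follows essentially the same route as the paper: decompose the Proposition~\ref{proposition-basis} basis of $\D(n)_r$ via Lemma~\ref{lemma-mackey}, note that lengths add so $T_{xd}=T_xT_d$, and then match the resulting strata against the generators in Definition~\ref{definition-filtration}. The paper's version is terser---it simply asserts ``Comparing with the definition of the filtration, we see that $(F_p)_r$ has basis given by\ldots''---whereas you explicitly verify that each stratum $M_d$ is an $\H_{n-1}$-submodule (and cyclic on $T_d\otimes 1$), which is precisely the step the paper leaves implicit; your length-matching argument $T_vT_d=T_{vd}=T_{du}=T_dT_u$ for $v\in\S_{J_d}$ is a clean way to handle this, and in fact in the cases at hand $d$ commutes with $\S_{J_d}$, so the computation simplifies further.
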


\begin{proof}
$\D(n)_r= \H_n\otimes_{\H_{n-r-1}}\t$ has basis 
$\{T_x\otimes 1\mid x\in (X_{S_{n-r-1}})^{-1}\}$.
By Lemma~\ref{lemma-mackey}, this is equal to
\begin{align*}
	\{T_y\otimes 1\mid y\in	(X_{S_{n-r-1}}^{S_{n-1}})^{-1}s_{n,n}\}
	\cup
	\cdots
	\cup
	\{T_y\otimes 1\mid &y\in (X_{S_{n-r-1}}^{S_{n-1}})^{-1}s_{n,n-r}\}
	\\
	\cup&
	\{T_y\otimes 1\mid y\in(X^{S_{n-1}}_{S_{n-r-2}})^{-1}s_{n,n-r-1}\}
\end{align*}
Here the initial union of $r+1$ terms is empty in the case $r=-1$,
while the final term is omitted in the case $r=(n-1)$.
Observe that if $T_y\otimes 1$ is an element of the first set in the union
above, then $y=xs_{n,n-r-1}$ for some $x\in (X^{S_{n-1}}_{S_{n-r-2}})^{-1}$.
Since $\ell(xs_{n,n-r-1}) = \ell(x)+\ell(s_{n,n-r-1})$ 
as in Theorem~\ref{theorem-distinguished}
we then have $T_y = T_xT_{s_{n,n-r-1}}=T_xT_{n,n-r-1}$.  
Similarly for the other sets in the union, so that the basis is given by
\begin{align*}
	\{T_xT_{n,n}\otimes 1\mid x\in(X_{S_{n-r-1}}^{S_{n-1}})^{-1}\}
	\cup
	\cdots
	\cup
	\{T_x&T_{n,n-r}\otimes 1\mid x\in (X_{S_{n-r-1}}^{S_{n-1}})^{-1}\}
	\\
	\cup
	&\{T_xT_{n,n-r-1}\otimes 1\mid x\in(X^{S_{n-1}}_{S_{n-r-2}})^{-1}\}.
\end{align*}
Here the initial union of $r+1$ terms is empty in the case $r=-1$,
while the final term is omitted in the case $r=(n-1)$.
Comparing with the definition of the filtration, we see that 
$(F_p)_r$ has basis given by the union of the first 
$\min(p,r)+1$ of these sets, together with the last (in the cases
that it is present).
In particular, $(F_0)_r$ has basis 
\[
	\{T_xT_{n,n}\otimes 1\mid x\in(X_{S_{n-r-1}}^{S_{n-1}})^{-1}\}
	\cup
	\{T_xT_{n,n-r-1}\otimes 1\mid x\in(X^{S_{n-1}}_{S_{n-r-2}})^{-1}\}
\]
with the first term omitted for $r=-1$ and the last term omitted for $r=n-1$.
And for $p\geqslant 1$, $(F_p/F_{p-1})_r$ has basis given by 
\[
	\{T_xT_{n,n-p}\otimes 1\mid x\in (X_{S_{n-r-1}}^{S_{n-1}})^{-1}\}
\]
so long as $r\geqslant p$, with the basis being empty when $r<p$.
This completes the proof.
\end{proof}

\begin{proof}[Proof of Theorem~\ref{theorem-filtration}]
\label{proof-filtration}

The last lemma established bases for $F_0$ and $F_p/F_{p-1}$ in each degree.
As we will see below, since $C(\D(n-1))$ and 
$\H_{n-1}\otimes_{\H_{n-p-1}}\Sigma^{p+1}\D(n-p-1)$
are defined using tensor products $\H_n\otimes_{\H_k}\t$,
one can write down bases for these in each degree directly
using Proposition~\ref{proposition-basis}.
Then proof that $\Phi$ and $\Psi$ are isomorphisms 
amounts to verifying that they induce bijections between these bases,
at least up to multiplication by powers of $q$.

The domain of $\Phi$ in degree $r$ is
\begin{align*}
	C(\D(n-1))_r
	&=
	\D(n-1)_r\oplus\D(n-1)_{r-1}
	\\
	&=
	(\H_{n-1}\otimes_{\H_{n-r-2}}\t)
	\oplus
	(\H_{n-1}\otimes_{\H_{n-r-1}}\t)
\end{align*}
and therefore has basis
\[
	\{(0,T_x\otimes 1)\mid x\in(X_{S_{n-r-1}}^{S_{n-1}})^{-1}\}
	\cup
	\{(T_x\otimes 1,0)\mid x\in(X_{S_{n-r-2}}^{S_{n-1}})^{-1}\},
\]
with the first term omitted when $r=-1$ and the second omitted
when $r=(n-1)$.
By the definition of $\Phi$ we have
\[
	\Phi_r(T_x\otimes 1,0)=q^{-r}\cdot T_xT_{n,n-r-1}\otimes 1
\]
and
\[
	\Phi_r(0,T_x\otimes 1)=q\cdot T_x\otimes 1 = q\cdot T_xT_{n,n}\otimes 1
\]
so that, up to scaling by powers of $q$,
$\Phi_r$ restricts to a bijection between the basis of $C(\D(n-1))_r$
and the basis of $(F_0)_r$ given in Lemma~\ref{lemma-bases},
so that $\Phi$ is an isomorphism.

Similarly, the domain of $\Psi$ in degree $(p+1)+r$ is 
\begin{align*}
	\H_{n-1}\otimes_{\H_{n-p-1}}\Sigma^{p+1} \D(n-p-1)_{(p+1)+r}
	&=
	\H_{n-1}\otimes_{\H_{n-p-1}}\D(n-p-1)_r
	\\
	&=
	\H_{n-1}\otimes_{\H_{n-p-1}}(\H_{n-p-1}\otimes_{\H_{n-p-r-2}}\t)
	\\
	&\cong
	\H_{n-1}\otimes_{\H_{n-p-r-2}}\t
\end{align*}
and therefore has basis
\[
	\{T_x\otimes (1\otimes 1)\mid x\in(X_{S_{n-p-r-2}}^{S_{n-1}})^{-1}\}
\]
The definition of $\Psi$ gives
\[
	\Psi_{(p+1)+r}(T_x\otimes (1\otimes 1))=T_xT_{n,n-p}\otimes 1
\]
so that $\Psi$ induces a bijection between 
this basis, and the basis of $(F_p/F_{p-1})_r$ 
given in Lemma~\ref{lemma-bases},
and therefore $\Psi$ is an isomorphism.
\end{proof}

\section{Obtaining the spectral sequence}\label{section-spectral-sequence}

The proof of Theorem~\ref{theorem-main} will consist of an intricate
but well-known spectral sequence argument.
Since this argument is not normally carried out in the context of algebras,
we take some time here to explain how the relevant spectral
sequence is constructed in this context.
The results are summarised in the following proposition,
which we will prove over the course of the section.

\begin{proposition}\label{proposition-spectral-sequence}
	There is a homological spectral sequence $\{E^r\}_{r\geqslant 1}$
	with the following properties:
	\begin{itemize}
		\item
		$E^1_{s,t}$ is concentrated in horizontal degrees
		$s\geqslant -1$.

		\item
		$E^1_{s,t} 
		= \tor_t^{\H_{n-s-1}}(\t,\t)$ 
		
		\item
		$d^1\colon E^1_{s,t}\to E^1_{s-1,t}$ 
		is the stabilisation map when $s$ is even,
		and vanishes when $s$ is odd.
		
		\item
		$E^\infty_{s,t}=0$ in total
		degrees $s+t\leqslant (n-2)$.
	\end{itemize}
	Similarly, there is a cohomological spectral sequence 
	$\{E^r\}_{r\geqslant 1}$ with the following properties:
	\begin{itemize}
		\item
		$E_1^{s,t}$ is concentrated in horizontal degrees
		$s\geqslant -1$.

		\item
		$E_1^{s,t} 
		= \ext^t_{\H_{n-s-1}}(\t,\t)$ 
		
		\item
		$d_1\colon E_1^{s,t}\to E_1^{s-1,t}$ 
		is the stabilisation map when $s$ is even,
		and vanishes when $s$ is odd.
		
		\item
		$E_\infty^{s,t}=0$ in total
		degrees $s+t\leqslant (n-2)$.
	\end{itemize}
\end{proposition}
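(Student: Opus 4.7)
The plan is to derive the spectral sequence from a double complex. Take $P_\bullet \to \t$ a projective resolution of $\t$ as a right $\H_n$-module, and form the bicomplex $C_{s,t} = P_t \otimes_{\H_n} \D(n)_s$. Since $\D(n)$ is bounded (with $-1 \leqslant s \leqslant n-1$), the bicomplex has finitely many columns and both filtration spectral sequences converge to $H_\ast(\mathrm{Tot}\,C)$.

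For the $E^1$-page with the column filtration, take vertical homology first. The cancellation
\[
P_\bullet \otimes_{\H_n} \D(n)_s \;=\; P_\bullet \otimes_{\H_n} \H_n \otimes_{\H_{n-s-1}} \t \;=\; P_\bullet \otimes_{\H_{n-s-1}} \t,
\]
combined with Proposition~\ref{proposition-basis} giving that $\H_n$ is free as a right $\H_{n-s-1}$-module, shows that $P_\bullet$ remains a projective resolution of $\t$ after restriction to $\H_{n-s-1}$, yielding $E^1_{s,t} = \tor_t^{\H_{n-s-1}}(\t,\t)$, concentrated in $s \geqslant -1$ because $\D(n)_s$ is. The crux is the identification of $d^1$. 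Factor each $\partial^s_j$ as the right-multiplication $R_j$ by $T_{n-s+j,n-s}$ on $P_\bullet \otimes_{\H_{n-s-1}} \t$, followed by the stabilization quotient to $P_\bullet \otimes_{\H_{n-s}} \t$. Since $T_{n-s+j,n-s}$ uses only generators $T_{n-s}, \ldots, T_{n-s+j-1}$ while $\H_{n-s-1}$ is generated by $T_1, \ldots, T_{n-s-2}$, the two commute, so $R_j \colon P_\bullet \to P_\bullet$ is a chain map of right $\H_{n-s-1}$-modules covering the endomorphism of $\t = H_0(P_\bullet)$ given by right multiplication by $T_{n-s+j,n-s}$, which equals $q^j$ (the element has $j$ letters, each acting as $q$ on $\t$). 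Since $q^j \cdot \mathrm{id}_{P_\bullet}$ is another chain lift of this endomorphism, uniqueness of projective lifts up to chain homotopy gives $(R_j)_\ast = q^j \cdot \mathrm{id}$ on tor, so $(\partial^s_j)_\ast = q^j \cdot \mathrm{stab}$, and
\[
d^1 \;=\; \sum_{j=0}^{s}(-1)^j q^{-j} (\partial^s_j)_\ast \;=\; \Bigl(\sum_{j=0}^{s}(-1)^j\Bigr) \cdot \mathrm{stab},
\]
which equals $\mathrm{stab}$ when $s$ is even and $0$ when $s$ is odd.

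For $E^\infty$, the other (row) filtration has $E^1$-term $P_t \otimes_{\H_n} H_s(\D(n))$ by flatness of the projective $P_t$, which vanishes for $s \leqslant n-2$ by Theorem~\ref{theorem-Dn-acyclicity}. Both spectral sequences converge to the same abutment, so $H_d(\mathrm{Tot}\,C) = 0$ for $d \leqslant n-2$, forcing $E^\infty_{s,t} = 0$ whenever $s+t \leqslant n-2$. The cohomological version runs dually: replace the bicomplex by $\hom_{\H_n}(\D(n)_s, I^t)$ for $I^\bullet$ an injective resolution of $\t$, invoke Shapiro's lemma $\ext^t_{\H_n}(\D(n)_s, \t) \cong \ext^t_{\H_{n-s-1}}(\t,\t)$, and observe that the $d_1$ computation dualises directly while exactness of $\hom(-, I^t)$ on the acyclic part of $\D(n)$ produces the required $E_\infty$-vanishing. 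The main obstacle is the chain-homotopy equivalence $R_j \simeq q^j \cdot \mathrm{id}$; the remainder is standard double complex machinery.
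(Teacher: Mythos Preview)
Your proof is correct and follows essentially the same approach as the paper: both construct the spectral sequences from the double complex $P_\ast\otimes_{\H_n}\D(n)_\ast$ (respectively $\hom_{\H_n}(\D(n)_\ast,I^\ast)$), use one filtration together with projectivity of $P_t$ and Theorem~\ref{theorem-Dn-acyclicity} to obtain the $E^\infty$-vanishing, use the other filtration together with the change-of-rings isomorphism (Shapiro's lemma, relying on Proposition~\ref{proposition-basis}) to identify $E^1$, and then identify $d^1$ by showing that right multiplication by $T_{n-s+j,n-s}$ on $P_\ast$ is an $\H_{n-s-1}$-linear chain map lifting multiplication by $q^j$ on $\t$, hence chain homotopic to $q^j\cdot\mathrm{id}$. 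The paper separates these last two steps into Lemmas~\ref{lemma-done} and~\ref{lemma-right-mult}, but the substance is identical to what you wrote.
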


Throughout the section we will need to fix a projective resolution
$P_\ast$ of $\t$ as a right $\H_n$-module, and an injective resolution
$I^\ast$ of $\t$ as a left $\H_n$-module.
It will be important to note that
by restricting the module structure to any $\H_{n-s-1}$,
$P_\ast$ can be regarded as a projective resolution
of $\t$ as a right $\H_{n-s-1}$-module, 
and $I^\ast$ can be regarded as an injective resolution
$I^\ast$ of $\t$ as a left $\H_{n-s-1}$-module.
This is possible because $\H_n$ 
is free as both a right and left $\H_{n-s-1}$-module (see Proposition~\ref{proposition-basis}),
so that restriction preserves injectivity and projectivity.

We now work towards a proof of the proposition.

\begin{lemma}\label{lemma-spectral-sequence-II}
	There is a homological spectral sequence $\{{}^{II}E^r\}$
	with the following properties:
	\begin{itemize}
		\item
		$\IIE^1_{s,t}$ is concentrated in horizontal degrees
		$s\geqslant -1$.

		\item
		${}^{II}E^1_{s,t} 
		= \tor_t^{\H_n}(\t,\D(n)_s)$
		
		\item
		$d^1\colon {}^{II}E^1_{s,t}\to {}^{II}E^1_{s-1,t}$ 
		is induced by $\partial^s\colon \D(n)_s\to \D(n)_{s-1}$.
		
		\item
		$\IIE^\infty_{s,t}=0$ in total
		degrees $s+t\leqslant (n-2)$.
	\end{itemize}
	Similarly, there is a cohomological spectral sequence $\{{}^{II}E_r\}$
	with the following properties:
	\begin{itemize}
		\item
		$\IIE_1^{s,t}$ is concentrated in horizontal degrees
		$s\geqslant -1$.

		\item
		$\IIE^{s,t}_1=\ext_{\H_n}^t(D(n)_s,\t)$
		
		\item
		$d^1\colon {}^{II}E_1^{s-1,t}\to {}^{II}E_1^{s,t}$ 
		is induced by $\partial^s\colon \D(n)_s\to \D(n)_{s-1}$.
		
		\item
		$\IIE_\infty^{s,t}=0$ in total
		degrees $s+t\leqslant (n-2)$.
	\end{itemize}
\end{lemma}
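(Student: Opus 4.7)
The plan is to construct the spectral sequence via the standard double complex / hyperhomology machinery, using $\D(n)$ and a resolution of $\t$. Fix a projective resolution $P_\ast \twoheadrightarrow \t$ of $\t$ as a right $\H_n$-module, and form the first-quadrant-style double complex
\[
C_{s,t} = P_t \otimes_{\H_n} \D(n)_s,
\]
with vertical differential induced by $P_\ast$ and horizontal differential induced by $\partial^s$. Since $\D(n)$ is concentrated in $-1 \leqslant s \leqslant n-1$, the double complex is horizontally bounded and both spectral sequences converge to the homology of the total complex.

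For the $II$ spectral sequence, I would take vertical homology first: because $P_\ast$ is a projective resolution of $\t$ as a right $\H_n$-module, one obtains $\IIE^1_{s,t} = \tor^{\H_n}_t(\t, \D(n)_s)$, with the $d^1$-differential induced by $\partial^s \colon \D(n)_s \to \D(n)_{s-1}$ by functoriality of $\tor$. Since $\D(n)_s = 0$ for $s < -1$, the spectral sequence is concentrated in $s \geqslant -1$, as required.

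For the abutment estimate, I would run the other ($I$) spectral sequence, taking horizontal homology first. The key point is that each $P_t$ is projective, hence flat, over $\H_n$, so $P_t \otimes_{\H_n} -$ is exact and therefore
\[
\IE^1_{s,t} = H_s(P_t \otimes_{\H_n} \D(n)_\bullet) \cong P_t \otimes_{\H_n} H_s(\D(n)).
\]
By Theorem~\ref{theorem-Dn-acyclicity}, $H_s(\D(n)) = 0$ for $s \leqslant n-2$, so $\IE^1_{s,t}$ is concentrated in $s \geqslant n-1$. Hence $\IE^\infty_{s,t}$ is also concentrated there, and the abutment vanishes in total degree $\leqslant n-2$. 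Since both spectral sequences converge to the same total homology, $\IIE^\infty_{s,t} = 0$ in total degree $\leqslant n-2$.

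The cohomological version is entirely parallel: fix an injective resolution $\t \hookrightarrow I^\ast$ as a left $\H_n$-module, and form the double complex $\hom_{\H_n}(\D(n)_s, I^t)$. Taking vertical cohomology first yields $\IIE_1^{s,t} = \ext^t_{\H_n}(\D(n)_s, \t)$ with $d_1$ induced by $\partial^s$; taking horizontal cohomology first uses that each $I^t$ is injective, so $\hom_{\H_n}(-, I^t)$ is exact and the horizontal cohomology reduces to $\hom_{\H_n}(H_s(\D(n)), I^t)$, which vanishes for $s \leqslant n-2$ by Theorem~\ref{theorem-Dn-acyclicity}. The same convergence argument then gives the required vanishing in total degree $\leqslant n-2$. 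The only real subtlety I foresee is bookkeeping for the sign and direction conventions when passing between the chain complex $\D(n)$ and its dual under $\hom$, and confirming convergence of the spectral sequences, both of which follow from the horizontal boundedness of $\D(n)$.
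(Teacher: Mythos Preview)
Your proposal is correct and follows essentially the same approach as the paper: form the double complex $P_\ast\otimes_{\H_n}\D(n)_\ast$ (respectively $\hom_{\H_n}(\D(n)_\ast,I^\ast)$), and compare the two associated spectral sequences, using projectivity of $P_t$ (respectively injectivity of $I^t$) together with Theorem~\ref{theorem-Dn-acyclicity} to show that the other spectral sequence, and hence the total homology, vanishes in the required range. The only difference from the paper is a harmless swap of the roles of the indices $s$ and $t$ in the auxiliary $\IE$ spectral sequence.
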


\begin{proof}
	We prove the homological version first.
	Consider the (homological)
	double complex $P_\ast\otimes_{\H_n}\D(n)_\ast$.
	This double complex gives two spectral sequences, 
	$\{{}^IE^r\}$ and $\{{}^{II}E^r\}$, 
	obtained by filtering the totalization by rows or columns.
	In our case, the first spectral sequence has $E^1$ term 
	\[
		{}^IE^1_{s,t} = H_t(P_s\otimes_{\H_n}\D(n)_\ast)
	\]
	with $d^1\colon {}^IE^1_{s,t}\to {}^IE^1_{s-1,t}$ 
	induced by the differential $P_s\to P_{s-1}$.
	The second spectral sequence has $E^1$ term 
	\[
		{}^{II}E^1_{s,t} 
		= H_t(P_\ast\otimes_{\H_n}\D(n)_s)
		= \tor_t^{\H_n}(\t,\D(n)_s)
	\]
	and differential $d^1\colon {}^{II}E^1_{s,t}\to {}^{II}E^1_{s-1,t}$ 
	induced by $\partial^s\colon \D(n)_s\to \D(n)_{s-1}$. 
	Both spectral sequences converge to the homology 
	of the total complex $\mathrm{Tot}(P_\ast\otimes_{\H_n}\D(n)_\ast)$.  
	See section~5.6 of~\cite{Weibel} for details.

	The $E^1$-term of $\{{}^IE^r\}$ can be identified using the fact
	that $P_s$ is projective, so that the functor
	$(P_s\otimes_{\H_n}-)$ commutes with homology, giving us
	\[
		{}^IE^1_{s,t} 
		= H_t(P_s\otimes_{\H_n}\D(n)_\ast)
		\cong
		P_s\otimes_{\H_n} H_t(\D(n)_\ast).
	\]
	But by Theorem~\ref{theorem-Dn-acyclicity}, the right-hand-side vanishes
	for $t\leqslant (n-2)$.  
	In particular, ${}^IE^1_{\ast,\ast}$ vanishes in total degrees
	$\leqslant (n-2)$.  The same therefore holds for all subsequent
	pages of the spectral sequence, so that 
	$H_\ast(\mathrm{Tot}(P_\ast\otimes_{\H_n}\D(n)_\ast))$
	vanishes in degrees $\ast\leqslant (n-2)$.
	Since $\{{}^{II}E^r_{s,t}\}$ also converges to
	$H_\ast(\mathrm{Tot}(P_\ast\otimes_{\H_n}\D(n)_\ast))$,
	we obtain the conclusion.

	For the second case, we consider instead
	the (cohomological) double complex $\hom_{\H_n}(\D(n)_\ast,I^\ast)$.
	One obtains analogous spectral sequences $\{\IE_r\}$
	and $\{\IIE_r\}$, which are analysed in the same way as before.
	In the analysis of $\{\IE_r\}$ one uses 
	the fact that $I^\ast$ is injective and therefore
	$\hom_{\H_n}(-,I^s)$ commutes with homology to show that
	\[
		\IE_1^{s,t} 
		= H^t(\hom_{\H_n}(\D(n)_\ast,I^s))
		\cong
		\hom_{\H_n}(H_t(\D(n)),I^s).\qedhere
	\]
\end{proof}

Having obtained the spectral sequences $\{\IIE^r\}$ and $\{\IIE_r\}$,
we now proceed to turn them into the ones required by 
Proposition~\ref{proposition-spectral-sequence}.
Recall that 
\begin{gather*}
	\IIE^1_{s,t}
	=\tor_t^{\H_n}(\t,\D(n)_s)
	=\tor_t^{\H_n}(\t,\H_n\otimes_{\H_{n-s-1}}\t),
	\\
	\IIE_1^{s,t}
	=\ext^t_{\H_n}(\D(n)_s,\t)
	=\ext^t_{\H_n}(\H_n\otimes_{\H_{n-s-1}}\t,\t).
\end{gather*}
Recall from Proposition~\ref{proposition-basis}
that $\H_n$ is free as a right $\H_{n-s-1}$-module,
so that in particular $\H_n$ is flat as a right $\H_{n-s-1}$-module,
and there is therefore a change-of-rings isomorphism
\[
	\Xi_\ast\colon
	\tor_t^{\H_{n-s-1}}(\t,\t)
	\xrightarrow{\quad\cong\quad}
	\tor_t^{\H_n}(\t,\H_n\otimes_{\H_{n-s-1}}\t)
	=
	\tor_t^{\H_n}(\t,\D(n)_s)
	=
	E^1_{s,t}
\]
given on the level of chain complexes by the isomorphism
\[
	\Xi\colon
	P_\ast\otimes_{\H_{n-s-1}}\t
	\xrightarrow{\quad\cong\quad}
	P_\ast\otimes_{\H_n}(\H_n\otimes_{\H_{n-s-1}}\t),
	\qquad
	\Xi(p\otimes 1)
	=
	p\otimes (1\otimes 1),
\]
with inverse $\Xi^{-1}(p\otimes (h\otimes 1))= ph\otimes 1$.
And there is a change-of-rings isomorphism
\[
	\Xi^\ast\colon
	\ext^t_{\H_{n-s-1}}(\t,\t)
	\xrightarrow{\quad\cong\quad}
	\ext^t_{\H_n}(\H_n\otimes_{\H_{n-s-1}}\t,\t)
	=
	\ext^t_{\H_n}(\D(n)_s,\t)
	=
	E_1^{s,t}.
\]
given on the level of chain complexes by the isomorphism
\[
	\Xi\colon
	\hom_{\H_{n-s-1}}(\t,I^\ast)
	\xrightarrow{\quad\cong\quad}
	\hom_{\H_n}(\H_n\otimes_{\H_{n-s-1}}\t,I^\ast)
	\qquad
	\Xi(f)(h\otimes 1)
	=
	h\cdot f(1)
\]
with inverse $\Xi^{-1}(g)(1) = g(1\otimes 1)$.

We now define $\{E^r\}$ to be simply the spectral sequence $\{\IIE^r\}$,
but with the $E^1$-term modified by replacing 
$\IIE^r_{s,t}=\tor^{\H_n}_{t}(\t,\D(n)_s)$ with
$\tor^{\H_{n-s-1}}_t(\t,\t)$ using the map $\Xi_\ast$, and then
taking the induced differentials.
And we define $\{E_r\}$ to be $\{\IIE_r\}$ but with $E_1$-term modified
by replacing 
$\IIE_r^{s,t}=\ext_{\H_n}^{t}(\D(n)_s,\t)$ with
$\ext_{\H_{n-s-1}}^t(\t,\t)$ using the map $\Xi^\ast$, and again
taking the induced differentials.
Then $\{E^r\}$ and $\{E_r\}$ have all the properties required by 
Proposition~\ref{proposition-spectral-sequence}, except for the
description of the differentials.

\begin{lemma}\label{lemma-done}
	The composites
	\begin{gather*}
		\Xi_\ast^{-1}\circ d^1\circ\Xi_\ast
		\colon
		\tor_\ast^{\H_{n-s-1}}(\t,\t)
		\longrightarrow
		\tor_\ast^{\H_{n-s}}(\t,\t),
		\\
		{\Xi^\ast}^{-1}\circ d_1\circ\Xi^\ast
		\colon
		\ext^\ast_{\H_{n-s}}(\t,\t)
		\longrightarrow
		\ext^\ast_{\H_{n-s-1}}(\t,\t)
	\end{gather*}
	vanish when $s$ is odd,
	and are given by the relevant stabilisation map
	when $s$ is even.
\end{lemma}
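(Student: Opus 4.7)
The plan is to identify the chain-level representative of $\Xi_\ast^{-1}\circ d^1\circ\Xi_\ast$ and then produce an explicit chain homotopy between it and $f\cdot\mathrm{stab}$, where $f=1$ when $s$ is even and $f=0$ when $s$ is odd. Writing $m=n-s$ and unwinding $\Xi$ using the formula for $\partial^s$, one finds that the chain map inducing $\Xi_\ast^{-1}\circ d^1\circ\Xi_\ast$ is
\[
\phi\colon P_\ast\otimes_{\H_{m-1}}\t\longrightarrow P_\ast\otimes_{\H_m}\t,\qquad p\otimes 1\longmapsto pA_s\otimes 1,
\]
where $A_s=\sum_{j=0}^s(-1)^jq^{-j}T_{n-s+j,\,n-s}\in\H_n$. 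Applying the augmentation $\epsilon\colon\H_n\to R$ given by $T_i\mapsto q$ yields $\epsilon(A_s)=\sum_{j=0}^s(-1)^jq^{-j}\cdot q^j=\sum_j(-1)^j$, which is precisely $f$. The stabilisation is represented on chains by $p\otimes 1\mapsto p\otimes 1$, so the task reduces to showing $\phi\simeq f\cdot\mathrm{stab}$.

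The structural key is that $A_s$ commutes with $\H_{m-1}$: every factor $T_{m+j,m}=T_{m+j-1}\cdots T_m$ uses only letters $T_k$ with $k\geqslant m$, while $\H_{m-1}$ is generated by $T_1,\ldots,T_{m-2}$, and these commute in $\H_n$ because the indices differ by at least two. To leverage this I would take $P_\ast$ to be the standard bar resolution of $\t$ as a right $\H_n$-module (legitimate because the induced map on Tor is independent of the resolution), which comes equipped with a right $\H_n$-linear contracting homotopy $\sigma\colon P_\ast\to P_{\ast+1}$ satisfying $\partial\sigma+\sigma\partial=\mathrm{id}-\eta\epsilon$. Then define
\[
H\colon P_r\otimes_{\H_{m-1}}\t\longrightarrow P_{r+1}\otimes_{\H_m}\t,\qquad H(p\otimes 1):=\sigma(p)(A_s-f)\otimes 1.
\]
For $\alpha\in\H_{m-1}$, right $\H_n$-linearity of $\sigma$, the identity $\alpha(A_s-f)=(A_s-f)\alpha$, and the inclusion $\H_{m-1}\subset\H_m$ (letting $\alpha$ slide across the target tensor product) combine to give $H(p\alpha\otimes 1)=\sigma(p)(A_s-f)\alpha\otimes 1=\epsilon(\alpha)H(p\otimes 1)=H(p\otimes\alpha\cdot 1)$, so $H$ is well-defined.

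A direct calculation then gives $\partial H+H\partial=(\phi-f\cdot\mathrm{stab})-E$, where $E$ is concentrated in degree zero with $E(p_0\otimes 1)=\epsilon(p_0)\eta(1)(A_s-f)\otimes 1$. Because $\epsilon(\eta(1)(A_s-f))=\epsilon(A_s)-f=0$, the element $\eta(1)(A_s-f)\in P_0$ lies in $\mathrm{im}(\partial\colon P_1\to P_0)$; choosing $z\in P_1$ with $\partial z=\eta(1)(A_s-f)$ and setting $H'(p_0\otimes 1):=\epsilon(p_0)\,z\otimes 1$ (and zero in higher degrees), one verifies $\partial H'+H'\partial=E$. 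Hence $H+H'$ chain-homotopes $\phi-f\cdot\mathrm{stab}$ to zero, proving the homological statement. The cohomological claim follows by the dual argument: replace $P_\ast$ with a bar coresolution of $\t$ as a left $\H_n$-module carrying a left $\H_n$-linear contracting cohomotopy, and define the analogous homotopy by $g\mapsto\sigma((A_s-f)g(1))$. The main obstacle is recognising that a one-sided projective (respectively injective) resolution is essential: without right (respectively left) $\H_n$-linearity of $\sigma$, the homotopy $H$ would fail to descend from $P_\ast\otimes_{\H_{m-1}}\t$ to $P_\ast\otimes_{\H_m}\t$, and the construction would not even be well-defined.
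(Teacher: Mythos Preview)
Your chain-level identification of the composite as $p\otimes 1\mapsto pA_s\otimes 1$, the augmentation computation $\epsilon(A_s)=f$, and the observation that $A_s$ commutes with $\H_{m-1}$ are all correct and are exactly the ingredients needed.  The gap is in the construction of the homotopy: the bar resolution of $\t$ as a right $\H_n$-module does \emph{not} admit a right $\H_n$-linear contracting homotopy $\sigma$.  On $P_0$ your relation reads $\partial\sigma_0=\mathrm{id}-\eta\epsilon$; if $\sigma_0$ and $\partial$ are right $\H_n$-linear then so is $\eta\epsilon$, which forces $\eta(1)\cdot a=\epsilon(a)\,\eta(1)$ for all $a\in\H_n$.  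Combined with $\epsilon\eta=\mathrm{id}_\t$ this makes $\eta\epsilon$ an $\H_n$-linear idempotent on $P_0$ with image isomorphic to $\t$, so $\t$ would be a summand of the projective module $P_0$ and hence itself projective.  That fails whenever $\H_n$ is not semisimple (e.g.\ $q=1$ with $\mathrm{char}(R)\mid n!$, or $q$ a root of unity over $\bbC$), so the argument cannot work in the stated generality.  The same obstruction rules out even right $\H_{m-1}$-linearity of $\sigma$, which is the minimum you need for $H$ to be well-defined.

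The paper's argument sidesteps this by never writing an explicit homotopy.  Right multiplication by $T_{n-s+j,n-s}$ is a chain endomorphism of $P_\ast$ that is right $\H_{m-1}$-linear (this is precisely your commutation observation) and induces multiplication by $q^j$ on $H_0(P_\ast)=\t$; scalar multiplication by $q^j$ is another such chain map.  Since $P_\ast$ is a projective resolution of $\t$ over $\H_{m-1}$, the comparison theorem supplies an $\H_{m-1}$-linear chain homotopy between them, which then descends to $P_\ast\otimes_{\H_{m-1}}\t$ and composes with the reduction to $P_\ast\otimes_{\H_m}\t$.  Summing over $j$ with weights $(-1)^jq^{-j}$ gives $\phi\simeq f\cdot\mathrm{stab}$.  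In short: use the abstract existence of the homotopy over the smaller algebra rather than trying to manufacture one that is linear over the big algebra.
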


\begin{proof}
	Recall that $d^1$ is induced by the differential of $\D(n)$,
	so that it is given on the level of chains by the map
	\begin{gather*}
		\mathrm{id}\otimes\partial^r
		\colon P_\ast\otimes_{\H_n}\D(n)_s
		\longrightarrow
		P_\ast\otimes_{\H_n}\D(n)_{s-1},
		\\
		p\otimes (h\otimes 1)
		\longmapsto
		\sum_{j=0}^s(-1)^jq^{-j}
		(p\otimes (hT_{n-s+j,n-s}\otimes 1)).
	\end{gather*}
	Thus $\Xi_\ast^{-1}\circ d^1\circ\Xi_\ast$ 
	is given on the level of chains by the composite
	\[
		P_\ast\otimes_{\H_{n-s-1}}\t
		\xrightarrow{\Xi}
		P_\ast\otimes_{\H_n}\D(n)_s
		\xrightarrow{\mathrm{id}\otimes\partial_s}
		P_\ast\otimes_{\H_n}\D(n)_{s-1}
		\xrightarrow{\Xi^{-1}}
		P_\ast\otimes_{\H_{n-s}}\t,
	\]
	whose effect on the element $p\otimes 1$ is	
	\begin{align*}
		p\otimes 1
		&\mapsto
		p\otimes(1\otimes 1)
		\\
		&\mapsto 
		\sum_{j=0}^s(-1)^j q^{-j}(p\otimes (T_{n-s+j,n-s}\otimes 1))
		\\
		&\mapsto 
		\sum_{j=0}^s(-1)^j q^{-j}(pT_{n-s+j,n-s}\otimes 1).
	\end{align*}
	By Lemma~\ref{lemma-right-mult} below, this composite is chain
	homotopic to the map
	\begin{gather*}
		P_\ast\otimes_{\H_{n-s-1}}\t
		\longrightarrow
		P_\ast\otimes_{\H_{n-s}}\t
		\\
		p\otimes 1
		\mapsto
		\sum_{j=0}^s(-1)^j q^{-j}q^j(p\otimes 1)
		=
		\sum_{j=0}^s(-1)^j (p\otimes 1)
		=
		\begin{cases}
			p\otimes 1 & s\text{ even}
			\\
			0 & s\text{ odd}
		\end{cases}
	\end{gather*}
	and the result follows in the homological case.
	In the cohomological case the proof is similar, and we leave
	the details to the reader.
\end{proof}

\begin{lemma}\label{lemma-right-mult}
	The map $P_\ast\otimes_{\H_{n-s-1}}\t\to P_\ast\otimes_{\H_{n-s-1}}\t$,
	$p\otimes 1\mapsto pT_{n-s+j,n-s}\otimes 1$ is chain homotopic
	to the map given by multiplication by $q^j$.
	Consequently, the map
	$P_\ast\otimes_{\H_{n-s-1}}\t\to P_\ast\otimes_{\H_{n-s}}\t$,
	$p\otimes 1\mapsto pT_{n-s+j,n-s}\otimes 1$ is chain homotopic
	to the reduction map $P_\ast\otimes_{\H_{n-s-1}}\t
	\to P_\ast\otimes_{\H_{n-s}}\t$ multiplied by $q^j$.

	Analogously, the map 
	$\hom_{\H_{n-s-1}}(\t,I^\ast)\to \hom_{\H_{n-s-1}}(\t,I^\ast)$
	defined by
	$f\mapsto (1\mapsto T_{n-s+j,n-s}\cdot f(1))$ is chain homotopic
	to the map given by multiplication by $q^j$.
	Consequently, the map
	$\hom_{\H_{n-s}}(\t,I^\ast)\to \hom_{\H_{n-s-1}}(\t,I^\ast)$,
	$f\mapsto (1\mapsto T_{n-s+j,n-s}\cdot f(1))$ is chain homotopic
	to the restriction map $\hom_{\H_{n-s}}(\t,I^\ast)
	\to \hom_{\H_{n-s-1}}(\t,I^\ast)$ multiplied by $q^j$.
\end{lemma}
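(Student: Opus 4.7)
The plan is to produce a chain homotopy at the level of the full resolutions $P_\ast$ and $I^\ast$ and then push it down to the tensor/hom complexes. The key observation, and really the only nontrivial input, will be that $T_{n-s+j,n-s} = T_{n-s+j-1}\cdots T_{n-s}$ involves only generators $T_k$ with $k\ge n-s$, while $\H_{n-s-1}$ is generated by $T_1,\ldots,T_{n-s-2}$. Since $|k-l|>1$ for every such pair, each generator appearing in $T_{n-s+j,n-s}$ commutes with each generator of $\H_{n-s-1}$, and hence $T_{n-s+j,n-s}$ centralises all of $\H_{n-s-1}$ inside $\H_n$.

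For the homological statement, I would consider the endomorphism $R\colon P_\ast\to P_\ast$ given by $R(p)=p\cdot T_{n-s+j,n-s}$. The commutation above ensures that $R$ is a chain map \emph{of right $\H_{n-s-1}$-modules}. By Proposition~\ref{proposition-basis}, $\H_n$ is free as a right $\H_{n-s-1}$-module, so $P_\ast$ remains a projective resolution of $\t$ when restricted to $\H_{n-s-1}$. Both $R$ and $q^j\cdot\mathrm{id}_{P_\ast}$ are then chain maps of right $\H_{n-s-1}$-modules that induce the same map on $H_0(P_\ast)=\t$, namely multiplication by $q^j$ (for $R$ this is because $1\cdot T_{n-s+j,n-s}=q^j$ in $\t$). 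The comparison theorem for projective resolutions then supplies a chain homotopy between them, and applying $-\otimes_{\H_{n-s-1}}\t$ yields the first claim. The ``Consequently'' part follows by postcomposing that homotopy with the natural reduction $P_\ast\otimes_{\H_{n-s-1}}\t\to P_\ast\otimes_{\H_{n-s}}\t$.

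The cohomological half I would handle by the formally dual argument: left multiplication by $T_{n-s+j,n-s}$ defines a chain map $L\colon I^\ast\to I^\ast$ of left $\H_{n-s-1}$-modules (by the same centralizer observation), $I^\ast$ is an injective resolution of $\t$ over $\H_{n-s-1}$ (using that $\H_n$ is also free as a left $\H_{n-s-1}$-module, so restriction preserves injectives), and both $L$ and $q^j\cdot\mathrm{id}$ induce multiplication by $q^j$ on $H^0(I^\ast)=\t$. The dual comparison theorem then yields a chain homotopy, and applying $\hom_{\H_{n-s-1}}(\t,-)$ delivers the statement. I expect no real obstacle: once the centralizer observation is in hand, everything else is formal homological algebra, and the only thing to watch for is being careful to work consistently with \emph{right} $\H_{n-s-1}$-modules in the first half and \emph{left} $\H_{n-s-1}$-modules in the second.
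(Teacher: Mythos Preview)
Your proposal is correct and follows essentially the same route as the paper: the paper likewise observes that $T_{n-s+j,n-s}$ commutes with $\H_{n-s-1}$, notes that right (resp.\ left) multiplication by it is an $\H_{n-s-1}$-linear chain map on $P_\ast$ (resp.\ $I^\ast$) inducing multiplication by $q^j$ on $\t$, and then invokes uniqueness-up-to-homotopy of lifts through a projective (resp.\ injective) resolution. Your version is slightly more explicit about applying $-\otimes_{\H_{n-s-1}}\t$ and $\hom_{\H_{n-s-1}}(\t,-)$ at the end, but the argument is the same.
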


\begin{proof}
	Let us begin with the homological case.
	The next paragraph will show that 
	right-multiplication by $T_{n-s+j,n-s}$ on $P_\ast$ is a map
	of $\H_{n-s-1}$-modules, and that its effect on homology
	is multiplication by $q^j$.
	Another chain map with the
	same properties is multiplication by $q^j$.
	But since $P_\ast$ is a projective resolution by $\H_{n-s-1}$-modules,
	these two maps are chain homotopic.
	
	To see that right-multiplication on $P_\ast$ 
	by $T_{n-s+j,n-s}$ is a map of $\H_{n-s-1}$-modules,
	observe that 
	$T_{n-s+j,n-s}=T_{n-s+j-1}\cdots T_{n-s}$ commutes with  $\H_{n-s-1}$.
	The effect of the map on homology is
	the map $\t\to\t$ that is again 
	given by right multiplication by $T_{n-s+j,n-s}$,
	and since $T_{n-s+j,n-s}$ is a product of $j$ factors $T_k$,
	this is multiplication by $q^j$.

	The proof in the cohomological case is similar,
	left-multiplication by $T_{n-s+j,n-s}$ on $I^\ast$ is a map
	of $\H_{n-s-1}$-modules given on cohomology by multiplication
	by $q^j$, and since $I^\ast$ an injective resolution by
	$\H_{n-s-1}$-modules, this map is chain homotopic to
	multiplication by $q^j$.
\end{proof}

\section{Proof of Theorem~\ref{theorem-main}}\label{section-argument}

We are now able to prove Theorem~\ref{theorem-main},
in the homological case.
The following argument
is essentially what appears in section~5.2 of~\cite{RandalWilliamsConfig},
or in the proof of Theorem~2 of~\cite{Kerz},
except for changes in indexing and notation.

We prove that $\tor_d^{\H_{n-1}}(\t,\t)\to \tor_d^{\H_{n}}(\t,\t)$
is an isomorphism in degrees $d$ satisfying $2d\leqslant n-1$.
We do this by induction on $n$. 
The cases $n=1$ and $n=2$ only make a statement about degree $d=0$
and therefore hold trivially.

Suppose now that $n\geqslant 3$ and that the induction hypothesis
holds for all smaller values of $n$.
In the spectral sequence $\{E^r\}_{r\geqslant 1}$
of Proposition~\ref{proposition-spectral-sequence} the differential
\[
	d^1\colon E^1_{s,t}\to E^1_{s-1,t}
\]
is the stabilisation map
\[
	\tor^{\H_{n-s-1}}_t(\t,\t)\to
	\tor^{\H_{n-s}}_t(\t,\t)
\]
when $s$ is even, and vanishes when $s$ is odd.
In particular, our aim is to show that the maps 
$d^1\colon E^1_{0,t}\to E^1_{-1,t}$ are isomorphisms for $2t\leqslant n-1$,
or in other words that $E^2_{0,t}=0$ and $E^2_{-1,t}=0$
for $2t\leqslant n-1$.

Now let $u\geqslant 1$ and consider the differential
\[
	d^1\colon E^1_{2u,t}\to E^1_{2u-1,t}.
\]
Since this is the stabilisation map, our induction hypothesis states that
it is an isomorphism for $2t\leqslant n-2u-1$. 
This gives the first property below.  The second property follows
easily from it.
\begin{enumerate}
	\item
	For $r\geqslant 2$,
	$E^r_{\ast,\ast}$ vanishes in bidegrees
	$(2u,t)$ and $(2u-1,t)$ for $u\geqslant 1$, $2t\leqslant n-2u-1$.

	\item
	For $r\geqslant 2$,
	$E^r_{\ast,\ast}$ vanishes in bidegrees
	$(s,t)$ satisfying $2t\leqslant n-s-2$ and $s\geqslant 1$.
\end{enumerate}

We now claim that for $r\geqslant 2$ there are no differentials
$d^r$ affecting terms in bidegrees $(-1,t)$ and $(0,t)$
for $2t\leqslant n-1$.
In the case of bidegrees $(-1,t)$,
observe that a $d^r$ landing there 
must originate in bidegree $(-1+r,t-r+1)$, 
but that $E^r_{-1+r,t-r+1}=0$ by property (2) above.
In the case of bidegrees $(0,t)$ and $r\geqslant 3$, 
the same reasoning applies.
In the case of bidegrees $(0,t)$ and $r=2$, the differential
$d^2$ landing there must originate
in $(2,t-1)$, which is $(2u,t-1)$ for $u=1$, and $E^2_{2u,t-1}=0$
by property (1) above.

It follows that if $2t\leqslant n-1$ then
$E^\infty_{-1,t}=E^2_{-1,t}$ 
and $E^\infty_{0,t}=E^2_{0,t}$.
These terms lie in total degrees $d$ satisfying $d\leqslant (n-2)$ 
(this requires our assumption that $n\geqslant 3$).
But by Proposition~\ref{proposition-spectral-sequence}
we know that $E^{\infty}$ vanishes in these total degrees,
so that these terms vanish, and this completes the proof
in the homological case.

The proof in the cohomological case is entirely similar.

\newcommand{\etalchar}[1]{$^{#1}$}
 \newcommand{\noop}[1]{}

\end{document}